\documentclass{article}


\usepackage[authoryear]{natbib}
\PassOptionsToPackage{authoryear,open={((},close={))}}{natbib}
\usepackage[final]{neurips_2022}




\usepackage[T1]{fontenc}    
\usepackage[utf8]{inputenc} 

\usepackage[active]{srcltx}
\usepackage{float}
\usepackage{amsmath}
\usepackage{amsthm}
\usepackage{amssymb}

\PassOptionsToPackage{normalem}{ulem}
\usepackage{ulem}


\makeatletter

\floatstyle{ruled}
\newfloat{algorithm}{tbp}{loa}
\providecommand{\algorithmname}{Algorithm}
\floatname{algorithm}{\protect\algorithmname}

\theoremstyle{plain}
\newtheorem{thm}{\protect\theoremname}
\theoremstyle{remark}
\newtheorem{rem}[thm]{\protect\remarkname}
\theoremstyle{plain}
\newtheorem{cor}[thm]{\protect\corollaryname}
\theoremstyle{plain}
\newtheorem{prop}[thm]{\protect\propositionname}
\theoremstyle{plain}
\newtheorem{lem}[thm]{\protect\lemmaname}


\usepackage{hyperref}       
\usepackage{url}            
\usepackage{booktabs}       
\usepackage{amsfonts}       
\usepackage{nicefrac}       
\usepackage{microtype}      
\usepackage{xcolor}         

\usepackage{enumitem}
\usepackage{breqn}
\usepackage{bbm} 
\usepackage{algorithm,algorithmicx,algpseudocode,caption}


\makeatletter

\makeatother

\DeclareMathOperator{\Tr}{Tr}

\allowdisplaybreaks

\global\long\def\s[#1]{\textnormal{\scriptsize #1}}
\global\long\def\st[#1]{\textnormal{\tiny #1}}

\global\long\def\eqd{\stackrel{d}{=}}

\global\long\def\Dkl{\mathrm{D_{KL}}}

\global\long\def\Dchis{\chi^2}
\global\long\def\dtv{\mathrm{d_{{TV}}}}

\global\long\def\loss{\mathsf{loss}}

\global\long\def\M{\mathsf{M}}

\global\long\def\P{\mathbb{P}}
\global\long\def\E{\mathbb{E}}
\global\long\def\V{\mathbb{V}}

\global\long\def\I{\mathbbm{1}}
\global\long\def\T{\top} 
\global\long\def\v[#1]{\mathbf{#1}} 
\global\long\def\m[#1]{\boldsymbol{#1}} 

\global\long\def\r[#1]{#1}
\global\long\def\d{\mathrm{d}}

\global\long\def\dfn{:=}

\global\long\def\trre[#1,#2]{\overset{{\scriptstyle (#2)}}{#1}} 

\makeatother

\usepackage{babel}
\providecommand{\corollaryname}{Corollary}
\providecommand{\lemmaname}{Lemma}
\providecommand{\propositionname}{Proposition}
\providecommand{\remarkname}{Remark}
\providecommand{\theoremname}{Theorem}

\renewcommand\[{\begin{equation}}
\renewcommand\]{\end{equation}}


\title{Mean Estimation in High-Dimensional Binary Markov Gaussian Mixture Models}

%

\author{%
  Yihan Zhang \\ 
  Institute of Science and Technology, Austria \\
  \texttt{zephyr.z798@gmail.com} \\
  \And
  Nir Weinberger \\
  Technion - Israel Institute of Technology \\
  \texttt{nirwein@technion.ac.il} \\
}

\begin{document}

\maketitle

\begin{abstract}
We consider a high-dimensional mean estimation problem over
a binary hidden Markov model, which illuminates the interplay between
memory in data, sample size, dimension, and signal strength in statistical
inference. In this model, an estimator observes $n$ samples of a
$d$-dimensional parameter vector $\theta_{*}\in\mathbb{R}^{d}$,
multiplied by a random sign $ S_i $ ($1\le i\le n$), and corrupted
by isotropic standard Gaussian noise. The sequence of signs $\{S_{i}\}_{i\in[n]}\in\{-1,1\}^{n}$
is drawn from a stationary homogeneous Markov chain with flip probability
$\delta\in[0,1/2]$. As $\delta$ varies, this model smoothly interpolates
two well-studied models: the Gaussian Location Model for which $\delta=0$
and the Gaussian Mixture Model for which $\delta=1/2$. Assuming that
the estimator knows $\delta$, we establish a nearly minimax optimal
(up to logarithmic factors) estimation error rate, as a function of
$\|\theta_{*}\|,\delta,d,n$. We then provide an upper bound to
the case of estimating $\delta$, assuming a (possibly inaccurate)
knowledge of $\theta_{*}$. The bound is proved to be tight when $\theta_{*}$
is an accurately known constant. These results are then combined to
an algorithm which estimates $\theta_{*}$ with $\delta$ unknown
a priori, and theoretical guarantees on its error are stated. 
\end{abstract}




\section{Introduction}

Memory between data samples is ubiquitous in practical applications, as data collected from networks or sampled time series inevitably inherit spatial or temporal statistical dependencies. Numerous examples arise in imaging, meteorology, health care, finance, social science, and so on \citep{glaeser1996crime,bertrand2000network,sacerdote2001peer,duflo2003role,christakis2013social}. In principle, prior knowledge of the existence of such memory can be used to improve the performance of statistical estimators compared to the performance obtained in memoryless models. Development and analysis of statistical inference algorithms for models with memory has been extensively explored from the algorithmic perspective, and computationally efficient algorithms such as Baum-Welch and message-passing were developed \citep{ephraim2002hidden, van2008hidden, MacKay-book, wain-jordan-graphical}. It was also extensively studied from the theoretical perspective in the classical, fixed-dimensional and asymptotic regime, e.g., \citet[Chapter 27 and references therein]{gyorfi-book}. However, much less is understood about the high dimensional, non-asymptotic regime, which is of paramount importance in modern applications, and the focus of current extensive research \citep{vershynin2018high, wainwright2019high}. As we exemplify in this paper, the error in such estimation problems depends in an intricate way on the interplay between the number of samples, the dimension of the vector parameters to be estimated, the noise level (signal-to-noise ratio), and the level of memory between the samples.

An ever popular and fundamental model is the Gaussian mixture model, in which memory exists between samples whenever the latent variables determining the component of each sample are dependent. Numerous recent papers, e.g., 
\citet{balakrishnan2017statistical}, 
\citet{xu2016global}, 
\citet{klusowski2016statistical}, 
\citet{jin2016local}, 
\citet{dwivedi2018singularity}, 
\citet{dwivedi2018theoretical}, 
\citet{dwivedi2020sharp}, 
\citet{zhao2018statistical}, 
\citet{yan2017convergence}, 
\citet{weinberger2021algorithm} 
have focused on high-dimensional \emph{memoryless} models, and analyzed computationally efficient estimation algorithms, most notably the expectation maximization (EM) algorithm. Specifically, the seminal \citet{balakrishnan2017statistical} has provided theoretical guarantees for EM in memoryless models, though without proving minimax optimality, which was subsequently established in \citet{wu2019EM}. As a notable exception, \citet{yang2015statistical} has generalized the analysis of \citet{balakrishnan2017statistical} to a \emph{hidden Markov model}  (HMM) that has memory, yet again, without determining how the minimax error rate depends on the number of samples, dimension, noise level, and the amount of memory. In this paper, we address the question of precise characterization of the minimax error rate in terms of these parameters, in the context of a high-dimensional Gaussian HMM. We next turn to formally define this model and the estimation problem, describe known results, and then present our contributions. Our obtained results illuminate the opportunities and challenges associated with optimal inference in high dimensional models with memory.

\subsection{Problem formulation }

Let $S_{0}^{n}\dfn(S_{0},S_{1},\ldots,S_{n})$ be the following homogeneous
binary symmetric Markov chain, $S_{i}\in\{-1,1\}$, $\P[S_{0}=1]=1/2$
and 
\begin{equation}
S_{i}=\begin{cases}
S_{i-1}, & \text{w.p. }1-\delta\\
-S_{i-1}, & \text{w.p. }\delta
\end{cases}\label{eq: Markov sign model}
\end{equation}
for $i\in[n]\dfn\{1,\ldots,n\}$, and where $\delta\in[0,1]$ is the flip
probability of the binary Markov chain. We also denote $\rho\dfn1-2\delta\in[-1,1]$
which is the correlation between adjacent samples $\rho=\E[S_{i}S_{i+1}]$.
At each time point $i\in[n]$, a sample of a $d$-dimensional Gaussian
mixture model is observed
\begin{equation}
X_{i}=S_{i}\theta_{*}+Z_{i},\label{eq: Gaussian Markov model}
\end{equation}
where $Z_{i}\sim N(0,I_{d})$ is an i.i.d. sequence, independent
of $S_{0}^{n}$, and where $\theta_{*}\in\mathbb{R}^{d}$, $d\geq1$.
At its two extremes, this model degenerates to one of two fundamental
models, which are well studied. When $\delta=0$, the memory length is infinite, and the
sign $S_{0}=S_{1}\cdots=\cdots=S_{n}$ is fixed. Thus, up to
this sign ambiguity, the model (\ref{eq: Gaussian Markov model})
is the standard \emph{Gaussian location model} (GLM), which is essentially
a memoryless model (and exactly so if $S_{0}$ is known). When $\delta=\frac{1}{2}$,
the signs $S_{0}^{n}$ are i.i.d. and have no memory at all.
The model (\ref{eq: Gaussian Markov model}) is then a \emph{Gaussian mixture  model} (GMM) with two symmetric components, which is also a memoryless model.
In all other cases, $0<\delta<\frac{1}{2}$ (or $\frac{1}{2}<\delta<1$),
the model is a simple version of a HMM. 

The inference problem we consider in this paper is the estimation
of $\theta_{*}\in\mathbb{R}^{d}$, under the loss function 
\begin{equation}
\loss(\hat{\theta},\theta_{*})\dfn\min\{\|\hat{\theta}-\theta_{*}\|,\|\hat{\theta}+\theta_{*}\|\},\label{eq: loss function for means}
\end{equation}
that is, the Euclidean distance error under a possible sign ambiguity.\footnote{Similar bounds can be derived for the squared loss by trivial extensions. }
An intermediate goal (or an additional problem) is to estimate $\delta$,
under the regular absolute error loss function $|\hat{\delta}-\delta|$. 
The fundamental limits of this estimation problem will be gauged by the 
\emph{local} minimax rate, which is the maximal decrease rate of the
loss possible for any estimator, given $n$ samples, at dimension
$d\geq2$, for signal strength $\|\theta_{*}\|=t$, and under flip
probability $\delta$. Specifically, for $d\geq2$ it is defined as
\[
\M(n,d,\delta,t)\dfn\inf_{\hat{\theta}(X_{1}^{n})}\sup_{\|\theta_{*}\|=t}\E\left[\loss(\theta_{*},\hat{\theta}(X_{1}^{n}))\right].
\]
For general $d\geq1$, the \emph{global} minimax rate is defined with
the condition $\|\theta_{*}\|=t$  replaced by $\|\theta_{*}\|\leq t$ (this condition trivializes the estimator for $d=1$). 

\subsection{Known minimax estimation errors rates for GLM and GMM}

Before delving into models with memory ($0<\delta<1/2$), we review known results on the minimax rates in memoryless high dimensional Gaussian models -- the GLM ($\delta=0$) and the GMM ($\delta=1/2$). We refer the reader to note that the minimax error rate in these models may undergo two possible phase transitions -- one as $t$ increases,
and the other one as $d$ increases. We also remark that the regime of interest is that of low-separation ($t\lesssim1$), in which accurately detecting the components is impossible, yet parameter estimation with vanishing loss is possible.

At the first extreme, the local minimax rate for the GLM ($\delta=0$)
is the usual parametric error rate 
\begin{equation}
\M_{\text{GLM}}(n,d,t)\dfn\M(n,d,0,t)\asymp\begin{cases}
t, & t\leq\sqrt{\frac{d}{n}}\\
\sqrt{\frac{d}{n}}, & t\geq\sqrt{\frac{d}{n}}
\end{cases}.\label{eq: minimax rates -- Gaussian location model}
\end{equation}
This is achieved by the trivial estimator $\hat{\theta}=0$ if $t\leq\sqrt{\frac{d}{n}}$
and the simple empirical average estimator $\hat{\theta}=\frac{1}{n}\sum_{i=1}^{n}X_{i}$
if $t\geq\sqrt{\frac{d}{n}}$ (see \citep[Sec.\ III-9]{wu2017lecture}). The rate $\Theta(\sqrt{\frac{d}{n}})$
is then the \emph{global} minimax rate, i.e., the largest error over $t=\|\theta_{*}\|>0$.
This model does not have a phase transition with dimension. At the
other extreme, the GMM model ($\delta=\frac{1}{2}$) undergoes a phase
transition at the dimension $d=n$. At low dimension, $d\leq n$,
the minimax rate was neatly shown in \citep[Appendix B]{wu2019EM}
to be
\begin{equation}
\M_{\text{GMM}}(n,d,t)\equiv\M\left(n,d,\frac{1}{2},t\right)\asymp\begin{cases}
t, & t\leq\left(\frac{d}{n}\right)^{1/4}\\
\frac{1}{t}\sqrt{\frac{d}{n}}, & \left(\frac{d}{n}\right)^{1/4}\leq t\leq1\\
\sqrt{\frac{d}{n}}, & t>1
\end{cases},\label{eq: minimax rates Gaussian mixture low dimension}
\end{equation}
whereas at high dimension $d\geq n$, it is as for the Gaussian location
model in (\ref{eq: minimax rates -- Gaussian location model}), i.e.,
$\M_{\text{GMM}}(n,d,t)=\M_{\text{GLM}}(n,d,t)$. Hence, 
at this high dimensional regime, the loss does not vanishes by increasing the signal strength $t$, or by increasing the number of samples $n$. The loss in (\ref{eq: minimax rates Gaussian mixture low dimension})
is achieved by the trivial estimator $\hat{\theta}=0$ if $t\leq\left(\frac{d}{n}\right)^{1/4}$
and by an estimator given by a properly scaled and shifted principal component
of the empirical covariance matrix of $X_{1}^{n}\dfn(X_{1},X_{2},\ldots,X_{n})$,
if $t\geq\left(\frac{d}{n}\right)^{1/4}$. For the GMM at low dimension,
$d\leq n$, the global minimax rate is $(\frac{d}{n})^{1/4}$, which
is worse than the minimax rate of the Gaussian location model $\sqrt{\frac{d}{n}}.$ 

Therefore, the GMM has worse estimation performance compared to the
GLM from three aspects: First, at low dimension, $d\leq n$, it has
a larger global minimax rate $(\frac{d}{n})^{1/4}$ compared to the
parametric error rate of the GLM, $\sqrt{\frac{d}{n}}$; Second, at
low dimension, $d\leq n$, parametric error rate is achieved only
for constant separation $t\geq1$; Third, the transition to the high
dimension regime occurs at $d=n$. 

As is intuitively appealing from a ``data-processing'' reasoning,
a Markov model with flip probability $\delta'$ should allow for lower
estimation error of $\theta_{*}$ compared to a Markov model with
$\delta>\delta'$. Indeed, and as a specific simple example, any Markov
model with $\delta<\frac{1}{2}$ can be easily transformed to a GMM
model by randomizing the signs of each of the samples by an independent
Rademacher variable. Thus we may deduce, e.g., that
since at high dimension ($d\geq n$) the GLM and the GMM have the
same minimax rates, the minimax rates for $d\geq n$ are in fact as
in (\ref{eq: minimax rates -- Gaussian location model}) for \emph{any} $\delta\in[0,1]$.
We thus henceforth exclusively focus on the regime $d\leq n$.
As we show, it is generally true that the improvement in
estimation error when $\delta$ is reduced is less profound as the dimension
increases. 

\subsection{Contributions} \label{sec: contributions}

We first consider the case in which $\delta$ is known to the estimator
of $\theta_{*}$. For this case, we analyze the loss of an estimator that is based on a computation of the principal
component of a properly chosen empirical covariance matrix. We show (Theorem \ref{thm: mean estimation known delta upper bound})
that, at low dimension, $d\leq\delta n$, it achieves a local minimax rate of 
\begin{equation}
\M(n,d,\delta,t)\lesssim\begin{cases}
t, & t\leq\left(\frac{\delta d}{n}\right)^{1/4}\\
\frac{1}{t}\sqrt{\frac{\delta d}{n}}, & \left(\frac{\delta d}{n}\right)^{1/4}\leq t\leq\sqrt{\delta}\\
\sqrt{\frac{d}{n}}, & t\geq\sqrt{\delta}
\end{cases}, \label{eq: minimax loss at low dimension Markov contribution statement}
\end{equation}
and at high dimension, $d\geq\delta n$, it is as for the Gaussian location
model in (\ref{eq: minimax rates -- Gaussian location model}). The
rate of this estimator is then further shown to be asymptotically
optimal (up to a logarithmic factor) via a minimax lower bound (Theorem \ref{thm: impossibility lower bound for mean estimation}).
Evidently, the loss  in (\ref{eq: minimax loss at low dimension Markov contribution statement}) smoothly interpolates the rates of the GLM in (\ref{eq: minimax rates -- Gaussian location model})
and the GMM in (\ref{eq: minimax rates Gaussian mixture low dimension}). Moreover, it is evident that the loss is improved with the decrease of
$\delta$ from all three aspects previously mentioned. First, at low dimension, $d\leq\delta n$, the global minimax rate is $\left(\frac{\delta d}{n}\right)^{1/4}$ (obtained by equating the first and second cases in (\ref{eq: minimax loss at low dimension Markov contribution statement})). Hence, reducing the flip probability from $\delta=\Theta(1)$ to $\delta=\frac{d}{n}$ smoothly reduces the global minimax rate from the parametric error rate $\Theta(\sqrt{\frac{d}{n}})$ of the GLM to the $ \Theta( \left(\frac{d}{n}\right)^{1/4}) $ of the GMM. 
Second, at low dimension, $d\leq\delta n$, the minimal signal strength required to obtain parametric error rate is $t = \Theta(\sqrt{\delta})$ (obtained from the third case in (\ref{eq: minimax loss at low dimension Markov contribution statement})). This, again, improves by decreasing $\delta$, and matches the extremes $t = \Theta(\sqrt{\frac{d}{n}})$ of GLM and $t = \Theta(1)$ of GMM.
Third, the transition to the high dimension regime occurs at $d=\delta n$, which is again better with lower $\delta$, and matches the transition point of the GMM given by $d=n$. This lower transition
point allows us to achieve the error rate of the GLM for any signal strength,
even in a regime in which the loss $O(\sqrt{\frac{d}{n}})$ vanishes with $n\to\infty$ (unlike
for GMM, $\delta=\frac{1}{2}$); specifically, this occurs whenever
$d\geq \delta n$ yet $d=o(n)$. Beyond the formal proof, Appendix \ref{app:heuristic} provides a heuristic justification 
for why the minimax error is naturally expected to scale as in \eqref{eq: minimax loss at low dimension Markov contribution statement}. 

Second, as a step towards the removal of the assumption that $\delta$
is known to the estimator, we consider the complementary problem of
estimating $\delta$ whenever an estimate $\theta_{\sharp}$ of $\theta_{*}$
is available (which can be either exact $\theta_{\sharp}=\theta_{*}$,
or inaccurate $\theta_{\sharp}\neq\theta_{*}$). We propose a simple estimator for $\delta$, and analyze
its error in case of a mismatch (Theorem \ref{thm: estimation error of delta for mismatched delta}).
We then specify this result to the matched case $\theta_{\sharp}=\theta_{*}$
and show that in the non-trivial regime ($\|\theta_{*}\|\lesssim1$)
its error rate is $\tilde{O}(\frac{1}{\|\theta_{*}\|^{2}}\sqrt{\frac{1}{n}})$.
We then proceed to show an impossibility lower bound of $\Omega(\sqrt{\frac{1}{n}})$
(Proposition \ref{prop: impossibility lower bound for flip probability estimation})
for this error rate. The precise dependence of the estimation error rate of $\delta$ on $\|\theta_*\|$ is therefore not precisely determined, and we discuss the challenges in settling this matter.  

Third, we consider the case in which the estimator of $\theta_{*}$
has no prior knowledge of $\delta$. We propose a three-step algorithm
for this case (Algorithm \ref{alg:Mean estimation with unknown delta}).
First, a (possibly) gross estimate $\hat{\theta}^{(A)}$of $\theta_{*}$
is computed based on third of the samples, assuming the worst case
of $\delta=\frac{1}{2}$. Then, an estimate $\hat{\delta}^{(B)}$
of $\delta$ is computed using another third of the samples, assuming
the estimate $\hat{\theta}^{(A)}$. Finally, a refined estimate $ \hat{\theta}^{(C)} $ of
$\theta_{*}$ is obtained by (essentially) assuming that $\delta$
is $\hat{\delta}^{(B)}$. At each of the steps above, the algorithm
may stop and decide to return its current estimate when it determines
that no further improvement is possible by moving on to the next steps.
We analyze the loss of this algorithm (Theorem \ref{thm: Estimation error for algorithm}),
and show that this algorithm is capable of partially achieving the
gains associated with the case of known $\delta$. 

Our technique of partitioning the samples to blocks according to the dependence structure of the Markov chain renders the possibility of understanding the minimax rate of more general models, such as mixtures with multiple components \citep{doss2020location} and memory, Ising models \citep{daskalakis-dependent-regression}, Boltzmann machines \citep{bresler2019boltzmann}, Markov random fields, etc. Our findings in the mean estimation setting also stand in contrast to the \emph{data wastage} phenomenon observed in the linear regression setting in the prior work \citet{bresler-markov-linear-regression}.

\subsection{Additional related work}

Both the GLM \citep{johnstone2002function,tsybakov2008introduction}
and GMM \citep{lindsay1995mixture,mclachlan2019finite} are classic
models which were well-explored from numerous perspectives. 
In the last few years, there is a
surge of interest in the non-asymptotic performance analysis of computationally
efficient estimation algorithms for this estimation task. For example,
\citet{moitra2010settling,kalai2010efficiently,anandkumar2014tensor,hardt2015tight,wu2020optimal}
have analyzed method-of-moments-based algorithms, and various other papers
considered the EM algorithm \citep{balakrishnan2017statistical,xu2016global,jin2016local,klusowski2016statistical,weinberger2021algorithm,dwivedi2018singularity,dwivedi2020sharp,dwivedi2018theoretical,zhao2018statistical,yan2017convergence}.
Specifically, the local minimax rate for GMM in (\ref{eq: minimax rates Gaussian mixture low dimension})
was determined in \citep{wu2019EM} as a benchmark for the operation
of the EM algorithm. 

The model (\ref{eq: Gaussian Markov model}) is a simple instance
of a HMM \citep{ephraim2002hidden,van2008hidden} in high dimensions.
Parameter estimation in such models is practically performed via the
Baum-Welch algorithm \citep{baum1970maximization}, which is a computationally
efficient version of EM for HMMs. 
To the best of our knowledge, there were hardly any attempts to characterize
the minimax rates in such models, with the exception of \citep{aiylam2018parameter} and \citep{yang2015statistical}, previously mentioned. In \citep{aiylam2018parameter}, a local version of the Baum-Welch algorithm was proposed, and vanishing error of the convergence of the estimate to the true parameter was established for both a population version as well as a finite-sample version. In \citep{yang2015statistical}, general bounds on the performance of Baum-Welch algorithm were specified to
the Gaussian model with Markov signs (\ref{eq: Gaussian Markov model})
considered here. It was shown that the Baum-Welch algorithm achieves a parametric error rate, and
converges in a finite number of iterations \citep[Corollary 2]{yang2015statistical}. 
However, the qualifying
condition for the estimation error bound of \citep{yang2015statistical} is that $t=\|\theta_{*}\|\gtrsim\log\frac{1}{1-(1-2\delta)^{2}}$,
that is, a non-trivial separation when $\delta$ is constant, which
further blows up as $\delta\downarrow0$. By contrast, in this paper,
our goal is to characterize the estimation error in the regime of
$\delta$ and $t=\|\theta_{*}\|$ in which the minimax rate is
affected by these parameters, and this requires analyzing vanishing
$\delta$ and $t$. 

More broadly, there is a growing interest in advancing the quantitative understanding of the performance of statistical learning and inference with dependent data. 
\citet{bresler-markov-linear-regression} studied linear regression with Markovian covariates and characterized the minimax error rate in terms of the mixing time of the Markov chain. 
A stochastic gradient descent-style algorithm adapted to the Markov setting was shown to be minimax optimal. 
Statistical estimation problems including linear and logistic regression with more general network dependencies among response variables were studied by \citet{daskalakis-dependent-regression} and \citet{estimation-dependent}. 
Learnability and generalization bounds were derived by \citet{learning-weak-dependent} for dependent data satisfying the so-called Dobrushin's condition. 

\subsection{Notation conventions}

For a vector $v\in\mathbb{R}^{d}$, $\|v\|$ is the Euclidean norm.
For a
positivedefinite matrix $A$, $\lambda_{\max}(A)$ and $v_{\max}(A)$
are the maximal eigenvalue and the associated eigenvector (of unit
norm) of $A$. Unless
otherwise stated, the constants involved in Bachmann-Landau notation
are numerical, and do not depend on the 
parameters $(n,d,\delta,t)$. 
It holds that $ a\gtrsim b $ (resp.\ $ a\lesssim b $) if there exists a constant $ c>0 $ (resp.\ $ C>0 $) such that $ a\ge cb $ (resp.\ $ a\le Cb $). 
If $ a\lesssim b $ and $ a\gtrsim b $. then $ a\asymp b $. 
Integer constraints (ceiling and floor) on large quantities that do not affect the results are omitted for brevity. 
For a real numbers $ a,b $ the shorthand notation $ a\vee b \dfn \max\{a,b\}, a\wedge b \dfn \min\{a,b\} $  
and $ (a)_+ \dfn a\vee0 $ is used.
A sequence of objects $ X_1,\cdots,X_n $ is denoted by $ X_1^n $. 
Expectation, variance and probability are denoted by $ \mathbb{E},\mathbb{V}$ and $\mathbb{P} $, respectively. 
Equality in distribution of random variables $ X $ and $ Y $ is denoted by $ X\eqd Y $. All logarithms $\log$ are to the base $e$.

\section{Mean estimation for a known flip probability\label{sec:Mean-estimation-for}}

In this section, we consider the problem of estimating $\theta_{*}$
whenever $\delta$ is exactly known to the estimator. In that case,
it may be assumed w.l.o.g. that $\delta\in[0,\frac{1}{2}]$, as otherwise
one may negate each of the even samples to obtain an equivalent model
with $\delta$ replaced with $1-\delta$. Hence also $\rho\in[0,1]$.
We next describe an estimator for this task, state a bound on its
performance, and then show that it matches (up to a logarithmic factor) an impossibility lower bound. 

The estimator operationally interpolates and therefore simultaneously generalizes the empirical average estimator \eqref{eqn:glm-estimator} and the (properly scaled) principal component estimator \eqref{eqn:gmm-estimator} analyzed in \citep[Appendix B]{wu2019EM}. 
It degenerates to the latter estimators if $ \delta\downarrow0 $ or $ \delta\uparrow\frac{1}{2} $. 
Specifically, the estimator partitions the sample into blocks of equal length $k$ each (which will later be set to $ k = \frac{1}{8\delta} $, according to the mixing time of the Markov chain $S_0^n$).
Let $\ell$ denote the number of blocks respectively, so that
$k\ell=n$. Let ${\cal I}_{i}=\{(i-1)k+1,(i-1)k+2,\cdots,ik\}$ denote
the indices of the $i$th block. Further, let $\{R_{i}\}_{i\in[\ell]}$
be an i.i.d. Rademacher sequence ($R_{i}\sim\text{Uniform}\{-1,1\}$),
and let 
\[\overline{X}_{i}\dfn R_{i}\cdot\frac{1}{k}\sum_{j\in{\cal I}_{i}}X_{j}=\overline{S}_{i}\theta_{*}+\overline{Z}_{i}\]
denote the average of the samples in the $i$th block (randomized
with a sign $R_{i}$), where 
\[\overline{S}_{i}\dfn R_i\cdot\frac{1}{k}\sum_{j\in{\cal I}_{i}}S_{j}\]
is the \textit{gain} (average of the signs) of the $i$th block, and 
\[\overline{Z}_{i}\dfn R_i\cdot\frac{1}{k}\sum_{j\in{\cal I}_{i}}Z_{j}\sim N\left(0,\frac{1}{k}\cdot I_{d}\right)\]
is the average noise of the $i$th block. Due to the sign randomization,
it holds that $\{\overline{S}_{i}\}_{i\in[\ell]}$ is an i.i.d. sequence.
Since $\{\overline{Z}_{i}\}_{i\in[\ell]}$ is also an i.i.d. sequence,
then so is $\{\overline{X}_{i}\}_{i\in[\ell]}$. For notational simplicity
we will omit the block index $i$ of a generic block. For block length
$k$, we denote by
\[\xi_{k}\dfn\E[\overline{S}^{2}]=\E\left[\left(\frac{1}{k}\sum_{j=1}^{k}S_{j}\right)^{2}\right]\]
the second moment of the gain $\overline{S}$. 
Note that $ \xi_k\in[\frac{1}{k},1] $ for any $ \delta\in[0,\frac{1}{2}]$, and, in particular, it is always positive. 
For a sequence of samples
$X_{1}^{n}=(X_{1},\ldots,X_{n})$, we define by $\hat{\Sigma}_{n,k}(X_{1}^{n})$
the empirical covariance matrix of the averaged samples over blocks
$\{\overline{X}_{i}\}_{i\in[\ell]}$, that is 
\[\hat{\Sigma}_{n,k}(X_{1}^{n})\dfn\frac{1}{\ell}\sum_{i=1}^{\ell}\overline{X}_{i}\overline{X}_{i}^{\T},\]
whose population average is $\Sigma_{n,k}(\theta_{*})$, where
\[\Sigma_{n,k}(\theta_*)\dfn\E[\overline{X}\,\overline{X}^{\T}]=\xi_{k}\theta_*\theta_*^{\T}+\frac{1}{k}I_{d}.\]
We note that $\theta$ is the principal component of $\Sigma_{n,k}(\theta)$,
that is, $\lambda_{\text{max}}(\Sigma_{n,k}(\theta))=\xi_{k}\|\theta\|^{2}+\frac{1}{k}$
and the corresponding eigenvector is $v_{\text{max}}(\Sigma_{n,k}(\theta))=\theta$.
We thus consider the following estimator for $\theta_{*}$, from a
sequence $X_{1}^{n}$, and with a block length of $k$ 
\begin{equation}
\hat{\theta}_{\text{cov}}(X_{1}^{n};k)\dfn\sqrt{\frac{1}{\xi_{k}}\left(\lambda_{\max}(\hat{\Sigma}_{n,k}(X_{1}^{n}))-\frac{1}{k}\right)_{+}}\cdot v_{\max}\left(\hat{\Sigma}_{n,k}(X_{1}^{n})\right).\label{eq: general PCA estimation rule}
\end{equation}
The estimator is thus constructed from two types of averages: First,
a coherent average of the samples at each block, to obtain $\ell$
block-samples $\overline{X}_{i}$ with gain $\overline{S}_{i}$ and 
noise variance reduced by a factor of $k$. Second, an incoherent average
of the ``square'' of the $\ell$ block-samples $\overline{X}_{i}\overline{X}_{i}^{\T}$,
which resolves the remaining sign ambiguity between blocks. This balance two extreme cases: If $\delta=0$, then
this reduces the problem to the GLM (with a sign
ambiguity) and $k=n$ is an optimal choice. If $\delta=\frac{1}{2}$,
then this reduces the problem to the GMM, in which coherent averaging is non-beneficial and $k=1$ is rate optimal. Generally, the optimal choice of the block length $k$ is proportional to the mixing
time of the Markov chain $\Theta(\frac{1}{\delta})$. This choice assures that the random gain $\overline{S}$ is $\pm1$ with a (constant)
high probability. In fact, an elementary, yet crucial, part of the analysis establishes that the random gain $\bar{S}$ has constant variance for this choice of block length
(see Lemma \ref{lem:analysis of the random gain} in Appendix \ref{app:proof-achievability-estimate-theta}).
On the other hand, if $k=\Omega(\frac{1}{\delta})$,
then the random gain $\overline{S}$ will not be $\pm1$ (or not even bounded
away from zero) with high probability, and such choice is never efficient.
Specifically, we consider the estimator in (\ref{eq: general PCA estimation rule})
with $k=\frac{1}{8\delta}$. 
The above estimation procedure is depicted in Figure \ref{fig:block} in Appendix \ref{app:proof-achievability-estimate-theta}.

Let us denote 
\[\beta(n,d,\delta)\dfn\sqrt{\frac{d}{n}}\vee\left(\frac{\delta d}{n}\right)^{1/4},\]
which will actually be the global minimax rate.
\begin{thm}
\label{thm: mean estimation known delta upper bound}Assume that $\delta\geq\frac{1}{n}$
and $d\leq n$, and set $\hat{\theta}\equiv\hat{\theta}_{\text{\emph{cov}}}(X_{1}^{n};k)$
with $k=\frac{1}{8\delta}$. Then, there exist numerical constants
$c_{0},c_{1},c_{2}>0$ such that for every $ \theta_*\in\mathbb{R}^d $
\[
\E\left[\loss(\hat{\theta},\theta_{*})\right]\leq c_{0}\cdot\begin{cases}
\beta(n,d,\delta), & \|\theta_{*}\|\leq\beta(n,d,\delta)\\
\sqrt{\frac{d}{n}}+\frac{1}{\|\theta_{*}\|}\sqrt{\frac{\delta d}{n}}+\frac{1}{\|\theta_{*}\|}\cdot\frac{d}{n}, & \beta(n,d,\delta)\leq\|\theta_{*}\|
\end{cases}
\label{eqn:cov-estimator-rate}
\]
and 
\[\loss(\hat{\theta},\theta_{*})\leq c_{1}\cdot\log(n)\cdot\E\left[\loss(\hat{\theta},\theta_{*})\right]\]
with probability larger than $1-\frac{c_{2}}{n}$. 
\end{thm}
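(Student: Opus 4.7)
The plan is to treat $\hat\theta_{\text{cov}}$ as a spectral plug-in estimator and reduce everything to (i) concentration of the empirical block-covariance matrix around its population mean, and (ii) a matrix-perturbation translation of this concentration into error on the rescaled top eigenvector. Since the block-averaged samples $\overline X_i = \overline S_i \theta_* + \overline Z_i$ are i.i.d.\ (thanks to the Rademacher randomization $R_i$ combined with stationarity of the chain), expanding $\overline X_i\overline X_i^\T$ and subtracting its mean yields the decomposition
\[
E \;\dfn\; \hat\Sigma_{n,k}(X_1^n) - \Sigma_{n,k}(\theta_*) \;=\; A_1\,\theta_*\theta_*^\T \;+\; \bigl(\theta_* a^\T + a\,\theta_*^\T\bigr) \;+\; B,
\]
where $A_1 \dfn \ell^{-1}\sum_i(\overline S_i^2 - \xi_k)$, $a \dfn \ell^{-1}\sum_i \overline S_i \overline Z_i$, and $B \dfn \ell^{-1}\sum_i \overline Z_i \overline Z_i^\T - (1/k) I_d$. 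The key structural observation is that the first summand is rank-one and collinear with $\theta_*$, so it only shifts the top eigenvalue of $\hat\Sigma$ without tilting its top eigenvector; the eigenvector perturbation is driven entirely by the cross term and $B$.

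For $k = 1/(8\delta)$, the gain lemma (Lemma~\ref{lem:analysis of the random gain}) yields $\xi_k \asymp 1$, so $\Sigma_{n,k}(\theta_*) = \xi_k\theta_*\theta_*^\T + (1/k) I_d$ has eigengap $\xi_k\|\theta_*\|^2 \asymp \|\theta_*\|^2$. I then control each summand of $E$ by standard concentration: Hoeffding for the bounded i.i.d.\ summands of $A_1$ gives $|A_1|\lesssim \sqrt{\log n/(\delta n)}$; conditioning on the $\overline S_i$'s, the vector $a$ is Gaussian with covariance $\preceq (\xi_k/n) I_d$, so Gaussian norm concentration gives $\|a\|\lesssim \sqrt{d\log n/n}$; and standard sample-covariance concentration for i.i.d.\ $\overline Z_i \sim N(0, I_d/k)$ gives $\|B\|_{\text{op}}\lesssim k^{-1}(\sqrt{d/\ell}+d/\ell) \asymp \sqrt{\delta d/n}+d/n$, all with failure probability $O(1/n)$ after a union bound and absorbing the logarithmic factors into the $\log(n)$ in the high-probability conclusion.

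For the spectral translation, I apply the Davis--Kahan $\sin\Theta$ theorem to $\tilde\Sigma \dfn (\xi_k + A_1)\theta_*\theta_*^\T + (1/k) I_d$ versus $\hat\Sigma = \tilde\Sigma + (\theta_* a^\T + a\theta_*^\T) + B$: since $\tilde\Sigma$ has the same top eigenvector as $\Sigma_{n,k}(\theta_*)$, the effective perturbation norm is $\lesssim \|\theta_*\|\|a\| + \|B\|$ and the gap is still $\asymp \|\theta_*\|^2$, yielding $\sin\Theta(\hat v,\pm v)\lesssim \|a\|/\|\theta_*\| + \|B\|/\|\theta_*\|^2$. Weyl on the full $E$ controls the eigenvalue, $|\lambda_{\max}(\hat\Sigma)-\lambda_{\max}(\Sigma)|\lesssim |A_1|\|\theta_*\|^2+\|\theta_*\|\|a\|+\|B\|$, and via $\hat\alpha^2-\|\theta_*\|^2=(\lambda_{\max}(\hat\Sigma)-\lambda_{\max}(\Sigma))/\xi_k$ this yields a bound on $|\hat\alpha-\|\theta_*\||$ after dividing by $\hat\alpha+\|\theta_*\|\asymp\|\theta_*\|$ in the strong-signal regime. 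Writing $\hat\theta=\hat\alpha\hat v$ and choosing the sign aligning $\hat v$ with $v=\theta_*/\|\theta_*\|$, the Pythagorean identity
\[
\loss(\hat\theta,\theta_*)^2 \;=\; (\hat\alpha\cos\Theta-\|\theta_*\|)^2+\hat\alpha^2\sin^2\Theta \;\lesssim\; (\hat\alpha-\|\theta_*\|)^2+\|\theta_*\|^2\sin^2\Theta
\]
assembles the three terms in the strong-signal rate after substituting the bounds from the previous step.

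The main obstacle is exactly the separation described above: applying Davis--Kahan naively with the full $\|E\|_{\text{op}}$ would couple the $|A_1|\|\theta_*\|^2$ piece into the eigenvector bound and degrade the scaling. A secondary difficulty is the weak-signal regime $\|\theta_*\|\leq\beta$, where the gap vanishes and Davis--Kahan is useless; there I argue directly that $\lambda_{\max}(\hat\Sigma)-1/k\lesssim\beta^2$ w.h.p.\ (from $\lambda_{\max}(\hat\Sigma)\leq\lambda_{\max}(\Sigma)+\|E\|_{\text{op}}$ together with $\|\theta_*\|\leq\beta$), so that the $(\cdot)_+$ and square-root force $\hat\alpha\lesssim\beta$ and hence $\loss\leq\|\hat\theta\|+\|\theta_*\|\lesssim\beta$. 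The final step is promoting the high-probability bound to expectation, using a deterministic operator-norm control such as $\lambda_{\max}(\hat\Sigma)\leq\ell^{-1}\sum_i\|\overline X_i\|^2$ together with Gaussian tail integration to ensure that the $O(1/n)$ failure event contributes only a lower-order term to $\E[\loss]$.
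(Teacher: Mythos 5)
Your outline follows the paper's proof essentially step for step: the same decomposition of $\hat\Sigma_{n,k}-\Sigma_{n,k}$ into the gain-fluctuation term $A_1\theta_*\theta_*^\T$, the cross term and the noise-covariance term; the same concentration inputs (bounded-sum concentration for $A_1$ using the gain lemma, Gaussian-norm concentration for $a$, covariance concentration for $B$); Weyl for the eigenvalue and Davis--Kahan for the eigenvector; a direct bound on $\|\hat\theta\|$ in the regime $\|\theta_*\|\le\beta$; and a tail/expectation argument at the end. So there is no genuinely different route here, and most of the skeleton is sound.

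The gap is in the strong-signal assembly, which is exactly the delicate step. First, the claim you present as the "main obstacle" --- that applying Davis--Kahan with the full $\|E\|_{\text{op}}$ "would degrade the scaling," so $A_1\theta_*\theta_*^\T$ must be folded into the reference matrix --- mischaracterizes the situation: the paper uses the full perturbation in Davis--Kahan and loses nothing, because the $|A_1|\,\|\theta_*\|^2$ piece contributes $\|\theta_*\|\cdot |A_1|\,\|\theta_*\|^2/\|\theta_*\|^2=|A_1|\,\|\theta_*\|$ to the loss through the eigenvector, which is the same order as the contribution you keep anyway through Weyl and the norm estimate ($|\hat\alpha-\|\theta_*\||$ inevitably carries an $|A_1|\,\|\theta_*\|$ term for this estimator). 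Second, and more importantly, you never show that this term --- of order $\|\theta_*\|\sqrt{\log (n)/(\delta n)}$ when $k=\frac{1}{8\delta}$ --- is dominated by the claimed rate $\sqrt{\frac{d}{n}}+\frac{1}{\|\theta_*\|}\sqrt{\frac{\delta d}{n}}+\frac{1}{\|\theta_*\|}\cdot\frac{d}{n}$; simply "assembling the three terms" does not produce the third term $\frac{1}{\|\theta_*\|}\frac{d}{n}$ of the statement, whose sole purpose is to absorb $|A_1|\,\|\theta_*\|$. The paper spends a dedicated computation on precisely this point, rewriting $\sqrt{\tfrac{1}{\delta n}}\,\|\theta_*\|=\frac{\|\theta_*\|^{2}}{\|\theta_*\|}\sqrt{\tfrac{1}{\delta n}}$ and then invoking the relation between $\|\theta_*\|$ and $\beta(n,d,\delta)$ together with the hypothesis $\delta\ge\frac1n$ to bound it by a multiple of $\frac{1}{\|\theta_*\|}\frac{d}{n}$; your outline never uses the assumptions $\delta\ge\frac1n$ and $d\le n$ at all, so as written it cannot close this step. (The route you sketch for the expectation bound --- high-probability bound plus tail integration --- differs mildly from the paper, which bounds the expectation of each error term directly, but that difference is immaterial.)
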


Theorem \ref{thm: mean estimation known delta upper bound} is proved in Appendix \ref{app:proof-achievability-estimate-theta}. 
Evidently, Theorem \ref{thm: mean estimation known delta upper bound} implies that the upper bound on the minimax rate stated in (\ref{eq: minimax loss at low dimension Markov contribution statement})
above holds in low dimension, $d\leq\delta n$, whereas $\M(n,d,\delta,t)\asymp\M_{\text{GLM}}(n,d,t)$
holds in high dimension $d\geq\delta n$.\footnote{When $ \|\theta_*\|\le\beta(n,d,\delta) $, the estimator $ \hat{\theta}_{\text{cov}}(X_1^n;k) $ in Theorem \ref{thm: mean estimation known delta upper bound} only achieves a rate $ \beta(n,d,\delta) $ which is larger than the promised rate $ \|\theta_*\| $ in \eqref{eq: minimax loss at low dimension Markov contribution statement}. 
However, since the estimator is assumed to know $t$ (but not the direction of $ \theta_* $; a common formulation in high-dimensional statistics), then it can output the zero vector. 
It then incurs loss $ \|\theta\|_* $ for any $ \theta_*\in\mathbb{R}^d $, matching the promised rate when $ \|\theta_*\|\le\beta(n,d,\delta) $. 
To summarize, for any value of $t$, the minimax rate is achieved by the minimum rate of $ \hat{\theta}_{\text{cov}}(X_1^n;k) $ and $ \hat{\theta}_0(X_1^n) \equiv 0 $.
}
We remark that the condition $ \delta\ge\frac{1}{n} $ is mild as otherwise the model \eqref{eq: Gaussian Markov model} is essentially equivalent to GLM. 
See Remark \ref{rk:remark-delta-at-least-1-over-n} in Appendix \ref{app:proof-achievability-estimate-theta}. 
Numerical validation of the performance of the estimator $ \hat{\theta}_{\text{cov}}(X_1^n;k) $ is shown 
in Appendix \ref{app:numerical-validation}.

We next consider an impossibility result. As we have seen, at high
dimension, $d\geq\delta n$, the minimax error rates achieved are the
same as for the Gaussian location model, and thus clearly cannot be
improved. We thus next focus on the low dimensional regime $d\leq\delta n$. 
\begin{thm}
\label{thm: impossibility lower bound for mean estimation}Assume
that $2\le d\leq\delta n$ and $n\geq\frac{128}{d}$. 
Then the local minimax rate is bounded as
\[
\M(n,d,\delta,t)\gtrsim\frac{1}{\sqrt{\log(n)}}\cdot\begin{cases}
t, & t\leq\left(\frac{\delta d}{n}\right)^{1/4}\\
\frac{1}{t}\sqrt{\frac{\delta d}{n}}, & \left(\frac{\delta d}{n}\right)^{1/4}\leq t\leq\sqrt{\delta}\\
\sqrt{\frac{d}{n}}, & t\geq\sqrt{\delta}
\end{cases}.
\]
\end{thm}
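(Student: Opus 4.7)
The plan is to treat the three sub-ranges of $t$ separately via Le Cam's two-point method and its Bayesian (Ingster--Suslina) variant. All three calculations hinge on one clean identity. Letting $P_\theta^n$ denote the joint law of $X_1^n$ under \eqref{eq: Gaussian Markov model} with mean parameter $\theta$, and $P_0^n = N(0,I_d)^{\otimes n}$ the noise-only reference, a direct Gaussian integration yields
\[
\int \frac{P_\theta^n(x)\, P_{\theta'}^n(x)}{P_0^n(x)}\,\d x \;=\; \E\!\left[\exp\!\left(\langle \theta,\theta'\rangle \sum_{i=1}^n T_i\right)\right],
\]
where $T_i\dfn S_i\tilde S_i$ for two independent copies $S,\tilde S$ of the sign Markov chain. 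The structural observation is that $\{T_i\}$ is itself a symmetric binary Markov chain with flip probability $\delta'=2\delta(1-\delta)\asymp\delta$; hence $\sum_i T_i$ has mean zero, asymptotic variance $\Theta(n/\delta)$, and an MGF bounded by $\exp(cn\lambda^2/\delta)$ in an appropriate range of $\lambda$. This turns every $\chi^2$ (or Hellinger) computation for the full HMM into a scalar MGF question for a doubled Markov chain on $\{\pm1\}$.

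For the first regime $t\le(\delta d/n)^{1/4}$, I would take $\pi$ uniform on the sphere $\{\|\theta\|=t\}$ and test $\theta_{*}=0$ against $\theta_{*}\sim\pi$. Conditioning on $\tau=\sum_i T_i$ and using that $\langle\theta,\theta'\rangle$ is centered and sub-Gaussian with parameter $O(t^2/\sqrt d)$ for two uniform draws, one obtains $\E_{\theta,\theta'}[\exp(\langle\theta,\theta'\rangle\tau)]\le\exp(ct^4\tau^2/d)$. Linearizing the quadratic via $\exp(a\tau^2)=\E_G\exp(\sqrt{2a}\,G\tau)$ with $G\sim N(0,1)$ and invoking the Markov MGF bound yields $\chi^2=O(1)$ precisely when $t^4\lesssim\delta d/n$, producing the $\gtrsim t$ lower bound by Le Cam. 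For the third regime $t\ge\sqrt{\delta}$, I would reduce to the Gaussian location model by granting the estimator side information $S_1^n$; the resulting problem is GLM with i.i.d.\ $N(0,I_d)$ noise on $n$ samples, whose classical $\sqrt{d/n}$ lower bound is inherited.

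The intermediate regime $(\delta d/n)^{1/4}\le t\le\sqrt\delta$ is the main obstacle. Here I would test two parameters $\theta_0,\theta_1$ on the $t$-sphere at distance $\epsilon\asymp(1/t)\sqrt{\delta d/n}$ (or use Fano over an $\epsilon$-packing to import the dimensional factor). Because the $\chi^2$ between $P_{\theta_0}^n$ and $P_{\theta_1}^n$ does not admit the clean product form of the $\chi^2$ against $P_0^n$, I expect to pivot through $P_0^n$ by Cauchy--Schwarz --- or equivalently to work with the Hellinger distance, whose Bhattacharyya cross-term still factorizes through the doubled chain. After pivoting, $\langle\theta_0,\theta_1\rangle = t^2 - \Theta(\epsilon^2)$ splits the exponent into a nuisance piece $t^2\tau$ (symmetric in the two hypotheses, handled by truncating $\tau$) and a discriminating piece $\epsilon^2\tau$; matching the standard-deviation scale $\epsilon^2\sqrt{n/\delta}\lesssim 1$ recovers the claimed separation $\epsilon\asymp(1/t)\sqrt{\delta d/n}$ once the dimensional prior is restored.

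The $\sqrt{\log n}$ slack is most naturally traced to this truncation step: bounding $|\sum_i T_i|$ by its typical deviation $\sqrt{(n/\delta)\log n}$ on an event of probability $1-1/\mathrm{poly}(n)$ avoids the non-uniform behaviour of the Markov MGF outside its linear range, and this event-level slack propagates into the final rate as the $\sqrt{\log n}$ factor. Controlling these rare-but-large excursions of the doubled Markov chain is the technical crux; once the MGF tool is in place, the per-regime calculations are routine bookkeeping.
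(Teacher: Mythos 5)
Your doubled-chain identity $\int P_\theta^n P_{\theta'}^n / P_0^n\,\mathrm{d}x = \E\bigl[\exp\bigl(\langle\theta,\theta'\rangle\sum_{i=1}^n S_i\tilde S_i\bigr)\bigr]$ and the observation that $T_i = S_i\tilde S_i$ is again a symmetric Markov chain (flip probability $2\delta(1-\delta)$) are correct and elegant, and they would plausibly yield the first regime via the Ingster--Suslina second-moment method and the third via a sign-revealing genie. The gap is in the middle regime, which is the heart of the theorem. Your identity controls a $\chi^2$-type quantity against the \emph{null} $P_0^n$, but Fano (or Le Cam) over a packing of the $t$-sphere requires a divergence between two \emph{nonzero} alternatives, e.g.\ $\Dkl(P_{\theta_m}^n\mid\mid P_{\theta_0}^n)$, and I do not see a Cauchy--Schwarz pivot producing this from what you control: the obstruction is that $P_{\theta_0}^n$ now sits in the denominator, and being itself a sign mixture it does not factor through the two-replica expansion. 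Your fallback --- that the Bhattacharyya affinity $\int\sqrt{P_{\theta_0}^n P_{\theta_1}^n}$ ``still factorizes through the doubled chain'' --- is false: the square root of a sign mixture is not a sign mixture, so there is no two-replica representation of $\sqrt{P_\theta^n}$. As written, your middle-regime paragraph describes the answer you want rather than supplying an argument that produces it.

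The paper avoids this precisely by never computing divergences between HMM alternatives directly. It performs a genie-aided block reduction with block length $k = \log(n)/\delta$: the genie reveals within-block sign flips, collapsing the model to $\ell = n/k$ block-averaged samples whose signs form a Markov chain with flip probability $\bar\delta = (1-\rho^k)/2 \approx 1/2$ and whose noise variance is $1/k$. This choice of $k$ makes the joint law of the $\ell$ blocks multiplicatively within $1\pm O(1/n)$ of the i.i.d.\ GMM joint; a change-of-measure lemma then turns this into an additive $O(1/n)$ slack in KL, which tensorizes, reducing the middle regime to a single-sample $\chi^2$ between two balanced $d$-dimensional Gaussian mixtures on a common sphere --- computable in closed form. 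Note also that the paper's $\sqrt{\log n}$ loss comes from this choice of $k$ (the effective noise variance becomes $\delta/\log n$), not from an MGF truncation as you guessed; both mechanisms cost logarithms, but they are different. To repair your argument in the middle regime you would almost certainly need to import a comparable reduction to an (almost) product model before the doubled-chain machinery can be brought to bear, at which point the per-sample two-alternative $\chi^2$ can take over.
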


Hence, the minimax rates achieved by the estimator in Theorem \ref{thm: mean estimation known delta upper bound}
are nearly asymptotically optimal, up to a $\sqrt{\log(n)}$ factor. 
The full proof of Theorem \ref{thm: impossibility lower bound for mean estimation} together with a summary of the main ideas used in the proof is presented in Appendix \ref{app:proof-converse-estimate-theta}. 

\begin{rem}[Relaxation of the noise distribution assumption]
\label{rk:extensions}
For the sake of clarity of exposition, we have assumed in Theorems  \ref{thm: mean estimation known delta upper bound} and \ref{thm: impossibility lower bound for mean estimation} that the noise samples $ \{Z_i\}_{i=1}^n $ are i.i.d.\ isotropic Gaussians. 
There are two straightforward relaxations of this assumption. First, our minimax upper bound (Theorem \ref{thm: mean estimation known delta upper bound}) can be proved to any subGaussian noise distribution, simply because all the concentration bounds for Gaussian random variables used in the proof admit subGaussian analogues. The  impossibility result  (converse, Theorem \ref{thm: impossibility lower bound for mean estimation}) trivially holds for subGaussian noise since Gaussians are special case of subGaussians. Second, the isotropic assumption can be relaxed to anisotropic noise with known covariance $\Sigma$ by simple standardization: If $ Z_i\sim{N}(0_d,\Sigma) $ are i.i.d.\ for some \emph{known} $\Sigma\succ0$, then the estimator will multiply the samples by $ \Sigma^{-1/2} $ and reduce the problem back to the isotropic setting. After applying the estimator we propose for the isotropic case, the estimator will obtain its final estimate by multiplying its isotropic estimate by $ \Sigma^{1/2} $. The loss of the estimator will then be gauged by the Mahalanobis distance, parameterized by $\Sigma$.
We refer the reader to Appendix \ref{app:open} for a discussion on more challenging directions in which the isotropic Gaussian assumption can be relaxed. 

\end{rem}

\section{Flip probability estimation for a given estimator of $\theta_{*}$
\label{sec:Flip-probability-estimation}}

In this section, we consider the problem of estimating $\delta$ whenever
$\theta_{*}$ is approximately known to be $\theta_{\sharp}$. We
propose a simple estimator, and then discuss the importance of the
accuracy of $\theta_{*}$. We then derive an impossibility result
for the matched case, $\theta_{\sharp}=\theta_{*}$. 

First note that an estimator for $\delta$ can be easily obtained from an estimator
for $\rho$, with essentially the same error rate, via $\hat{\delta}=\frac{1}{2}(1-\hat{\rho})$. Thus we focus on estimating $\rho$.
Assume for simplicity that $n$ is even. 
Observing that $\E[X_{2i}^\top X_{2i+1}] = \rho\|\theta_*\|^2 $, we propose the following natural estimator for $\rho$, which replaces the population average with empirical average:
\begin{equation}
\hat{\rho}_{\text{corr}}(X_{1}^{n};\theta_{\sharp})=\frac{1}{\|\theta_{\sharp}\|^{2}}\cdot\frac{2}{n}\sum_{i=1}^{n/2}X_{2i}^{\T}X_{2i-1}.\label{eq: estimator for rho}
\end{equation}
That is, the estimator is based on evaluating the correlation of each
of two adjacent samples $X_{2i}$ and $X_{2i-1}$. We first
state a general bound on the estimation error of this estimator. We
then consider the case in which $\theta_{*}$ is known,
and show how the estimation error is improved in this case.
\begin{thm}
\label{thm: estimation error of delta for mismatched delta}Assume
that $d\leq n$. 
Let $ \theta_*\in\mathbb{R}^d $ and let $ \theta_\sharp $ be an estimate of $ \theta_* $. 
Set $ \hat{\rho}\equiv\hat{\rho}_{\text{\emph{corr}}}(X_1^n;\theta_\sharp) $ and $ \hat{\delta} = \frac{1}{2}(1-\hat{\rho}) $. 
Then, it holds with probability $1-\frac{8}{n}$
that 
\begin{align}
\left|\hat{\delta}-\delta\right| & =\frac{1}{2}\left|\hat{\rho}-\rho\right| 
 \leq\frac{\left|\|\theta_{*}\|^{2}-\|\theta_{\sharp}\|^{2}\right|}{\|\theta_{\sharp}\|^{2}}+16\log(n)\left[\sqrt{\frac{\delta}{n}}+\frac{1}{\|\theta_{\sharp}\|}\sqrt{\frac{1}{n}}+\frac{1}{\|\theta_{\sharp}\|^{2}}\sqrt{\frac{d}{n}}\right].\label{eq: high probability bound on delta estimation}
\end{align}
\end{thm}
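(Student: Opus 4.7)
The plan is to bound $|\hat{\rho}-\rho|$ by separating a bias term (coming from plugging $\|\theta_{\sharp}\|^{2}$ in place of $\|\theta_{*}\|^{2}$) and a stochastic fluctuation term (around the population expectation). Expanding the inner product,
\[
X_{2i}^{\T}X_{2i-1}=S_{2i}S_{2i-1}\|\theta_{*}\|^{2}+S_{2i}\theta_{*}^{\T}Z_{2i-1}+S_{2i-1}\theta_{*}^{\T}Z_{2i}+Z_{2i}^{\T}Z_{2i-1},
\]
and taking expectations term-by-term (only the first survives) gives $\E[X_{2i}^{\T}X_{2i-1}]=\rho\|\theta_{*}\|^{2}$, hence $\E[\hat{\rho}]=\rho\|\theta_{*}\|^{2}/\|\theta_{\sharp}\|^{2}$. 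Since $|\rho|\le 1$, the bias is bounded by $|\|\theta_{*}\|^{2}-\|\theta_{\sharp}\|^{2}|/\|\theta_{\sharp}\|^{2}$, which is the first term of the inequality. It remains to concentrate $\frac{2}{n}\sum_{i=1}^{n/2}X_{2i}^{\T}X_{2i-1}$ around its mean, term-by-term.

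The crucial step, and the one responsible for the $\sqrt{\delta/n}$ rate (rather than the naive $\sqrt{1/n}$), is the handling of the sign-sign term. I would represent the Markov chain via its increments $F_{k}\dfn S_{k}S_{k-1}\in\{-1,+1\}$, which are i.i.d.\ with $\E[F_{k}]=\rho$ and $\mathrm{Var}(F_{k})=1-\rho^{2}\le 4\delta$. Then $S_{2i}S_{2i-1}=F_{2i}$, so $\{S_{2i}S_{2i-1}\}_{i\in[n/2]}$ is i.i.d., bounded by $1$, with variance at most $4\delta$. Bernstein's inequality yields $|\frac{2}{n}\sum_{i}(S_{2i}S_{2i-1}-\rho)|\lesssim \sqrt{\delta\log(n)/n}+\log(n)/n\lesssim \log(n)\sqrt{\delta/n}$ with probability $1-2/n$. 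Multiplying by $\|\theta_{*}\|^{2}$ and dividing by $\|\theta_{\sharp}\|^{2}$ contributes $\frac{\|\theta_{*}\|^{2}}{\|\theta_{\sharp}\|^{2}}\log(n)\sqrt{\delta/n}$.

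The two cross terms and the noise-noise term are handled by more standard concentration. Conditionally on $S_{1}^{n}$, the partial sum $\frac{2}{n}\sum_{i}S_{2i}\theta_{*}^{\T}Z_{2i-1}$ is a centered Gaussian because the $Z_{2i-1}$ involved for distinct $i$ are independent; its variance is $2\|\theta_{*}\|^{2}/n$, so Gaussian tail gives $\lesssim \|\theta_{*}\|\sqrt{\log(n)/n}$ (and similarly for the second cross term). The noise-noise sum $\frac{2}{n}\sum_{i}Z_{2i}^{\T}Z_{2i-1}$ is a sum of $n/2$ independent centered sub-exponential variables (with disjoint Gaussian indices) each of variance $d$ and sub-exponential norm $O(\sqrt{d})$, so Bernstein yields $\lesssim \log(n)\sqrt{d/n}$. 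A union bound over the four concentration events (each with probability $\ge 1-2/n$) and division by $\|\theta_{\sharp}\|^{2}$ assembles the bound on $|\hat{\rho}-\E\hat{\rho}|$. Finally, using the elementary bounds $\|\theta_{*}\|^{2}\le \|\theta_{\sharp}\|^{2}+|\|\theta_{*}\|^{2}-\|\theta_{\sharp}\|^{2}|$ and $\|\theta_{*}\|\le \sqrt{2}\|\theta_{\sharp}\|+\sqrt{2}\sqrt{|\|\theta_{*}\|^{2}-\|\theta_{\sharp}\|^{2}|}$ (applied with $\sqrt{\delta/n}\le 1$, and a simple AM-GM absorption into the bias), I would convert the $\|\theta_{*}\|$-dependent prefactors into $\|\theta_{\sharp}\|$-dependent ones plus an additional contribution absorbable into the bias term, giving the stated inequality after dividing by $2$.

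The main obstacle is the sharp $\sqrt{\delta/n}$ rate for the sign-sign sum: a straightforward Hoeffding bound on $\{S_{2i}S_{2i-1}\}_{i}$ would ignore the variance reduction from $\delta$ being small and only produce $\log(n)/\sqrt{n}$, hurting the bound near $\delta=0$. The trick is to notice that the increments $F_{k}=S_{k}/S_{k-1}$ of a binary symmetric Markov chain are i.i.d., so $S_{2i}S_{2i-1}$ form a sequence of \emph{independent} $\pm 1$ variables with mean $\rho$ and variance $\asymp\delta$, making Bernstein's inequality (rather than Hoeffding) the right tool. The remaining arithmetic of converting $\|\theta_{*}\|$-dependencies into $\|\theta_{\sharp}\|$-dependencies is routine but requires being careful enough that the ``extra'' terms are absorbed into the bias (or are trivially dominated when $\|\theta_{*}\|$ and $\|\theta_{\sharp}\|$ are wildly different).
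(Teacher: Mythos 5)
Your proposal is correct and takes essentially the same route as the paper: expand $X_{2i}^\top X_{2i-1}$ into a sign-sign term, two cross terms, and a noise-noise term, then concentrate each around its mean and track the bias from $\|\theta_\sharp\| \ne \|\theta_*\|$. The ``crucial step'' you identify is the same in the paper: the sign-sign sum must be handled by a variance-sensitive (Bernstein-type) bound, using that $S_{2i}S_{2i-1}$ has variance $4\delta(1-\delta)\le 4\delta$, and this is what yields $\sqrt{\delta/n}$ rather than $\sqrt{1/n}$. One minor but genuine difference: the paper justifies treating $\{S_{2i}S_{2i-1}\}_{i}$ as i.i.d.\ by randomizing the sign of each pair $(X_{2i-1},X_{2i})$, whereas you observe directly that the increments $F_k\dfn S_kS_{k-1}$ of a binary symmetric Markov chain are already i.i.d.\ Bernoulli$(\rho)$ variables, so $\{S_{2i}S_{2i-1}\}_i=\{F_{2i}\}_i$ is automatically an i.i.d.\ sequence; this is slightly more direct and makes it transparent that no artificial randomization is needed (the cross terms and the noise-noise term concentrate conditionally on $S_1^n$ anyway). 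The other difference is cosmetic: you bound the noise-noise term via a generic sub-exponential Bernstein inequality, while the paper uses the exact identity $W\tilde W=\bigl(\tfrac{W+\tilde W}{2}\bigr)^2-\bigl(\tfrac{W-\tilde W}{2}\bigr)^2$ to reduce to chi-square tails; both produce the same $\log(n)\sqrt{d/n}$ rate.
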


The proof of Theorem \ref{thm: estimation error of delta for mismatched delta} appears in Appendix \ref{app:proof-achievability-estimate-delta}. 
Note that Theorem \ref{thm: estimation error of delta for mismatched delta}
states a high-probability bound, suitable to its usage later on in
Section \ref{sec:Mean-estimation-under}. A bound on the expectation
of the error can be obtained by the standard method of integrating tails.

\paragraph*{The effect of knowledge of $\theta_{*}$}

If $ \theta_* $ is known up to a sign, i.e., $ \theta_\sharp = \pm\theta_* $, then for the purpose of $ \rho $ (or equivalently $ \delta $) estimation, the model \eqref{eq: Gaussian Markov model} can be reduced to a one-dimensional model by rotational invariance of isotropic Gaussian (See additional details in Appendix \ref{app:proof-converse-converse-estimate-delta}). It then immediately follows from Theorem \ref{thm: estimation error of delta for mismatched delta} that:
\begin{cor}
\label{cor: estimation error of delta for mismatched delta known mean}Assume that $ d\le n $, 
$\|\theta_{*}\|\leq1$ and $\theta_{\sharp}=\pm\theta_{*}$. 
Let $ U_1^n $ be defined in as $ U_i \dfn \|\theta_*\|\cdot S_i+W_i $ where $ W_i\sim N(0,1) $ i.i.d., $ \hat{\rho}\equiv\hat{\rho}_{\text{\emph{corr}}}(U_1^n;\theta_\sharp) $ and $ \hat{\delta} = \frac{1}{2}(1-\hat{\rho}) $. 
Then
it holds with probability $1-\frac{8}{n}$ that 
\begin{equation}
\left|\hat{\delta}-\delta\right|=\frac{1}{2}\left|\hat{\rho}-\rho\right|\leq\frac{18\log(n)}{\|\theta_{*}\|^{2}}\sqrt{\frac{1}{n}}.\label{eq: high probability bound on delta estimation known mean}
\end{equation}
\end{cor}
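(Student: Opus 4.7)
The plan is to derive the bound as a direct specialization of Theorem~\ref{thm: estimation error of delta for mismatched delta} to the one-dimensional sufficient statistic that is obtained by projecting the observations onto the known direction $\theta_*/\|\theta_*\|$. Concretely, the model for $U_1^n$ prescribed in the statement, $U_i = \|\theta_*\|\cdot S_i + W_i$ with $W_i \sim N(0,1)$ i.i.d., is itself an instance of \eqref{eq: Gaussian Markov model} in dimension $d = 1$ with scalar mean parameter $\|\theta_*\| \in \mathbb{R}$. Under the hypothesis $\theta_\sharp = \pm\theta_*$, the quantity $\|\theta_\sharp\|^2 = \|\theta_*\|^2$ matches exactly, and $\hat{\rho}_{\text{corr}}(U_1^n;\theta_\sharp) = \frac{1}{\|\theta_*\|^2}\cdot\frac{2}{n}\sum_{i=1}^{n/2}U_{2i}U_{2i-1}$ is the estimator \eqref{eq: estimator for rho} applied verbatim to this one-dimensional model.

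Invoking Theorem~\ref{thm: estimation error of delta for mismatched delta} with $d = 1$ and matched-norm mean $\|\theta_\sharp\| = \|\theta_*\|$, the mismatch term $|\|\theta_*\|^2 - \|\theta_\sharp\|^2|/\|\theta_\sharp\|^2$ vanishes, leaving, with probability at least $1 - 8/n$,
\[
\left|\hat{\delta} - \delta\right| \le 16 \log(n)\left[\sqrt{\frac{\delta}{n}} + \frac{1}{\|\theta_*\|}\sqrt{\frac{1}{n}} + \frac{1}{\|\theta_*\|^2}\sqrt{\frac{1}{n}}\right].
\]
I would then collapse the bracketed expression using the standing hypotheses: $\delta \le 1/2$ yields $\sqrt{\delta} \le 1$, while $\|\theta_*\| \le 1$ gives $1 \le 1/\|\theta_*\| \le 1/\|\theta_*\|^2$, so each of the three terms is dominated by $\|\theta_*\|^{-2}\sqrt{1/n}$. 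Summing and absorbing the numerical constants (with a mild tightening of the first term via $\delta \le 1/2$) recovers the claimed bound \eqref{eq: high probability bound on delta estimation known mean}.

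Since the reduction to one dimension is already baked into the statement (the corollary directly posits $U_1^n$), there is essentially no analytic obstacle; the work reduces to the bookkeeping just outlined. The only point worth underscoring, and presumably what the paper's appendix spells out, is that the reduction is lossless for estimating $\delta$ precisely because of the rotational invariance of the isotropic Gaussian noise $Z_i$: the component of each $X_i$ orthogonal to $\theta_*/\|\theta_*\|$ is independent of the sign chain $S_0^n$ and is therefore an ancillary statistic for the Markov parameter, so the scalar projection $U_i$ discards no information about $\delta$.
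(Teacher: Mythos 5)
Your approach is exactly the paper's: observe that the stated model for $U_1^n$ is the $d=1$ instance of \eqref{eq: Gaussian Markov model}, that the matched hypothesis $\theta_\sharp=\pm\theta_*$ kills the bias term, and then specialize Theorem~\ref{thm: estimation error of delta for mismatched delta} with $d=1$. The paper itself presents the corollary as an immediate consequence of that theorem after noting the rotational-invariance reduction, which is the same remark you close with.

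One small but real issue: the numerical constant does not follow from Theorem~\ref{thm: estimation error of delta for mismatched delta} \emph{as stated}. With $d=1$, $\theta_\sharp=\pm\theta_*$, $\|\theta_*\|\le1$ and $\delta\le1/2$, the theorem's right-hand side evaluates to
\[
16\log(n)\left[\sqrt{\tfrac{\delta}{n}}+\tfrac{1}{\|\theta_*\|}\sqrt{\tfrac1n}+\tfrac{1}{\|\theta_*\|^{2}}\sqrt{\tfrac1n}\right]
\le 16\Bigl(\tfrac{1}{\sqrt 2}+2\Bigr)\cdot\frac{\log(n)}{\|\theta_*\|^{2}}\sqrt{\tfrac1n}\approx \frac{43\log(n)}{\|\theta_*\|^{2}}\sqrt{\tfrac1n},
\]
which exceeds the claimed $18\log(n)/(\|\theta_*\|^2\sqrt n)$ (e.g.\ at $\|\theta_*\|=1$). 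Your remark that a ``mild tightening of the first term via $\delta\le1/2$'' recovers the bound is therefore not correct as written: even with the first term discarded you still get $32>18$. To actually land on $18$ one must go back into the proof of Theorem~\ref{thm: estimation error of delta for mismatched delta} and use the sharper per-term constants it produces for $|\hat\rho-\rho|$ (roughly $7\log n$, $4\sqrt{\log n}$, $16\log n$), together with the factor $\tfrac12$ from $|\hat\delta-\delta|=\tfrac12|\hat\rho-\rho|$; that yields a coefficient around $12.5$, which is indeed below $18$. The paper glosses over this by saying the corollary ``immediately follows,'' but the stated theorem is too loose to make that literal, so the tighter intermediate bounds from the theorem's proof need to be cited (or a weaker constant in the corollary accepted).
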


Numerical validation of the performance of the estimator $ \hat{\delta}_{\text{corr}}(X_1^n;\theta_\sharp) = \frac{1}{2}(1 - \hat{\rho}_{\text{corr}}(X_1^n;\theta_\sharp)) $ in the mismatched (Theorem \ref{thm: estimation error of delta for mismatched delta}) and matched (Corollary \ref{cor: estimation error of delta for mismatched delta known mean}) cases is provided 
in Appendix \ref{app:numerical-validation}. 

We next consider an impossibility lower bound. 
\begin{prop}
\label{prop: impossibility lower bound for flip probability estimation}Suppose
that $\theta_{\sharp}=\theta_{*}$ and $ \|\theta_*\|\leq\frac{1}{\sqrt{2}} $. 
Then
\[
\inf\limits_{\hat{\delta}(U_{1}^{n})}\sup\limits_{\delta\in[0,1]}\E[|\delta-\hat{\delta}(U_{1}^{n})|]\geq\frac{1}{32\sqrt{n}},
\]
where the infimum is over any estimator $\hat{\delta}(U_{1}^{n})$ based on the model $ U_i = \|\theta_*\| S_i + W_i $ where each $ W_i $ is i.i.d.\ $ N(0,1) $.
\end{prop}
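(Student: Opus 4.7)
The plan is to apply Le~Cam's two-point method with a data-processing reduction to a pure Bernoulli estimation problem. Fix $\Delta>0$ to be chosen later and set $\delta_{0}=\frac{1}{2}$ (under which the signs $S_{1}^{n}$ are i.i.d.\ Rademacher) together with $\delta_{1}=\frac{1}{2}-2\Delta\in[0,1]$, so that $|\delta_{0}-\delta_{1}|=2\Delta$. Write $P_{\delta}$ for the law of $U_{1}^{n}$ under flip probability $\delta$. The standard two-point reduction gives
\[
\inf_{\hat{\delta}(U_{1}^{n})}\sup_{\delta\in[0,1]}\E\!\left[|\hat{\delta}-\delta|\right]\geq\frac{|\delta_{0}-\delta_{1}|}{2}\bigl(1-\dtv(P_{\delta_{0}},P_{\delta_{1}})\bigr),
\]
so the remaining task is to upper bound $\dtv(P_{\delta_{0}},P_{\delta_{1}})$.

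Rather than attempt this directly for two hidden Markov laws, I would invoke the data-processing inequality. Since $U_{1}^{n}$ is obtained from the latent sign chain $S_{0}^{n}$ by adding the independent Gaussian sequence $W_{1}^{n}$,
\[
\Dkl\!\left(P_{\delta_{0}}(U_{1}^{n})\,\|\,P_{\delta_{1}}(U_{1}^{n})\right)\leq\Dkl\!\left(P_{\delta_{0}}(S_{0}^{n})\,\|\,P_{\delta_{1}}(S_{0}^{n})\right).
\]
Both chains share the uniform initial distribution on $S_{0}$, and under each $\delta_{j}$ the conditional $S_{i}\mid S_{i-1}$ is a $\mathrm{Bern}(\delta_{j})$ flip, so the chain KL telescopes:
\[
\Dkl\!\left(P_{\delta_{0}}(S_{0}^{n})\,\|\,P_{\delta_{1}}(S_{0}^{n})\right)=n\cdot\Dkl\!\left(\mathrm{Bern}(\delta_{0})\,\|\,\mathrm{Bern}(\delta_{1})\right).
\]

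For the Bernoulli KL I would use the elementary $\chi^{2}$ surrogate $\Dkl(\mathrm{Bern}(p)\,\|\,\mathrm{Bern}(q))\leq(p-q)^{2}/[q(1-q)]$. With the choice above, $q(1-q)=\frac{1}{4}-4\Delta^{2}\geq\frac{1}{8}$ whenever $\Delta\leq\frac{1}{4\sqrt{2}}$, so the overall bound reads $\Dkl(P_{\delta_{0}}(U_{1}^{n})\,\|\,P_{\delta_{1}}(U_{1}^{n}))\leq 32n\Delta^{2}$. Pinsker's inequality then gives $\dtv(P_{\delta_{0}},P_{\delta_{1}})\leq 4\sqrt{n}\,\Delta$. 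Choosing $\Delta=\frac{1}{8\sqrt{n}}$ makes this $\leq\frac{1}{2}$, and the two-point lemma delivers a minimax lower bound of $\Delta/2=\frac{1}{16\sqrt{n}}\geq\frac{1}{32\sqrt{n}}$, establishing the claim.

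A noteworthy feature of this plan is that the proof never involves $\|\theta_{*}\|$: any information about $\delta$ carried by $U_{1}^{n}$ must already be present in the chain $S_{0}^{n}$, and data processing captures this in one line. The hypothesis $\|\theta_{*}\|\leq\frac{1}{\sqrt{2}}$ is therefore not used in the argument itself and is included in the statement only to match the regime of the upper bound in Corollary~\ref{cor: estimation error of delta for mismatched delta known mean}. The only mild pitfall is the temptation to compute $\dtv$ between the two HMM laws directly via a $\chi^{2}$ or transfer-matrix expansion (which would introduce a factor roughly $1-e^{-\|\theta_{*}\|^{2}/2}$ per coordinate); this is doable but yields no improvement at order $1/\sqrt{n}$ and requires considerably more bookkeeping.
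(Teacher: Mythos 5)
Your proof is correct, and it takes a genuinely different and shorter route than the paper. The paper argues via a genie that reveals the even-indexed signs $S_0,S_2,\ldots,S_n$ to the estimator, then bounds $\dtv(P_{U_1^n,S_0S_2\cdots S_n},Q_{U_1^n,S_0S_2\cdots S_n})$ through a chain-rule decomposition into a ``sign'' part (handled via the binary entropy function) and a ``conditional-sample'' part (handled via a chi-square bound on one-dimensional Gaussian mixtures conditioned on a pair of even-indexed signs); this yields the TV bound $\sqrt{\tfrac{5}{2}n}\,\epsilon + \sqrt{8n}\,t\,\epsilon$ of Lemma \ref{lem:A total variation bound for flip estimation}. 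You instead collapse the whole argument into a single data-processing step: since the kernel $S_1^n \mapsto U_1^n$ is fixed across the two hypotheses, $\Dkl(P_{\delta_0}^U\|P_{\delta_1}^U)\le\Dkl(P_{\delta_0}^S\|P_{\delta_1}^S)= n\,\Dkl(\mathrm{Bern}(\delta_0)\|\mathrm{Bern}(\delta_1))$, and then a $\chi^2$-surrogate plus Pinsker finishes. The two approaches both yield $\Omega(1/\sqrt{n})$; yours is cleaner, requires no condition on $\|\theta_*\|$, and reveals that the $1/\sqrt{n}$ barrier is inherited purely from estimating a Bernoulli transition parameter from $n$ chain steps. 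The trade-off is that your reduction discards all information about how $\|\theta_*\|$ attenuates the signal, so it can never produce a $\|\theta_*\|$-dependent strengthening of the lower bound---exactly the open direction the paper highlights---whereas the paper's genie-aided TV bound does retain a $t$-dependent term, even though it is not exploited in the final constant. Your observation that $\|\theta_*\|\le 1/\sqrt{2}$ is not used is accurate; it is stated only to align the regimes of the upper and lower bounds.
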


The proof of Proposition \ref{prop: impossibility lower bound for flip probability estimation} is presented in Appendix \ref{app:proof-converse-converse-estimate-delta}. 
According to Corollary \ref{cor: estimation error of delta for mismatched delta known mean} and Proposition \ref{prop: impossibility lower bound for flip probability estimation}, in estimating $ \delta $ with a known $ \theta_* $, though the dependence $ \Theta(\frac{1}{\sqrt{n}}) $ of the minimax error rate on the sample size is shown to be nearly optimal, it is unclear what the optimal dependence on the signal strength should be. 
This is left as an interesting open question and we discuss the challenges associated with this problem in Appendix \ref{app:proof-converse-converse-estimate-delta}.



\section{Mean estimation under an unknown flip probability \label{sec:Mean-estimation-under}}

As we have seen, if an estimator for $\theta_{*}$ knows the value
of $\delta$, and if both $\delta\leq\frac{1}{2}$ and $d\leq\delta n$
hold, then the estimator can achieve improved error rates over the
GMM case ($\delta=\frac{1}{2}$). In this section, we assume that
both $\theta_{*}$ and $\delta$ are unknown, and so the estimator
is required to estimate $\delta$ in order to use this knowledge for
an estimator of $\theta_{*}$. We propose an estimation procedure
of three steps based on sample splitting of $3n$ samples. We mention
at the outset that the regime in which improvement is possible will
be for low signal strength $\|\theta_{*}\|\lesssim1$ (low separation
between the components), and up to a dimension which depends on $\delta$.
Of course the estimation procedure does not know $(\theta_{*},\delta)$
in advance, and so it is required to identify if $(\theta_{*},\delta)$
are in this regime during its operation.

We now begin with an overview of the steps of the estimation algorithm.
At Step A, the algorithm estimates $\theta_{*}$ based on $X_{1}^{n}$
assuming a Gaussian mixture model $(\delta=\frac{1}{2})$ to obtain
an estimate $\hat{\theta}^{(A)}$. Then, based on $\|\hat{\theta}^{(A)}\|$,
the algorithm decides whether improvement is potentially possible
had $\delta$ was known. There are two cases. The first case is that
$\|\hat{\theta}^{(A)}\|$ is too low, and then its estimate is not
sufficiently accurate to be used in the next steps. Essentially, this
happens when the norm is below the global minimax rate $(\frac{d}{n})^{1/4}$,
and the estimation error of the norm on the same scale as the norm of $\|\theta_{*}\|$.
A trivial estimator of $\hat{\theta}=0$ is then optimal in terms
of error rates. It can be already noted at this step that while the global minimax
rate for the known $\delta$ case is $(\frac{\delta d}{n})^{1/4}$,
here the algorithm already stops and estimates $\hat{\theta}=0$ even if just $\|\theta_{*}\|\lesssim(\frac{d}{n})^{1/4}$, leading to larger global minimax rate.
The second case is that $\|\theta_{*}\|$ is larger than a constant.
In this case, the estimation based on a GMM already achieves the optimal
parametric $O(\sqrt{\frac{d}{n}})$ error rate of the Gaussian location
model, and so no further estimation steps are necessary. Otherwise,
an improvement in the estimation is possible. The algorithm proceeds
to Step B, and uses $X_{n+1}^{2n}$ to obtain an estimate $\hat{\delta}^{(B)}$
of $\delta$ based on the mismatched $\theta_{\sharp}\equiv\hat{\theta}^{(A)}$.
Then, based on the estimate $\hat{\delta}^{(B)}$ the algorithm decides
whether the accuracy of $\hat{\delta}^{(B)}$ is sufficient to be
used in an refined estimation of $\theta_{*}$. If the accuracy of
$\hat{\delta}^{(B)}$ is not good enough, then the algorithm outputs
the estimate from Step A, that is $\hat{\theta}^{(A)}$. Otherwise,
it proceeds to Step C, in which $\theta_{*}$ is re-estimated using
$X_{2n+1}^{3n}$, based on a mismatched choice of $k$, that is $k\asymp\frac{1}{\hat{\delta}^{(B)}}$
instead of $k=\frac{1}{8\delta}$. Intuitively, the estimated value
$\hat{\delta}^{(B)}$ should be larger than $\delta$ so the resulting
block size $k\asymp\frac{1}{\hat{\delta}^{(B)}}$ will be such that
the gain in the block is still close to $1$ with high probability.
On the other hand, it is desired that $\hat{\delta}^{(B)}$ will be
on the same scale as $\delta$ so that the estimation rate (\ref{eq: mean estimation error for joint estimation -- low dimension}) (see also \eqref{eq: minimax loss at low dimension Markov contribution statement}) -- which now essentially holds with $\hat{\delta}^{(B)}$ instead of
$\delta$ -- would be as small as possible. Thus, if the algorithm
has assured in Step B that $\hat{\delta}^{(B)}\asymp\delta$, then
at Step C it will achieve the error rate indicated in (\ref{eq: mean estimation error for joint estimation -- low dimension}).

\begin{algorithm}
\caption{Mean estimation for unknown $\delta$ \label{alg:Mean estimation with unknown delta}}

\begin{algorithmic}[1]

\State \textbf{input:} Parameters $\lambda_{\theta},\lambda_{\delta}>0$
(from (\ref{eq: mean estimation error for joint estimation -- low dimension})
(\ref{eq: mean estimation error for joint estimation -- high dimension})
(\ref{eq: delta estimation error for joint estimation})), $3n$ data
samples $X_{1}^{3n}$ from the model (\ref{eq: Gaussian Markov model})

\State \textbf{step A:} Estimate $\theta_{*}$ assuming a Gaussian
mixture model: 
\[\hat{\theta}^{(A)}\equiv\hat{\theta}_{\text{cov}}(X_{1}^{n};k=1)\]

\If{ $\|\hat{\theta}^{(A)}\|\leq2\lambda_{\theta}\cdot\log(n)\cdot(\frac{d}{n})^{1/4}$} 

\State\Return $\hat{\theta}=0$ \Comment{ No further improvement
can be guaranteed}

\ElsIf { $\|\hat{\theta}^{(A)}\|\geq\frac{1}{2}$ } 

\State\Return $\hat{\theta}=\hat{\theta}^{(A)}$ \Comment{
No further improvement is possible}

\EndIf

\State \textbf{step B:} Estimate $\delta$ assuming a mismatched
mean value $\hat{\theta}^{(A)}$: 
\[\hat{\delta}^{(B)}=\hat{\delta}_{\text{corr}}(X_{n+1}^{2n};\hat{\theta}^{(A)})\]

\If{ $\hat{\delta}^{(B)}\leq64\lambda_{\delta}\lambda_{\theta}\frac{\log(n)}{\|\hat{\theta}^{(A)}\|^{2}}\sqrt{\frac{d}{n}}$} 

\State\Return $\hat{\theta}=\hat{\theta}^{(A)}$ \Comment{
No further improvement can be guaranteed}

\EndIf

\State \textbf{step C}: Estimate $\theta_{*}$ assuming a mismatched flip probability $\hat{\delta}^{(B)}$: 
\[\hat{\theta}^{(C)}=\hat{\theta}_{\text{cov}}\left(X_{2n+1}^{3n};k=\dfrac{1}{16\hat{\delta}^{(B)}}\right)\]

\State \textbf{return }$\hat{\theta}=\hat{\theta}^{(C)}$.

\end{algorithmic}

\end{algorithm}

The formal description of the estimation algorithm 
is provided in Algorithm \ref{alg:Mean estimation with unknown delta}. 
We remark that refining the estimation of $\delta$ can be
easily incorporated as a fourth step of this algorithm, but we do
not present this in order to keep the statement of the result simple.
The error of the estimator output by Algorithm \ref{alg:Mean estimation with unknown delta}
is as follows:
\begin{thm}
\label{thm: Estimation error for algorithm}There exist constants $c_{1},c_{2}\geq0$ and $ \lambda_\theta,\lambda_\delta\ge1 $ such that if $d\leq\frac{n}{4\lambda_{\theta}^2\log^{2}(n)\wedge16}$ then the output $\hat{\theta}$
of Algorithm \ref{alg:Mean estimation with unknown delta} satisfies for any $ \theta_*\in\mathbb{R}^d $, with probability
$1-O(\frac{1}{n})$: \\
If $d\leq\frac{1}{64\lambda_{\delta}^2\lambda_{\theta}^2\log^{2}(n)}\delta^{4}n$
then 
\begin{equation}
\loss(\hat{\theta},\theta_{*})\leq c_{1}\log n\cdot\begin{cases}
\|\theta_{*}\|, & \|\theta_{*}\|\leq\lambda_{\theta}\log(n)\left(\frac{d}{n}\right)^{1/4}\\
\frac{1}{\|\theta_{*}\|}\sqrt{\frac{d}{n}}, & \lambda_{\theta}\log(n)\left(\frac{d}{n}\right)^{1/4}\leq\|\theta_{*}\|\leq\sqrt{8\lambda_{\delta}\lambda_{\theta}\log(n)}\left(\frac{d}{\delta^{2}n}\right)^{1/4}\\
\frac{1}{\|\theta_{*}\|}\sqrt{\frac{\delta d}{n}}, & \sqrt{8\lambda_{\delta}\lambda_{\theta}\log(n)}\left(\frac{d}{\delta^{2}n}\right)^{1/4}\leq\|\theta_{*}\|\leq\sqrt{\delta}\\
\sqrt{\frac{d}{n}}, & \|\theta_{*}\|\geq\sqrt{\delta}
\end{cases};\label{eq: high probability mean estimation error joint very very low dimension}
\end{equation}
If $d\geq\frac{1}{64\lambda_{\delta}^2\lambda_{\theta}^2\log^{2}(n)}\delta^{4}n$
then 
\begin{equation}
\loss(\hat{\theta},\theta_{*})\leq c_{2}\log n\cdot\begin{cases}
\|\theta_{*}\|, & \|\theta_{*}\|\leq\lambda_{\theta}\log(n)\left(\frac{d}{n}\right)^{1/4}\\
\frac{1}{\|\theta_{*}\|}\sqrt{\frac{d}{n}}, & \lambda_{\theta}\log(n)\left(\frac{d}{n}\right)^{1/4}\leq\|\theta_{*}\|\leq\sqrt{8\lambda_{\delta}\lambda_{\theta}\log(n)}\left(\frac{d}{\delta^{2}n}\right)^{1/4}\\
\sqrt{\frac{d}{n}}, & \|\theta_{*}\|\geq\sqrt{8\lambda_{\delta}\lambda_{\theta}\log(n)}\left(\frac{d}{\delta^{2}n}\right)^{1/4}
\end{cases}.\label{eq: high probability mean estimation error joint very low dimension}
\end{equation}
\end{thm}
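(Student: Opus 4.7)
The plan is to trace the algorithm branch by branch, applying Theorem \ref{thm: mean estimation known delta upper bound} to bound the Step A and Step C estimates and Theorem \ref{thm: estimation error of delta for mismatched delta} to bound the Step B estimate, and then verify that in each case the returned value meets the claimed rate. All events occur with high probability via a union bound over the three sample-splits, so I would fix a single good event on which all concentration bounds hold simultaneously up to a $\log n$ factor and work deterministically inside it.

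First I would analyze Step A. Invoking Theorem \ref{thm: mean estimation known delta upper bound} with $\delta=1/2$ and $k=1$ yields, with high probability, $\loss(\hat{\theta}^{(A)},\theta_*)\lesssim \log(n)\cdot[(\tfrac{d}{n})^{1/4}\vee \tfrac{1}{\|\theta_*\|}\sqrt{\tfrac{d}{n}}\vee \sqrt{\tfrac{d}{n}}]$, which identifies the right constant $\lambda_\theta$ for the threshold in the algorithm. Branch 1 returns $\hat{\theta}=0$ when $\|\hat{\theta}^{(A)}\|\le 2\lambda_\theta\log(n)(d/n)^{1/4}$; by triangle inequality (together with the sign-ambiguity in $\loss$) this forces $\|\theta_*\|\lesssim \log(n)(d/n)^{1/4}$, giving loss $\|\theta_*\|$ matching the first line of both \eqref{eq: high probability mean estimation error joint very very low dimension} and \eqref{eq: high probability mean estimation error joint very low dimension}. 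Branch 2 returns $\hat{\theta}^{(A)}$ when $\|\hat{\theta}^{(A)}\|\ge 1/2$; this implies $\|\theta_*\|\gtrsim 1$, putting us in the Gaussian-location regime of Theorem \ref{thm: mean estimation known delta upper bound}, so $\loss(\hat{\theta}^{(A)},\theta_*)\lesssim \sqrt{d/n}$, matching the last line of both bounds.

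Otherwise we reach Step B. Here $\lambda_\theta\log(n)(d/n)^{1/4}\lesssim\|\theta_*\|\lesssim 1$, and Theorem \ref{thm: estimation error of delta for mismatched delta} applied with $\theta_\sharp=\hat{\theta}^{(A)}$ gives
\[
|\hat{\delta}^{(B)}-\delta|\lesssim \log(n)\left[\frac{|\|\theta_*\|^2-\|\hat{\theta}^{(A)}\|^2|}{\|\hat{\theta}^{(A)}\|^2}+\sqrt{\frac{\delta}{n}}+\frac{1}{\|\hat{\theta}^{(A)}\|}\sqrt{\frac{1}{n}}+\frac{1}{\|\hat{\theta}^{(A)}\|^2}\sqrt{\frac{d}{n}}\right].
\]
Using the Step A bound and $\|\hat{\theta}^{(A)}\|\asymp \|\theta_*\|$ (since $\|\hat{\theta}^{(A)}\|\ge 2\lambda_\theta\log(n)(d/n)^{1/4}$ dominates the Step A error) the first three terms collapse into $O(\log(n)\|\theta_*\|^{-2}\sqrt{d/n})$, which also absorbs the last term. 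This identifies $\lambda_\delta$. Branch 3 returns $\hat{\theta}^{(A)}$ when $\hat{\delta}^{(B)}\le 64\lambda_\delta\lambda_\theta\log(n)\|\hat{\theta}^{(A)}\|^{-2}\sqrt{d/n}$; combining with the Step B bound this forces $\delta\lesssim \log(n)\|\theta_*\|^{-2}\sqrt{d/n}$, equivalently $\|\theta_*\|^2\lesssim \log(n)\sqrt{d/(\delta^2 n)}$, and Theorem \ref{thm: mean estimation known delta upper bound} again gives $\loss(\hat{\theta}^{(A)},\theta_*)\lesssim \log(n)\tfrac{1}{\|\theta_*\|}\sqrt{d/n}$, matching the second line of \eqref{eq: high probability mean estimation error joint very very low dimension} and \eqref{eq: high probability mean estimation error joint very low dimension}.

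Finally, Branch 4 proceeds to Step C. Here $\hat{\delta}^{(B)}\ge 64\lambda_\delta\lambda_\theta\log(n)\|\hat{\theta}^{(A)}\|^{-2}\sqrt{d/n}$, which via the Step B bound forces $\hat{\delta}^{(B)}\asymp \delta$ (and in particular $\delta^4 n\gtrsim d/\log^2 n$, ruling out Branch 4 in case two of the theorem). The key technical point is that Theorem \ref{thm: mean estimation known delta upper bound} can be extended to block lengths $k\asymp 1/\delta$ rather than exactly $1/(8\delta)$: the relevant Lemma on the random gain only requires $k\delta$ to be bounded above by a small constant so that $\xi_k\asymp 1$ and $\bar{S}$ has constant variance, which is preserved for $k=1/(16\hat{\delta}^{(B)})$ once $\hat{\delta}^{(B)}\asymp \delta$. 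Applying this extension to Step C with the third batch of samples yields the third line of \eqref{eq: high probability mean estimation error joint very very low dimension}, and the $\sqrt{d/n}$ line for $\|\theta_*\|\ge\sqrt{\delta}$. The main obstacle is exactly this mismatched-$k$ robustness step: I would verify it by re-examining the gain-moment computation $\xi_k$ and the eigenvalue perturbation bound used in the proof of Theorem \ref{thm: mean estimation known delta upper bound}, both of which depend continuously on $k\delta$, so replacing $1/(8\delta)$ by any $k$ with $k\delta\in[c_1,c_2]$ preserves all constants up to a factor absorbed into $c_1,c_2$.
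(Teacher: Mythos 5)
Your proposal is essentially the paper's proof, organized slightly differently: you condition on which branch of the algorithm fires, whereas the paper conditions primarily on which sub-interval $\|\theta_*\|$ lies in and then tracks which branch is consequently taken. Both routes use the same ingredients: Theorem \ref{thm: mean estimation known delta upper bound} at Steps A and C, Theorem \ref{thm: estimation error of delta for mismatched delta} at Step B, the intermediate facts $\|\hat\theta^{(A)}\|\asymp\|\theta_*\|$ and $\big|\|\hat\theta^{(A)}\|-\|\theta_*\|\big|\lesssim\log(n)\sqrt{d/n}$, and the observation that the analysis of $\hat\theta_{\text{cov}}(\cdot;k)$ remains valid for any $k\le k_*=\frac{1}{8\delta}$. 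One genuine simplification in your version: you show that merely reaching Step C already forces $\hat\delta^{(B)}\asymp\delta$, via $\hat\delta^{(B)}\geq 16\lambda_\delta\lambda_\theta\frac{\log(n)}{\|\theta_*\|^2}\sqrt{d/n}\geq 4\,|\hat\delta^{(B)}-\delta|$, which cleanly gives $\tfrac{4}{5}\delta\le\hat\delta^{(B)}\le\tfrac{4}{3}\delta$ in all Step-C cases; the paper only derives $\hat\delta^{(B)}\asymp\delta$ in the $\|\theta_*\|\ge\sqrt{8\lambda_\delta\lambda_\theta\log(n)}(d/\delta^2n)^{1/4}$ sub-case and otherwise falls back on $\hat\delta^{(B)}\ge\delta$ (so $k\le k_*$) plus a separate check that the GMM rate already matches the claimed bound there. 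Two small caveats: (i) your remark that the case $d\gtrsim\delta^4 n$ ``rules out Branch 4'' is not right --- the algorithm may still reach Step C in that regime; what happens is that the interval producing the $\frac{1}{\|\theta_*\|}\sqrt{\delta d/n}$ line is empty, so whatever rate Step C achieves is dominated by the GMM rate, which is all that \eqref{eq: high probability mean estimation error joint very low dimension} asserts; (ii) the Branch-3 case requires a careful constant check that ``algorithm stops at Step B'' forces $\|\theta_*\|$ below the precise threshold $\sqrt{8\lambda_\delta\lambda_\theta\log(n)}(d/\delta^2 n)^{1/4}$, which you only establish up to an unspecified constant --- this is the place where the paper chases constants, and doing it cleanly is delicate, but it is bookkeeping rather than a conceptual gap.
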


Theorem \ref{thm: Estimation error for algorithm} implies that up to logarithmic factors, the error rates of the known $\delta$ case are recovered for low enough dimension
$d\lesssim\delta^{4}n$ and $\|\theta_{*}\|\gtrsim\left(\frac{d}{\delta^{2}n}\right)^{1/4}$. The analysis of Algorithm \ref{alg:Mean estimation with unknown delta} appears in Appendix \ref{sec: proofs for joint estimation}. Numerical validation of the performance of Algorithm \ref{alg:Mean estimation with unknown delta} can be found 
in Appendix \ref{app:numerical-validation}.

\paragraph*{The impact of unknown $\delta$ on the estimation error of $ \theta_* $}

Comparing Theorems \ref{thm: impossibility lower bound for mean estimation} (known $\delta$) and \ref{thm: Estimation error for algorithm} (unknown $\delta$) reveals the deterioration in the estimation of $\theta_*$ due to the lack
of knowledge of $\delta$ from the three aspects mentioned in Section \ref{sec: contributions} (ignoring logarithmic
factors): First, the global minimax rate is $O(\frac{d}{n})^{1/4}$ as
for the GMM, instead of the rate $O(\frac{\delta d}{n})^{1/4}$ for
the Markov model case with known $\delta$. Second, at the regime
$(\frac{d}{n})^{1/4}\lesssim\|\theta_{*}\|\lesssim(\frac{d}{\delta^{2}n})^{1/4}$
the error rate is $O(\frac{1}{\|\theta_{*}\|}\sqrt{\frac{d}{n}})$
instead of the lower $O(\frac{1}{\|\theta_{*}\|}\sqrt{\frac{\delta d}{n}})$. 
Third, the algorithm is only effective when the dimension is as
low as $d\lesssim\delta^{4}n$. For higher dimensions, the rates of
the GMM are achieved, which can be achieved even without the knowledge
of $\delta$. 



\section{Conclusion and future work}

In this paper, we have considered an elementary, yet fundamental, high-dimensional
model with memory. We have obtained a sharp bound on the minimax rate
of estimation in case the underlying statistical dependency (flip
probability) is known, and proposed a three-step estimation algorithm
when it is unknown. This has revealed the gains possible in estimation
rates due to the memory between the samples, and smoothly interpolated
between the extreme cases of GLM and GMM. An interesting open problem is to either characterize the optimality
of the algorithm or improving in the unknown $\delta$ case, which requires understanding optimal
estimation of the flip probability.

Naturally, as the model considered in this paper is basic, there is
an ample of possibilities to generalize this model. These include,
a larger number of components in the mixture, statistical dependency
with a more complicated graphical structure between the data samples,
existence of nuisance parameters such as the noise variance, sharp
finite-sample/finite-iteration analysis of specific practical algorithms
such as Baum-Welch, location-scale model with anisotropic noise, heavy-tailed noise, and so on.


\begin{ack}

Part of this work was done when YZ was a postdoc at Technion where he received funding from the European Union’s Horizon 2020 research and innovation programme under grant agreement No 682203-ERC-[Inf-Speed-Tradeoff].
The work of of NW was supported in part by the Israel Science Foundation (ISF) under Grant 1782/22.
NW is grateful to Guy Bresler for introducing him to this problem, for the initial ideas that led to this research, and for many helpful discussions on the topic.

\end{ack}

\newpage

\bibliographystyle{plainnat}
\bibliography{HM_GM}

\section*{Checklist}


\begin{enumerate}

\item For all authors...
\begin{enumerate}
  \item Do the main claims made in the abstract and introduction accurately reflect the paper's contributions and scope?
    \answerYes{}
  \item Did you describe the limitations of your work?
    \answerYes{}
  \item Did you discuss any potential negative societal impacts of your work?
    \answerNA{}
  \item Have you read the ethics review guidelines and ensured that your paper conforms to them?
    \answerYes{}
\end{enumerate}

\item If you are including theoretical results...
\begin{enumerate}
  \item Did you state the full set of assumptions of all theoretical results?
    \answerYes{}
        \item Did you include complete proofs of all theoretical results?
    \answerYes{}
\end{enumerate}

\item If you ran experiments...
\begin{enumerate}
  \item Did you include the code, data, and instructions needed to reproduce the main experimental results (either in the supplemental material or as a URL)?
    \answerNA{}
  \item Did you specify all the training details (e.g., data splits, hyperparameters, how they were chosen)?
    \answerNA{}
        \item Did you report error bars (e.g., with respect to the random seed after running experiments multiple times)?
    \answerNA{}
        \item Did you include the total amount of compute and the type of resources used (e.g., type of GPUs, internal cluster, or cloud provider)?
    \answerNA{}
\end{enumerate}

\item If you are using existing assets (e.g., code, data, models) or curating/releasing new assets...
\begin{enumerate}
  \item If your work uses existing assets, did you cite the creators?
    \answerNA{}
  \item Did you mention the license of the assets?
    \answerNA{}
  \item Did you include any new assets either in the supplemental material or as a URL?
    \answerNA{}
  \item Did you discuss whether and how consent was obtained from people whose data you're using/curating?
    \answerNA{}
  \item Did you discuss whether the data you are using/curating contains personally identifiable information or offensive content?
    \answerNA{}
\end{enumerate}

\item If you used crowdsourcing or conducted research with human subjects...
\begin{enumerate}
  \item Did you include the full text of instructions given to participants and screenshots, if applicable?
    \answerNA{}
  \item Did you describe any potential participant risks, with links to Institutional Review Board (IRB) approvals, if applicable?
    \answerNA{}
  \item Did you include the estimated hourly wage paid to participants and the total amount spent on participant compensation?
    \answerNA{}
\end{enumerate}

\end{enumerate}


\newpage{}

\appendix

In Appendix \ref{app:heuristic} we provide heuristic justification for the scaling of the optimal minimax error rate in \eqref{eq: minimax loss at low dimension Markov contribution statement}. 
In Appendix \ref{sec:Proofs for Mean estimation} we provide the proofs
for Theorems \ref{thm: mean estimation known delta upper bound} and \ref{thm: impossibility lower bound for mean estimation}.
In Appendix \ref{sec:Proofs for delta estimation} we provide
the proofs for Theorem \ref{thm: estimation error of delta for mismatched delta} and Proposition \ref{prop: impossibility lower bound for flip probability estimation}. 
In Appendix \ref{sec: proofs for joint estimation}
we provide the proofs for Theorem \ref{thm: Estimation error for algorithm}. 
In Appendix \ref{sec:Useful-mathematical-tools}
we include some useful results for the sake of completeness. 

\paragraph*{Additional notation}
For a matrix $A\in\mathbb{R}^{d_{1}\times d_{2}}$, $\|A\|_{\text{op}}$
is the operator norm (with respect to Euclidean norms), and $\|A\|_{F}$ is the Frobenius norm of $A$.
$\mathbb{S}^{d-1}\dfn\{\theta\in\mathbb{R}^{d}\colon\|\theta\|=1\}$
is the unit sphere and $\mathbb{B}^{d}\dfn\{\theta\in\mathbb{R}^{d}\colon\|\theta\|\leq1\}$
is the unit ball in the $d$-dimensional Euclidean space.
For a pair
of probability distributions $P$ and $Q$ on a common alphabet ${\cal X}$
with densities $p$ and $q$ w.r.t. to a base measure $\nu$, we denote
the total variation distance by $\dtv(P,Q)\dfn\frac{1}{2}\int|p-q|\d\nu$,
the Kullback-Leibler (KL) divergence by $\Dkl(P\mid\mid Q)\dfn\int p\log\frac{p}{q}\d\nu$,
and the chi-square divergence by $\Dchis(P\mid\mid Q)\dfn\int\frac{(p-q)^{2}}{q}\d\nu=\int\frac{p^{2}}{q}\d\nu-1$.

\section{Heuristic justification of minimax rate \eqref{eq: minimax loss at low dimension Markov contribution statement}}
\label{app:heuristic}

The main intuition behind the HMM considered in this paper comes from the correlation decay phenomenon in graphical model. 
Indeed, one can view the Markov chain $ X_1\to X_2\to\cdots\to X_n $ sampled from the model \eqref{eq: Gaussian Markov model} as a simple graphical model whose conditional independence structure is expressed by a line graph on $n$ nodes. 
Each node $ X_i $ is conditionally independent of other samples given its neighbors $ X_{i-1} $ and $ X_{i+1} $. 
Furthermore, a pair of nodes $ X_i $ and $ X_j $ of large graph distance (i.e., $ |i - j| $) is approximately independent. 
Informally, we expect that there is one sign flip (i.e., $ S_i = -S_{i+1} $) per $ \approx\frac{1}{\delta} $ samples. 
Therefore, signs $ S_i $ and $ S_j $ are likely to have the same value if $ |i - j|\lesssim\frac{1}{\delta} $ and are approximately independent if $ |i - j|\gtrsim\frac{1}{\delta} $. 
This is the leading guideline for the design of our estimator \eqref{eq: general PCA estimation rule} for upper bound and the reduction to the genie-aided model \eqref{eq: Gaussian Markov model genie} for lower bound. 

Recall that for the Gaussian location model 
\begin{align}
X = \theta_* + Z \label{eqn:glm-model}
\end{align}
with $ Z\sim N(0,I_d) $, the minimax rate is given by
\begin{align}
\M_{\text{GLM}}(n,d,t) &\asymp t\wedge\sqrt{\frac{d}{n}} . \label{eqn:glm-rate}
\end{align}
Note that this is a compact way of writing the rate \eqref{eq: minimax rates -- Gaussian location model}. 
Given $ n $ i.i.d.\ samples $ X_1^n $ from model \eqref{eqn:glm-model}, the estimator  
\begin{align}
\hat{\theta} = \frac{1}{n}\sum_{i=1}^nX_i \label{eqn:glm-estimator}
\end{align}
achieves the rate \eqref{eqn:glm-rate}. 
For the Gaussian mixture model $ X = S\theta_* + Z $ with $ S\sim\text{Unif}\{-1,1\} $ and $ Z\sim N(0,I_d) $, the minimax rate \eqref{eq: minimax rates Gaussian mixture low dimension} \citep[Appendix B]{wu2019EM} can be compactly written as
\begin{align}
\M_{\text{GMM}}(n,d,t) &\asymp \left[\frac{1}{t}\left(\sqrt{\frac{d}{n}} + \frac{d}{n}\right) + \sqrt{\frac{d}{n}}\right] \wedge t . \label{eqn:gmm-rate}
\end{align}
The above rate is attained by the estimator 
\begin{align}
\hat{\theta} = \sqrt{(\lambda_{\max}(\hat{\Sigma}) - 1)_+}\cdot v_{\max}(\hat{\Sigma}) \label{eqn:gmm-estimator}
\end{align}
where $ \hat{\Sigma} = \frac{1}{n}\sum_{i=1}^nX_iX_i^\top $. 
More generally, we can also consider the GMM 
\begin{align}
\tilde{X} = S\theta_* + \tilde{Z} \label{eqn:gmm-general}
\end{align}
with $ S\sim\text{Unif}\{-1,1\} $ and $ \tilde{Z}\sim N(0,\sigma^2\cdot I_d) $, for some $ \sigma>0 $, which is equivalent in distribution to $ \tilde{X} = \sigma(S\theta_*/\sigma + Z) $ where $ Z\sim N(0,I_d) $. 
Generalizing \eqref{eqn:gmm-rate}, it is straightforward to check that the optimal minimax rate for model \eqref{eqn:gmm-general} is given by
\begin{align}
\tilde{\M}_{\text{GMM}}(n,d,t,\sigma) &\asymp \sigma \left[ \left(\frac{1}{t/\sigma}\left(\sqrt{\frac{d}{n}} + \frac{d}{n}\right) + \sqrt{\frac{d}{n}}\right) \wedge \frac{t}{\sigma} \right] \\
&= \left[ \frac{\sigma^2}{t}\left(\sqrt{\frac{d}{n}} + \frac{d}{n}\right) + \sigma\sqrt{\frac{d}{n}} \right] \wedge t . \label{eqn:rate-gmm-variance}
\end{align}
Now, for the HMM at hand, the rationale is that the original model \eqref{eq: Gaussian Markov model} is equivalent to 
$\tilde{X} = \overline{S}\cdot\overline{X}$, 
where $ \overline{S}\sim\text{Unif}\{-1,1\} $ is the coherent sign of a block of $\frac{1}{\delta}$ raw samples from model \eqref{eq: Gaussian Markov model}, 
$ \overline{X} = \frac{1}{1/\delta} \sum_{i=1}^{1/\delta} X_i $ represents the block-sample obtained by applying the estimator \eqref{eqn:glm-estimator} to a block of $ \frac{1}{\delta} $ raw samples, hence, 
$\overline{X} \eqd \theta_* + N(0,\delta\cdot I_d) $. 
Furthermore, the signs across different blocks are essentially independent. 
Therefore
\begin{align}
\tilde{X} \eqd \overline{S}\theta_* + \overline{Z} \label{eqn:hmm-effective}
\end{align}
where $ \overline{Z}\sim N(0,\delta\cdot I_d) $, and we have $ \frac{n}{1/\delta} = n\delta $ i.i.d.\ block-samples from model \eqref{eqn:hmm-effective}. 
According to \eqref{eqn:rate-gmm-variance}, the optimal minimax rate of the HMM \eqref{eq: Gaussian Markov model} should be 
\begin{align}
\M_{\text{HMM}}(n,d,\delta,t) &\asymp \tilde{\M}_{\text{GMM}}(n\delta,d,t,\sqrt{\delta}) \\
&\asymp \left[ \frac{\delta}{t}\left(\sqrt{\frac{d}{n\delta}} + \frac{d}{n\delta}\right) + \sqrt{\delta}\sqrt{\frac{d}{n\delta}} \right] \wedge t \\
&= \left[ \frac{1}{t}\left(\sqrt{\frac{d\delta}{n}} + \frac{d}{n}\right) + \sqrt{\frac{d}{n}} \right] \wedge t . \label{eqn:hmm-rate-heuristic}
\end{align}
Evaluating \eqref{eqn:hmm-rate-heuristic} immediately yields \eqref{eq: minimax loss at low dimension Markov contribution statement}. 

The optimal minimax error rates for GLM (cf.\ \eqref{eq: minimax rates -- Gaussian location model}), GMM (cf.\ \eqref{eq: minimax rates Gaussian mixture low dimension}, originally proved in \citep[Appendix B]{wu2019EM}) and HMM (cf.\ \eqref{eq: minimax loss at low dimension Markov contribution statement}, implied by our results Theorems \ref{thm: mean estimation known delta upper bound} and \ref{thm: impossibility lower bound for mean estimation}) are plotted in Figure \ref{fig:rate}. 
\begin{figure}[htbp]
  \centering
  \includegraphics[width=0.45\textwidth]{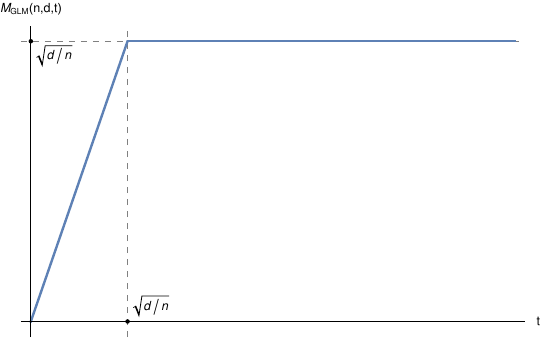} \;
  \includegraphics[width=0.45\textwidth]{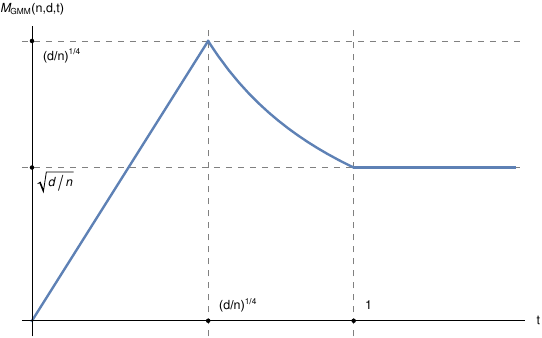} \\ 
  \includegraphics[width=0.45\textwidth]{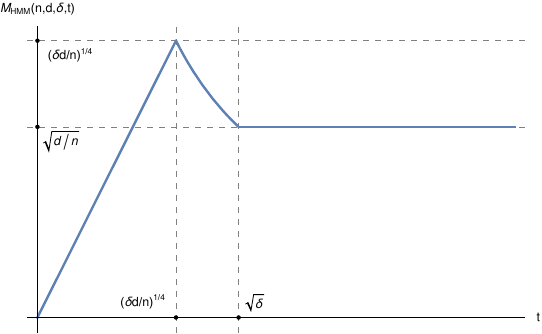} \;
  \includegraphics[width=0.45\textwidth]{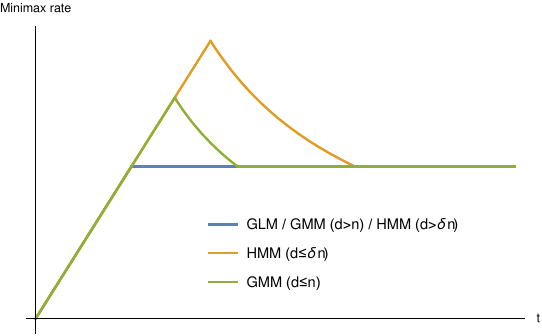} 
  \caption{Plots of minimax rates of the Gaussian Location Model, the Gaussian Mixture Model and the Hidden Markov Model. 
  The top left, top right and bottom left figures show the scaling of the minimax rates of GLM (cf.\ \eqref{eq: minimax rates -- Gaussian location model}), GMM with $ d\le n $ (cf.\ \eqref{eq: minimax rates Gaussian mixture low dimension}) and HMM with $ d\le \delta n $ (cf.\ \eqref{eq: minimax loss at low dimension Markov contribution statement}), respectively. 
  All the above error rates are plotted in the bottom right figure in which one can clearly see how the rate varies as $\delta$  varies. }
  \label{fig:rate}
\end{figure}

\section{Proofs for Section \ref{sec:Mean-estimation-for}: Mean estimation
for known $\delta$ \label{sec:Proofs for Mean estimation}}

\subsection{Proof of Theorem \ref{thm: mean estimation known delta upper bound}:
Analysis of the estimator}
\label{app:proof-achievability-estimate-theta}

Our proposed procedure in Theorem \ref{thm: mean estimation known delta upper bound} for $ \theta_* $ estimation with a known flip probability $ \delta $ is depicted in Figure \ref{fig:block}. 
\begin{figure}[htbp]
  \centering
  \includegraphics[width=0.95\textwidth]{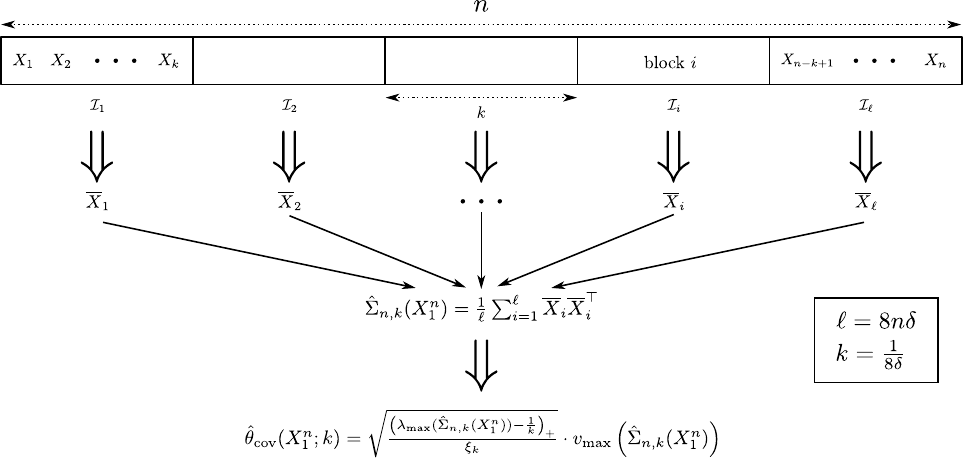}
  \caption{A visual illustration of the construction of the estimator $ \hat{\theta}_{\text{cov}}(X_1^n;k) $ in \eqref{eq: general PCA estimation rule} given $n$ samples $ X_1^n $ from the model \eqref{eq: Gaussian Markov model} with a known flip probability $ \delta $. The block size $k$ is chosen to be $ \frac{1}{8\delta} $. }
  \label{fig:block}
\end{figure}

\begin{rem}
\label{rk:remark-delta-at-least-1-over-n}
The condition $\delta\geq\frac{1}{n}$ in Theorem \ref{thm: mean estimation known delta upper bound}
is inconsequential. Indeed, if $\delta\leq\frac{1}{n}$, then also
$d\geq\delta n$ and we are at the high dimension regime. In that
case, the estimator can artificially increase the flip probability
(by sign randomization) to $\frac{1}{n}$, which results the local
minimax rate of the GLM, that is, $\M(n,d,\delta,t)\asymp\M_{\text{GLM}}(n,d,t)$,
which is optimal.
We therefore assume $ \delta\le\frac{1}{n} $ throughout the proof. 
\end{rem}

To begin with the analysis of the estimator in Figure \ref{fig:block}, the following lemma is a simple, yet key tool for the proof. It
establishes the variance of the random gain $\overline{S}$. The proof
relies on a sort of \emph{self-bounding} property (cf.\ \citep[Chapter 3.3]{boucheron2013concentration}), which stems from
the fact that $|\overline{S}|\leq1$. 
\begin{lem}
\label{lem:analysis of the random gain}It holds that $0\leq\overline{S}^{2}\leq1$
(with probability $1$) and that 
\[
\V(\overline{S}^{2})\leq\E[1-\overline{S}^{2}]\leq4\delta k.
\]
\end{lem}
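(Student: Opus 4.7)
The first claim $0\le\overline{S}^2\le 1$ is immediate: $\overline{S}$ is an average of $\pm 1$-valued random variables, so $|\overline{S}|\le 1$ by the triangle inequality.

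For the first variance bound, I will exploit the self-bounding structure. Since $\overline{S}^2\in[0,1]$, we have $\overline{S}^4\le\overline{S}^2$, and therefore
\[
\V(\overline{S}^2)=\E[\overline{S}^4]-(\E[\overline{S}^2])^2\le\E[\overline{S}^2]-(\E[\overline{S}^2])^2=\E[\overline{S}^2]\bigl(1-\E[\overline{S}^2]\bigr)\le 1-\E[\overline{S}^2]=\E[1-\overline{S}^2],
\]
using $\E[\overline{S}^2]\le 1$ in the final step.

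It then remains to show $\E[1-\overline{S}^2]\le 4\delta k$. The plan is a direct computation using the pairwise correlations of the Markov chain. Since $\E[S_iS_j]=\rho^{|i-j|}$ with $\rho=1-2\delta\in[0,1]$ (assuming WLOG $\delta\le 1/2$ as in Section \ref{sec:Mean-estimation-for}), expanding the square gives
\[
\E[\overline{S}^2]=\frac{1}{k^2}\sum_{i,j=1}^k\rho^{|i-j|}=\frac{1}{k^2}\Bigl[k+2\sum_{m=1}^{k-1}(k-m)\rho^m\Bigr].
\]
Using $k(k-1)=2\sum_{m=1}^{k-1}(k-m)$ this can be rearranged to
\[
1-\E[\overline{S}^2]=\frac{2}{k^2}\sum_{m=1}^{k-1}(k-m)(1-\rho^m).
\]
The key elementary inequality is $1-\rho^m=(1-\rho)\sum_{j=0}^{m-1}\rho^j\le m(1-\rho)=2m\delta$, which holds because $\rho\in[0,1]$. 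Substituting and using $\sum_{m=1}^{k-1}m(k-m)=\frac{k(k-1)(k+1)}{6}\le\frac{k^3}{6}$ yields
\[
1-\E[\overline{S}^2]\le\frac{4\delta}{k^2}\cdot\frac{k^3}{6}=\frac{2\delta k}{3}\le 4\delta k,
\]
completing the proof.

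The only mildly subtle point is the self-bounding step, which turns what would otherwise require a fourth-moment computation for $\overline{S}$ (a nontrivial object under the Markov law) into a trivial consequence of boundedness; after that, everything reduces to the standard geometric-sum manipulation for Markov chain autocorrelations.
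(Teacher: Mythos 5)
Your proof is correct. The variance step is essentially the paper's self-bounding observation in a mildly different guise: the paper writes $\V(\overline{S}^{2})=\V(1-\overline{S}^{2})\leq\E[(1-\overline{S}^{2})^{2}]\leq\E[1-\overline{S}^{2}]$, while you use $\overline{S}^{4}\leq\overline{S}^{2}$; both hinge only on $0\leq\overline{S}^{2}\leq1$. Where you genuinely diverge is the bound on $\E[1-\overline{S}^{2}]$. The paper conditions on $S_{0}=1$ (by symmetry), factors $1-\overline{S}^{2}=(1-\overline{S})(1+\overline{S})\leq2(1-\overline{S})$ to reduce to a first-moment computation, and then uses $\E[S_{j}\mid S_{0}=1]=\rho^{j}$ together with Bernoulli's inequality $(1-2\delta)^{j}\geq1-2j\delta$ to get $\E[1-\overline{S}^{2}]\leq2\delta(k+1)\leq4\delta k$. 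You instead compute the second moment exactly through the pairwise autocorrelations $\E[S_{i}S_{j}]=\rho^{|i-j|}$, obtaining the identity $1-\E[\overline{S}^{2}]=\frac{2}{k^{2}}\sum_{m=1}^{k-1}(k-m)(1-\rho^{m})$, and then bound $1-\rho^{m}\leq2m\delta$. This buys a sharper constant, $\frac{2}{3}\delta k$ rather than the paper's $2\delta(k+1)$, and avoids the conditioning/linearization trick entirely, at the price of a slightly heavier (though still elementary) combinatorial sum; the paper's route is shorter and stays at the level of first moments, which is all it needs since only the order $\delta k$ matters downstream (e.g., to conclude $\xi_{k}\geq\frac{1}{2}$ for $k=\frac{1}{8\delta}$). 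Both arguments use the standing reduction $\delta\in[0,\tfrac{1}{2}]$, i.e., $\rho\in[0,1]$, which you correctly flag.
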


\begin{proof}
Trivially $0\leq\overline{S}^{2}\leq1$ and so the variance is bounded
as
\[
\V(\overline{S}^{2})=\V(1-\overline{S}^{2})\leq\E[(1-\overline{S}^{2})^{2}]\leq\E[1-\overline{S}^{2}].
\]
The expected value is then upper bounded as 
\begin{align}
\E[1-\overline{S}^{2}] & =\E\left[1-\left(\frac{1}{k}\sum_{j=1}^{k}S_{j}\right)^{2}\right]\\
 & =\E\left[1-\left(\frac{1}{k}\sum_{j=1}^{k}S_{j}\right)^{2}\,\middle|\, S_{0}=1\right]\\
 & =\E\left[\left(1-\left(\frac{1}{k}\sum_{j=1}^{k}S_{j}\right)\right)\left(1+\left(\frac{1}{k}\sum_{j=1}^{k}S_{j}\right)\right)\,\middle|\, S_{0}=1\right]\\
 & \leq2\cdot\E\left[1-\left(\frac{1}{k}\sum_{j=1}^{k}S_{j}\right)\,\middle|\, S_{0}=1\right]\\
 & =2-\frac{2}{k}\sum_{j=1}^{k}\E\left[S_{j}\,\middle|\, S_{0}=1\right]\\
 & \trre[=,a]2-\frac{2}{k}\sum_{j=1}^{k}\rho^{j}\\
 & =2-\frac{2}{k}\sum_{j=1}^{k}(1-2\delta)^{j}\\
 & \trre[\leq,b]2-\frac{2}{k}\sum_{j=1}^{k}(1-2j\delta)\\
 & =\frac{4\delta}{k}\sum_{j=1}^{k}j\\
 & =\frac{2\delta(k^{2}+k)}{k}\\
 & \leq4\delta k,\label{eq: bound on expected one minus gain}
\end{align}
where $(a)$ follows from 
\begin{align}
 & \E[S_{j}\mid S_{0}=1]\nonumber \\
 & =\P[S_{j-1}=1\mid S_{0}=1]\E[S_{j}\mid S_{0}=1,S_{j-1}=1]\nonumber \\
 & \hphantom{==}+\P[S_{j-1}=-1\mid S_{0}=1]\E[S_{j}\mid S_{0}=1,S_{j-1}=-1]\\
 & =\P[S_{j-1}=1\mid S_{0}=1]\E[S_{j}\mid S_{j-1}=1]+\P[S_{j-1}=-1\mid S_{0}=1]\E[S_{j}\mid S_{j-1}=-1]\\
 & =\P[S_{j-1}=1\mid S_{0}=1]\left(\frac{1+\rho}{2}-\frac{1-\rho}{2}\right)+\P[S_{j-1}=-1\mid S_{0}=1]\left(-\frac{1+\rho}{2}+\frac{1-\rho}{2}\right)\\
 & =\P[S_{j-1}=1\mid S_{0}=1]\cdot\rho-\P[S_{j-1}=-1\mid S_{0}=1]\cdot\rho\\
 & =\rho\cdot\E[S_{j-1}\mid S_{0}=1]\\
 & =\cdots=\rho^{j},
\end{align}
$(b)$ follows from Bernoulli's inequality $(1-x)^{r}\geq1-rx$ for
$x\in[0,1]$ and $r\geq1$. 
\end{proof}
The next lemma summarizes concentration results and bounds on the
expected value of various empirical quantities needed for the rest
of the analysis. For a sequence of samples $(S_{1}^{n},Z_{1}^{n})$,
let us denote the following events: 
\[
{\cal E}_{n,\delta,k}^{'}\dfn\left\{ S_{1}^{n}\colon\left|\frac{1}{\ell}\sum_{i=1}^{\ell}\overline{S}_{i}^{2}-\xi_{k}\right|\leq5\sqrt{\frac{\delta k^{2}\log(n)}{n}}\right\} ,
\]

\[
{\cal E}_{n,d,k}^{''}\dfn\left\{ (S_{1}^{n},Z_{1}^{n})\colon\left\Vert \frac{1}{\ell}\sum_{i=1}^{\ell}\overline{S}_{i}\overline{Z}_{i}\right\Vert \leq7\sqrt{\frac{d}{n}}\right\} ,
\]

\[
{\cal E}_{n,d,k}^{'''}\dfn\left\{ Z_{1}^{n}\colon\left\Vert \frac{1}{\ell}\sum_{i=1}^{\ell}\overline{Z}_{i}\overline{Z}_{i}^{\T}-\frac{1}{k}\cdot I_{d}\right\Vert _{\text{op}}\leq4\sqrt{\frac{d}{kn}}+\frac{4d}{n}\right\} , 
\]
and let 
\[
{\cal E}_{n,d,\delta,k}\dfn{\cal E}_{n,\delta,k}^{'}\cap{\cal E}_{n,d,k}^{''}\cap{\cal E}_{n,d,k}^{'''}.
\]
 Note that these events depend on the choice of the block length $k$. 
\begin{lem}
\label{lem:concentration of empirical terms}
Assume that $\delta\geq\frac{1}{n}$
and $d\leq n$. 
It holds that
$\P[{\cal E}_{n,d,\delta,k}]\geq1-\frac{5}{n}$ and 
\begin{equation}
\E\left[\left|\frac{1}{\ell}\sum_{i=1}^{\ell}\overline{S}_{i}^{2}-\xi_{k}\right|\right]\leq2\sqrt{\frac{\delta k^{2}}{n}},\label{eq: expectation of gain average over all blocks}
\end{equation}
\begin{equation}
\E\left[\left\Vert \frac{1}{\ell}\sum_{i=1}^{\ell}\overline{S}_{i}\overline{Z}_{i}\right\Vert \right]\leq\sqrt{\frac{d}{n}},\label{eq: expectation of gain times noise over all blocks}
\end{equation}
and 
\begin{equation}
\E\left[\left\Vert \frac{1}{\ell}\sum_{i=1}^{\ell}\overline{Z}_{i}\overline{Z}_{i}^{\T}-\frac{1}{k}\cdot I_{d}\right\Vert _{\text{op}}\right]\leq13\sqrt{\frac{d}{nk}}+10\frac{d}{n}.\label{eq: expectation of noise square over all blocks}
\end{equation}
\end{lem}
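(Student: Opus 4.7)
The plan is to handle the three expectation bounds via Jensen's inequality applied to suitable second moments, and the three probability statements via separate concentration inequalities combined with a union bound. Three structural observations drive everything: (i) after block averaging and Rademacher sign randomization, the families $\{\overline{S}_i\}_{i\in[\ell]}$, $\{\overline{Z}_i\}_{i\in[\ell]}$, and hence $\{\overline{S}_i\overline{Z}_i\}_{i\in[\ell]}$ and $\{\overline{Z}_i\overline{Z}_i^{\T}\}_{i\in[\ell]}$, are i.i.d.\ across blocks; (ii) Lemma~\ref{lem:analysis of the random gain} controls $\V(\overline{S}^2)$ by $4\delta k$ and gives $|\overline{S}|\le 1$; and (iii) $\overline{Z}_i\sim N(0,\tfrac{1}{k}I_d)$, so standard Gaussian concentration applies conditionally on $S_1^n$.

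For the expectation bounds, Jensen reduces matters to second moments. The first bound follows from
\[
\E\left|\tfrac{1}{\ell}\sum_i \overline{S}_i^2-\xi_k\right|\le \sqrt{\tfrac{\V(\overline{S}^2)}{\ell}}\le \sqrt{\tfrac{4\delta k}{\ell}}=2\sqrt{\tfrac{\delta k^2}{n}},
\]
using $k\ell=n$ and Lemma~\ref{lem:analysis of the random gain}. For the second, I would condition on $S_1^n$ so that $\tfrac{1}{\ell}\sum_i \overline{S}_i\overline{Z}_i$ is a centered Gaussian with conditional covariance $\tfrac{\sum_i \overline{S}_i^2}{k\ell^2}I_d$, then apply Jensen and take an outer expectation using $\E[\overline{S}^2]=\xi_k\le 1$ to obtain $\E\|\cdot\|\le\sqrt{d\xi_k/n}\le\sqrt{d/n}$. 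For the third, I would set $Y_i\dfn\sqrt{k}\,\overline{Z}_i\sim N(0,I_d)$ i.i.d., reducing the object to $\tfrac{1}{k}\bigl(\tfrac{1}{\ell}\sum_i Y_iY_i^{\T}-I_d\bigr)$; a standard non-asymptotic bound on Gaussian sample covariance in operator norm (via matrix Bernstein or an $\varepsilon$-net argument on $\mathbb{S}^{d-1}$) yields $\E\|\tfrac{1}{\ell}\sum_i Y_iY_i^{\T}-I_d\|_{\text{op}}\lesssim \sqrt{d/\ell}+d/\ell$, and multiplying by $1/k$ (with $k\ell=n$) produces $\sqrt{d/(kn)}+d/n$ up to explicit constants.

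For the high-probability statements the same three attacks sharpen to tail bounds. For ${\cal E}'_{n,\delta,k}$, Bernstein's inequality applied to the i.i.d.\ summands $\overline{S}_i^2\in[0,1]$ with per-sample variance $\le 4\delta k$ yields the stated envelope $5\sqrt{\delta k^2\log(n)/n}$ at confidence $1-2/n$. For ${\cal E}''_{n,d,k}$, conditioning on $S_1^n$ makes $\tfrac{1}{\ell}\sum_i \overline{S}_i\overline{Z}_i$ Gaussian with conditional variance at most $1/n$ (using $\overline{S}_i^2\le 1$), and the Gaussian norm concentration $\|g\|\le\sigma(\sqrt{d}+\sqrt{2\log n})$ gives the claim at confidence $1-2/n$ after absorbing the additive $\sqrt{\log n/n}$ term into $7\sqrt{d/n}$ under the ambient assumption $d\ge 1$. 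Finally, ${\cal E}'''_{n,d,k}$ follows from an explicit-constant version of the Gaussian covariance deviation bound, again at confidence $1-1/n$. A union bound then delivers $\P({\cal E}_{n,d,\delta,k})\ge 1-5/n$.

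The main difficulty is not conceptual but bookkeeping: one must carry the numerical constants $5$, $7$, $4$ through sharp forms of Bernstein's inequality, Gaussian norm concentration, and Gaussian covariance deviation, and ensure that the $\log n$ factors arising from sub-exponential tails are either retained in the envelope (as in ${\cal E}'_{n,\delta,k}$) or absorbed into $\sqrt{d/n}$ and $d/n$ under the ambient assumption $d\le n$. Achieving this precision may require invoking specific off-the-shelf inequalities with explicit constants rather than generic order-of-magnitude estimates, but the overall architecture of the proof is a routine application of i.i.d.\ concentration built on the independence structure across blocks.
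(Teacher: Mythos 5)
Your proposal follows essentially the same route as the paper: Jensen/second-moment bounds for the three expectations (using $\V(\overline{S}^2)\le 4\delta k$ from Lemma \ref{lem:analysis of the random gain}, conditional Gaussianity of $\frac{1}{\ell}\sum_i\overline{S}_i\overline{Z}_i$ with variance $\le 1/n$, and a Gaussian sample-covariance bound), and Bernstein plus Gaussian norm/covariance concentration plus a union bound for $\P[{\cal E}_{n,d,\delta,k}]\ge 1-\frac{5}{n}$. Two bookkeeping caveats: the absorption of the $\sqrt{\log(n)/n}$ term into $7\sqrt{d/n}$ (and the analogous step for ${\cal E}'''_{n,d,k}$) does \emph{not} hold under merely $d\ge1$ as you claim; it requires $d\gtrsim\log n$, and indeed the paper invokes $d\ge 2\log(n)$ at exactly this point. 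Also, for \eqref{eq: expectation of noise square over all blocks} the paper does not cite an order-of-magnitude expectation bound but integrates the tail of the Wainwright covariance deviation inequality to extract the explicit constants $13$ and $10$, which your ``up to explicit constants'' step would still need to supply.
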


\begin{proof}
We analyze the expected value and concentration of each of the three
terms. 

First, the random variable $\overline{S}_{i}^{2}-\xi_{k}$ has zero
mean, it is bounded in $[-1,1]$, and its variance is bounded by $4\delta k$
according to Lemma \ref{lem:analysis of the random gain}. Since $\{\overline{S}_{i}^{2}-\xi_{k}\}_{i=1}^{\ell}$ are
i.i.d., Bernstein's inequality for bounded distributions implies that
(e.g., \eqref{eqn:bernstein} from \citet[Proposition 2.14]{wainwright2019high}, by setting in the
notation therein $X_{i}=\overline{S}_{i}^{2}-\xi_{k}$ which is a
zero-mean random variable bounded in $[-1,1]$)
\[
\P\left[\left|\frac{1}{\ell}\sum_{i=1}^{\ell}\overline{S}_{i}^{2}-\xi_{k}\right|\geq t\right]\leq2\exp\left(-\frac{\ell^{2}t^{2}}{4\ell\delta k+\ell t/3}\right).
\]
Requiring that the r.h.s. is at most $\frac{2}{n}$, and using $n=\ell k$,
this implies that 
\[
\left|\frac{1}{\ell}\sum_{i=1}^{\ell}\overline{S}_{i}^{2}-\xi_{k}\right|\leq\sqrt{\frac{8\delta k^{2}\log(n)}{n}}+\frac{\frac{3}{2}k\log(n)}{n}
\]
with probability $1-\frac{2}{n}$. Under the assumption $\delta\geq\frac{1}{n}$,
we may further upper bound
\begin{align}
\sqrt{\frac{8\delta k^{2}\log(n)}{n}}+\frac{\frac{3}{2}k\log(n)}{n} & \leq\log(n)\cdot\left[\sqrt{\frac{8\delta k^{2}}{n}}+\frac{\frac{3}{2}k}{n}\right]\\
 & \leq\log(n)\cdot\left[\sqrt{\frac{8\delta k^{2}}{n}}+\frac{3}{2}\sqrt{\frac{\delta k^{2}}{n}}\right]\\
 & \leq5\log(n)\cdot\sqrt{\frac{\delta k^{2}}{n}},
\end{align}
and this implies $\P[{\cal E}_{n,\delta,k}^{'}]\geq1-\frac{2}{n}$.
In addition, since $\overline{S}_{i}^{2}-\xi_{k}$ are zero mean i.i.d.,
Lemma \ref{lem:analysis of the random gain} implies that 
\[
\E\left[\left|\frac{1}{\ell}\sum_{i=1}^{\ell}\overline{S}_{i}^{2}-\xi_{k}\right|\right]\leq\sqrt{\V\left[\frac{1}{\ell}\sum_{i=1}^{\ell}\overline{S}_{i}^{2}-\xi_{k}\right]}\leq\sqrt{\frac{4\delta k}{\ell}}=2\sqrt{\frac{\delta k^{2}}{n}}
\]
which proves (\ref{eq: expectation of gain average over all blocks}). 

Second, conditioned on any fixed $\{\overline{S}_{i}\}_{i=1}^{\ell}$,
it holds that $\overline{S}_{i}\overline{Z}_{i}\sim N(0,\frac{\overline{S}_{i}^{2}}{k}I_{d})$.
Hence, $\frac{1}{\ell}\sum_{i=1}^{\ell}\overline{S}_{i}\overline{Z}_{i}\sim N(0,\sigma^{2}\cdot I_{d})$
with 
\[
\sigma^{2}\dfn\frac{1}{k\ell^{2}}\sum_{i=1}^{\ell}\overline{S}_{i}^{2}\leq\frac{1}{k\ell}=\frac{1}{n}.
\]
So, by standard concentration of 
norm of Gaussian (or, more generally
subGaussian) random vectors \eqref{eqn:norm-subgaussian} from \citep[Theorem 1.19]{rigollet2019high}
(or a degenerate case of \citep[Example 6.2]{wainwright2019high}),
it holds with probability $1-\epsilon$ that 
\[
\left\Vert \frac{1}{\ell}\sum_{i=1}^{\ell}\overline{S}_{i}\overline{Z}_{i}\right\Vert \leq4\sigma\sqrt{d}+2\sigma\sqrt{2\log\left(\frac{1}{\epsilon}\right)}\leq4\sqrt{\frac{d}{n}}+\sqrt{\frac{8}{n}\log\left(\frac{1}{\epsilon}\right)}.
\]
Since the r.h.s. does not depend on $\{\overline{S}_{i}\}_{i=1}^{\ell}$,
the same bound holds unconditionally. Setting $\epsilon=\frac{1}{n}$
and further upper bounding 
\[
4\sqrt{\frac{d}{n}}+\sqrt{\frac{8}{n}\log(n)}\leq\sqrt{\frac{32d+16\log(n)}{n}}\leq7\sqrt{\frac{d}{n}},
\]
using $\sqrt{a}+\sqrt{b}\leq\sqrt{2(a+b)}$ and $d\geq2\log(n)$,
results $\P[{\cal E}_{n,d,k}^{''}]\geq1-\frac{1}{n}$. In addition,
since $\{\overline{S}_{i}\overline{Z}_{i}\}_{i=1}^{\ell}$ are i.i.d.,
it holds that 
\[
\E\left[\left\Vert \frac{1}{\ell}\sum_{i=1}^{\ell}\overline{S}_{i}\overline{Z}_{i}\right\Vert \right]\leq\sqrt{\E\left[\left\Vert \frac{1}{\ell}\sum_{i=1}^{\ell}\overline{S}_{i}\overline{Z}_{i}\right\Vert ^{2}\right]}=\sqrt{d\sigma^{2}}\leq\sqrt{\frac{d}{n}}
\]
which proves (\ref{eq: expectation of gain times noise over all blocks}). 

Third, by Gaussian covariance estimation from \citep[Example 6.2]{wainwright2019high}, it holds for any
$\eta>0$ that 
\[
V\dfn k\left\Vert \frac{1}{\ell}\sum_{i=1}^{\ell}\overline{Z}_{i}\overline{Z}_{i}^{\T}-\frac{1}{k}I_{d}\right\Vert _{\text{op}}\leq\left[2\sqrt{\frac{d}{\ell}}+2\eta+\left(\sqrt{\frac{d}{\ell}}+\eta\right)^{2}\right]
\]
with probability larger than $1-2e^{-\ell\eta^{2}/2}$. 

We begin with a high probability event. Setting $\epsilon=2e^{-\ell\eta^{2}/2}$,
it holds with probability larger than $1-\epsilon$ that 
\begin{align}
\left\Vert \frac{1}{\ell}\sum_{i=1}^{\ell}\overline{Z}_{i}\overline{Z}_{i}^{\T}-\frac{1}{k}\cdot I_{d}\right\Vert _{\text{op}} & \leq2\sqrt{\frac{d}{k^{2}\ell}}+\sqrt{\frac{8\log\left(\frac{2}{\epsilon}\right)}{k^{2}\ell}}+\left(\sqrt{\frac{d}{k\ell}}+\sqrt{\frac{2\log\left(\frac{2}{\epsilon}\right)}{k\ell}}\right)^{2}\\
 & =2\sqrt{\frac{d}{kn}}+\sqrt{\frac{8\log\left(\frac{2}{\epsilon}\right)}{kn}}+\left(\sqrt{\frac{d}{n}}+\sqrt{\frac{2\log\left(\frac{2}{\epsilon}\right)}{n}}\right)^{2}\\
 & \trre[\leq,a]2\sqrt{\frac{d}{kn}}+\sqrt{\frac{8\log\left(\frac{2}{\epsilon}\right)}{kn}}+\frac{2d+4\log\left(\frac{2}{\epsilon}\right)}{n},
\end{align}
where $(a)$ follows from $(a+b)^{2}\leq2a^{2}+2b^{2}$. Setting $\epsilon=\frac{2}{n}$
and further upper bounding 
\begin{align}
2\sqrt{\frac{d}{kn}}+\sqrt{\frac{8\log(n)}{kn}}+\frac{2d+4\log(n)}{n} & \leq\sqrt{\frac{8d+16\log(n)}{kn}}+\frac{2d+4\log(n)}{n}\\
 & \leq4\sqrt{\frac{d}{kn}}+\frac{4d}{n},
\end{align}
using $\sqrt{a}+\sqrt{b}\leq\sqrt{2(a+b)}$ and $d\geq2\log(n)$, results
$\P[{\cal E}_{n,d,k}^{'''}]\geq1-\frac{2}{n}$. For the bound on the
expectation, we further upper bound 
\[
V\leq2\sqrt{\frac{d}{\ell}}+2\eta+\frac{2d}{\ell}+2\eta^{2}.
\]
Let $\alpha=2\sqrt{\frac{d}{\ell}}+2\frac{d}{\ell}$. Then, $U\dfn V-\alpha$
satisfies that $\P\left[U\geq2\eta+2\eta^{2}\right]\leq2e^{-\ell\eta^{2}/2}$.
Then, 
\begin{align}
\E\left[U\right] & =\E\left[U\cdot\I\{U<0\}\right]+\E\left[U\cdot\I\{U\geq0\}\right]\\
 & \leq\E\left[U\cdot\I\{U\geq0\}\right]\\
 & =\int_{0}^{\infty}\P\left[U\cdot\I\{U\geq0\}>t\right]\d t\\
 & =\int_{0}^{\infty}\P\left[U>t\right]\d t\\
 & \trre[=,a]\int_{0}^{\infty}\left(2+4\eta\right)\P\left[U>2\eta+2\eta^{2}\right]\d\eta\\
 & \leq\int_{0}^{\infty}\left(2+4\eta\right)2e^{-\ell\eta^{2}/2}\d\eta\\
 & =4\int_{0}^{\infty}e^{-\ell\eta^{2}/2}\d\eta+8\int_{0}^{\infty}\eta e^{-\ell\eta^{2}/2}\d\eta\\
 & \trre[=,b]4\sqrt{\frac{2\pi}{\ell}}+\frac{8}{\ell},
\end{align}
where $(a)$ follows from the change of variables $t=2\eta+2\eta^{2}$,
and $(b)$ follows from Gaussian integration (the first term). Hence,
\[
\E\left[\frac{V}{k}\right]\leq\frac{1}{k}\E[U+\alpha]\leq13\sqrt{\frac{d}{nk}}+10\frac{d}{n}
\]
which proves (\ref{eq: expectation of noise square over all blocks}).
The claim $\P[{\cal E}_{n,d,\delta,k}]\geq1-\frac{5}{n}$ then follows
from the union bound over the three events considered above. 
\end{proof}
The next lemma utilizes Lemma \ref{lem:concentration of empirical terms}
to establish concentration on the maximal eigenvalue and the associated
eigenvector.
\begin{lem}
\label{lem: concentration of eigenvalue and vector}
Let 
\begin{equation}
\psi(n,d,\delta,k)\dfn2\sqrt{\frac{\delta k^{2}}{n}}\cdot\|\theta_{*}\|^{2}+2\sqrt{\frac{d}{n}}\cdot\|\theta_{*}\|+13\sqrt{\frac{d}{nk}}+10\frac{d}{n}.\label{eq: definition of psi}
\end{equation}
Then, it holds that

\[
\E\left[\left|\lambda_{\text{\emph{max}}}\left(\hat{\Sigma}_{n,k}(X_{1}^{n})\right)-\left(\xi_{k}\|\theta_{*}\|^{2}+\frac{1}{k}\right)\right|\right]\leq\psi(n,d,\delta,k)
\]
and
\[
\E\left[\loss\left(v_{\text{\emph{max}}}\left(\hat{\Sigma}_{n,k}(X_{1}^{n})\right),\frac{\theta_{*}}{\|\theta_{*}\|}\right)\right]\leq\frac{8\cdot\psi(n,d,\delta,k)}{\|\theta_{*}\|^{2}}.
\]
If ${\cal E}_{n,d,\delta,k}$ holds then similar bounds hold (with probability $1$) with
$\psi(n,d,\delta,k)$ multiplied by $7\log(n)$. 
\end{lem}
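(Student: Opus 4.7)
The plan is to first bound the operator-norm error $\|\hat{\Sigma}_{n,k}(X_1^n)-\Sigma_{n,k}(\theta_*)\|_{\text{op}}$, and then translate this into bounds on the top eigenvalue via Weyl's inequality and on the top eigenvector via the Davis-Kahan $\sin\Theta$ theorem. Expanding $\overline{X}_i\overline{X}_i^{\T}=\overline{S}_i^{2}\theta_*\theta_*^{\T}+\overline{S}_i(\theta_*\overline{Z}_i^{\T}+\overline{Z}_i\theta_*^{\T})+\overline{Z}_i\overline{Z}_i^{\T}$ and subtracting $\Sigma_{n,k}(\theta_*)=\xi_k\theta_*\theta_*^{\T}+\frac{1}{k}I_d$ yields the decomposition
\[
\hat{\Sigma}_{n,k}-\Sigma_{n,k}=(\bar A-\xi_k)\theta_*\theta_*^{\T}+(\theta_*\bar B^{\T}+\bar B\theta_*^{\T})+\Bigl(\bar C-\tfrac{1}{k}I_d\Bigr),
\]
where $\bar A\dfn\ell^{-1}\sum_i\overline{S}_i^{2}$, $\bar B\dfn\ell^{-1}\sum_i\overline{S}_i\overline{Z}_i$, $\bar C\dfn\ell^{-1}\sum_i\overline{Z}_i\overline{Z}_i^{\T}$. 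The triangle inequality for the operator norm then gives $\|\hat{\Sigma}_{n,k}-\Sigma_{n,k}\|_{\text{op}}\leq|\bar A-\xi_k|\cdot\|\theta_*\|^{2}+2\|\theta_*\|\cdot\|\bar B\|+\|\bar C-\tfrac{1}{k}I_d\|_{\text{op}}$.

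Next, I take expectations and substitute the three bounds \eqref{eq: expectation of gain average over all blocks}, \eqref{eq: expectation of gain times noise over all blocks} and \eqref{eq: expectation of noise square over all blocks} from Lemma \ref{lem:concentration of empirical terms}. This immediately produces $\E[\|\hat{\Sigma}_{n,k}-\Sigma_{n,k}\|_{\text{op}}]\leq\psi(n,d,\delta,k)$. For the eigenvalue bound, Weyl's inequality yields $|\lambda_{\max}(\hat{\Sigma}_{n,k})-\lambda_{\max}(\Sigma_{n,k})|\leq\|\hat{\Sigma}_{n,k}-\Sigma_{n,k}\|_{\text{op}}$, and since $\lambda_{\max}(\Sigma_{n,k})=\xi_k\|\theta_*\|^{2}+\tfrac{1}{k}$, the first claim follows after taking expectations.

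For the eigenvector bound, the key structural observation is that $\Sigma_{n,k}=\xi_k\theta_*\theta_*^{\T}+\tfrac{1}{k}I_d$ has a single top eigenvalue $\xi_k\|\theta_*\|^{2}+\tfrac{1}{k}$ with eigenvector $\theta_*/\|\theta_*\|$, and all remaining eigenvalues equal to $\tfrac{1}{k}$, so its spectral gap is exactly $\xi_k\|\theta_*\|^{2}$. I would then invoke a sign-ambiguous formulation of the Davis-Kahan $\sin\Theta$ theorem (e.g.\ the Yu-Wang-Samworth version) to obtain
\[
\loss\Bigl(v_{\max}(\hat{\Sigma}_{n,k}),\tfrac{\theta_*}{\|\theta_*\|}\Bigr)\leq\frac{2\sqrt{2}\cdot\|\hat{\Sigma}_{n,k}-\Sigma_{n,k}\|_{\text{op}}}{\xi_k\|\theta_*\|^{2}}.
\]
The final ingredient is the lower bound $\xi_k\geq1-4\delta k=\tfrac{1}{2}$, which follows from Lemma \ref{lem:analysis of the random gain} and the choice $k=\tfrac{1}{8\delta}$. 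Substituting this and taking expectations yields the second claim with constant $4\sqrt{2}\leq 8$.

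Finally, the high-probability versions under ${\cal E}_{n,d,\delta,k}$ follow by repeating the same decomposition using the deterministic bounds that define ${\cal E}_{n,\delta,k}'$, ${\cal E}_{n,d,k}''$ and ${\cal E}_{n,d,k}'''$ in place of Lemma \ref{lem:concentration of empirical terms}. A term-by-term comparison shows each of the resulting summands is at most $7\log(n)$ times the corresponding summand in $\psi(n,d,\delta,k)$, which yields the claimed high-probability bounds. The only mild subtlety I foresee is picking the sign-ambiguous variant of Davis-Kahan so that the sign ambiguity in $v_{\max}(\hat{\Sigma}_{n,k})$ is handled directly by the inequality, sparing us from having to choose an orientation by hand; beyond that, the argument is essentially an operator-norm concentration plugged into two standard matrix-perturbation tools.
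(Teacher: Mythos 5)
Your proposal is correct and mirrors the paper's argument: the same operator-norm decomposition into the three terms from Lemma \ref{lem:concentration of empirical terms}, Weyl's inequality for the eigenvalue, Davis-Kahan for the eigenvector with the spectral gap $\xi_k\|\theta_*\|^2$ and $\xi_k\ge\tfrac{1}{2}$ from Lemma \ref{lem:analysis of the random gain} and $k=\tfrac{1}{8\delta}$, and the same term-by-term substitution for the high-probability version. The only cosmetic difference is that you invoke the Yu-Wang-Samworth variant of Davis-Kahan (constant $2\sqrt{2}$) where the paper uses Vershynin's Theorem 4.5.5 (constant $4$), both of which are sign-insensitive and both of which yield the claimed $8\psi/\|\theta_*\|^2$.
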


\begin{proof}
Note that $\lambda_{\text{max}}(\Sigma_{n,k}(\theta_{*}))=\xi_{k}\cdot\|\theta_{*}\|^{2}+\frac{1}{k}$.
Then, 
\begin{align}
 & \left|\lambda_{\text{max}}\left(\hat{\Sigma}_{n,k}(X_{1}^{n})\right)-\left(\xi_{k}\|\theta_{*}\|^{2}+\frac{1}{k}\right)\right|\nonumber \\
 & =\left|\lambda_{\text{max}}\left(\hat{\Sigma}_{n,k}(X_{1}^{n})\right)-\lambda_{\text{max}}\left(\Sigma_{n,k}(\theta_{*})\right)\right|\\
 & \trre[\leq,a]\left\Vert \hat{\Sigma}_{n,k}(X_{1}^{n})-\Sigma_{n,k}(\theta_{*})\right\Vert _{\text{op}}\\
 & =\left\Vert \hat{\Sigma}_{n,k}(X_{1}^{n})-\xi_{k}\theta_{*}\theta_{*}^{\T}-\frac{1}{k}I_{d}\right\Vert _{\text{op}}\\
 & \leq\left|\frac{1}{\ell}\sum_{i=1}^{\ell}\overline{S}_{i}^{2}-\xi_{k}\right|\cdot\|\theta_{*}\|^{2}+\left\Vert \frac{2}{\ell}\sum_{i=1}^{\ell}\overline{S}_{i}\overline{Z}_{i}\right\Vert \cdot\|\theta_{*}\|+\left\Vert \frac{1}{\ell}\sum_{i=1}^{\ell}\overline{Z}_{i}\overline{Z}_{i}^{\T}-\frac{1}{k}\cdot I_{d}\right\Vert _{\text{op}},
\end{align}
where $(a)$ follows from Weyl's inequality. Taking expectation and
using Lemma \ref{lem:concentration of empirical terms} proves the
claimed bound. 

Next, note that $v_{\text{max}}(\Sigma_{n,k}(\theta_{*}))=\frac{\theta_{*}}{\|\theta_{*}\|}$,
its eigenvalue is $\xi_k\|\theta_{*}\|^{2}+\frac{1}{k}$, and that all
the $(d-1$) other eigenvalues of $\Sigma_{n,k}(\theta_{*})$ are
$\frac{1}{k}$. Thus, the eigen-gap of the maximal eigenvalue is
$\xi_k\|\theta_{*}\|^{2}$. Then, by Davis-Kahan's perturbation bound \eqref{eqn:davis-kahan} from \citep[Theorem 4.5.5]{vershynin2018high}
\begin{align*}
\loss\left(v_{\text{max}}\left(\hat{\Sigma}_{n,k}(X_{1}^{n})\right),\frac{\theta_{*}}{\|\theta_{*}\|}\right) & =\loss\left(v_{\text{max}}\left(\hat{\Sigma}_{n,k}(X_{1}^{n})\right),v_{\text{max}}\left(\Sigma_{n,k}(\theta_{*})\right)\right)\\
 & \leq4\frac{\left\Vert \hat{\Sigma}_{n,k}(X_{1}^{n})-\Sigma_{n,k}(\theta_{*})\right\Vert _{\text{op}}}{\xi_k\|\theta_{*}\|^{2}} \\
 &\trre[\le,a] 8\frac{\left\Vert \hat{\Sigma}_{n,k}(X_{1}^{n})-\Sigma_{n,k}(\theta_{*})\right\Vert _{\text{op}}}{\|\theta_{*}\|^{2}},
\end{align*}
where $(a)$ follows since $\xi_{k}=\E[\overline{S}^{2}]\geq1-4k\delta=\frac{1}{2}$
according to Lemma \ref{lem:analysis of the random gain}, and the
choice $k=\frac{1}{8\delta}$. 
The operator norm is upper bounded as for the maximal eigenvalue.
The proof then follows by taking expectation, and utilizing Lemma
\ref{lem:concentration of empirical terms}. 

Finally, assuming the event ${\cal E}_{n,d,\delta,k}$ holds, Lemma
\ref{lem:concentration of empirical terms} implies that the same
bound holds with $7\log(n)\cdot\psi(n,d,\delta,k)$ instead of $\psi(n,d,\delta,k)$. 
\end{proof}
The next lemma upper bounds $\hat{\theta}$ in expectation and in
high probability whenever $\|\theta_{*}\|$ is below the minimax rate
$\beta(n,d,\delta)=\Theta(\sqrt{\frac{d}{n}}\vee(\frac{\delta d}{n})^{1/4})$,
and show that both are (roughly) at most on the scale of $\beta(n,d,\delta)$. 
\begin{lem}
\label{lem:scale estimation}
If $\|\theta_{*}\|\leq\beta(n,d,\delta)$
and $k=\frac{1}{8\delta}$ then 

\begin{equation}
\E\left[\left\Vert \hat{\theta}_{\text{cov}}(X_{1}^{n};k)\right\Vert \right]\leq11\cdot\beta(n,d,\delta),\label{eq: expectation bound in case of low signal}
\end{equation}
and if the event ${\cal E}_{n,d,\delta,k}$ holds then 
\begin{equation}
\left\Vert \hat{\theta}_{\text{cov}}(X_{1}^{n};k)\right\Vert \leq77\log(n)\cdot\beta(n,d,\delta).\label{eq: correct scale in case of low snr}
\end{equation}
\end{lem}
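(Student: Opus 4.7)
The plan is to unwind the definition of $\hat\theta_{\text{cov}}$ in terms of $\lambda_{\max}(\hat\Sigma_{n,k}(X_1^n))$, use the concentration bound from Lemma \ref{lem: concentration of eigenvalue and vector} to replace the empirical maximal eigenvalue with its population value $\xi_k\|\theta_*\|^2+\tfrac1k$, and then verify a case-by-case that each of the four summands defining $\psi(n,d,\delta,k)$ is no larger than $\beta(n,d,\delta)^2$ in the regime $\|\theta_*\|\le\beta(n,d,\delta)$.

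Concretely, by definition of the estimator,
\[
\|\hat\theta_{\text{cov}}(X_1^n;k)\|^2 \;=\; \frac{\bigl(\lambda_{\max}(\hat\Sigma_{n,k}(X_1^n))-\tfrac1k\bigr)_+}{\xi_k}
\;\le\; \frac{\xi_k\|\theta_*\|^2+\bigl|\lambda_{\max}(\hat\Sigma_{n,k}(X_1^n))-\xi_k\|\theta_*\|^2-\tfrac1k\bigr|}{\xi_k}.
\]
Recalling that $\xi_k\ge\tfrac12$ when $k=\tfrac1{8\delta}$ (a fact already used in the proof of Lemma~\ref{lem: concentration of eigenvalue and vector}), applying $\sqrt{a+b}\le\sqrt a+\sqrt b$, and then invoking Jensen's inequality together with the expectation bound of Lemma~\ref{lem: concentration of eigenvalue and vector} yields
\[
\E\bigl[\|\hat\theta_{\text{cov}}(X_1^n;k)\|\bigr]\;\le\; \|\theta_*\|+\sqrt{2\,\psi(n,d,\delta,k)}.
\]
The high-probability branch is identical except that, on ${\cal E}_{n,d,\delta,k}$, the quantity $\psi$ is replaced by $7\log(n)\cdot\psi$.

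Thus it remains to verify that $\sqrt{\psi(n,d,\delta,k)}\lesssim\beta(n,d,\delta)$ in the regime $\|\theta_*\|\le\beta(n,d,\delta)$, with $k=\tfrac1{8\delta}$. With this choice of $k$, the four summands defining $\psi$ become $\tfrac{\|\theta_*\|^2}{4\sqrt{\delta n}}$, $2\sqrt{d/n}\,\|\theta_*\|$, $\sim\sqrt{\delta d/n}$, and $\sim d/n$. I would split on which of the two terms in $\beta(n,d,\delta)=\sqrt{d/n}\vee(\delta d/n)^{1/4}$ is dominant:
\begin{itemize}
\item If $\beta=\sqrt{d/n}$ then $d/n\ge\delta$; the term $d/n$ equals $\beta^2$, the term $\sqrt{\delta d/n}\le d/n$, the term $2\sqrt{d/n}\|\theta_*\|\le 2\beta^2$, and the term $\|\theta_*\|^2/\sqrt{\delta n}\le \beta^2/\sqrt{\delta n}\lesssim\beta^2$ once the assumption $\delta\ge 1/n$ is used.
\item If $\beta=(\delta d/n)^{1/4}$ then $d/n\le\delta$; the term $\sqrt{\delta d/n}$ equals $\beta^2$, the term $d/n\le\sqrt{\delta d/n}=\beta^2$, the term $2\sqrt{d/n}\,\|\theta_*\|\le 2\sqrt\delta\,\beta\le 2\beta^2/\beta\cdot\sqrt\delta$, which is $\le 2\beta^2$ because $\beta^2=\sqrt{\delta d/n}\ge\sqrt{\delta\cdot\delta}=\delta$ fails in general (so more carefully: $2\sqrt{d/n}\,\beta\le 2\sqrt{\delta}\,\beta$ and $\sqrt\delta\cdot\beta\le\beta^2$ iff $\sqrt\delta\le\beta=(\delta d/n)^{1/4}$, i.e., $\delta^2\le\delta d/n$, i.e., $\delta\le d/n$, opposite to our case, so I must instead bound $2\sqrt{d/n}\beta\le 2\sqrt{\delta d/n}\cdot(\beta/\sqrt\delta)\wedge 2\beta^2$)—the cleanest route is simply $2\sqrt{d/n}\,\|\theta_*\|\le\sqrt{d/n}\cdot\|\theta_*\|^2/\beta+\sqrt{d/n}\cdot\beta$, and each piece is $\lesssim\beta^2$; finally the term $\|\theta_*\|^2/\sqrt{\delta n}\le\beta^2/\sqrt{\delta n}\lesssim\beta^2$ again by $\delta\ge 1/n$.
\end{itemize}
Summing the four bounds gives $\psi\lesssim\beta^2$, hence $\sqrt{\psi}\lesssim\beta$, which combined with $\|\theta_*\|\le\beta$ gives \eqref{eq: expectation bound in case of low signal}. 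The high-probability statement \eqref{eq: correct scale in case of low snr} follows by the same computation with $\psi$ replaced by $7\log(n)\psi$, producing an extra factor of $\sqrt{\log n}\le\log n$, which is absorbed in the constant $77$.

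The main obstacle is the term $2\sqrt{d/n}\,\|\theta_*\|$ in the sub-Gaussian regime $\beta=(\delta d/n)^{1/4}$: it is the only summand whose bound requires a mild AM--GM-type splitting rather than a one-line comparison, and I would want to handle it carefully (e.g.\ via $2ab\le a^2/\beta+\beta b^2$ with $a=\sqrt{d/n}$ and $b=\|\theta_*\|$) to avoid losing polynomial factors of $d/(\delta n)$.
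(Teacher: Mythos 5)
Your proposal is correct and follows essentially the same route as the paper: unwind the definition of $\hat{\theta}_{\text{cov}}$, control $\bigl|\lambda_{\max}(\hat{\Sigma}_{n,k}(X_1^n))-\xi_k\|\theta_*\|^2-\tfrac1k\bigr|$ via Lemma \ref{lem: concentration of eigenvalue and vector} (in expectation, and with the extra $7\log n$ factor on ${\cal E}_{n,d,\delta,k}$), plug in $k=\tfrac{1}{8\delta}$, and verify term by term that $\psi(n,d,\delta,k)\lesssim\beta^2(n,d,\delta)$ using $\|\theta_*\|\le\beta$, $\delta\ge\tfrac1n$, and Jensen, exactly as the paper does (it just squares first and takes $\sqrt{\cdot}$ at the end). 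One remark: your concern about the cross term $2\sqrt{d/n}\,\|\theta_*\|$ is unnecessary, since $\sqrt{d/n}\le\beta(n,d,\delta)$ by definition of the maximum, so $2\sqrt{d/n}\,\|\theta_*\|\le2\beta^2$ directly in both cases; your in-bullet split works for the same reason, whereas the parenthetical choice $2ab\le a^2/\beta+\beta b^2$ would not suffice, because $(d/n)/\beta\le\beta^2$ can fail when $\delta^3<d/n\le\delta$.
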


\begin{proof}
It holds that 
\begin{align}
\|\hat{\theta}_{\text{cov}}(X_{1}^{n};k)\|^{2} & =\frac{\lambda_{\text{max}}\left(\hat{\Sigma}_{n,k}(X_{1}^{n})\right)-\frac{1}{k}}{\xi_{k}}\vee0\\
 &\trre[\le,a] 2\left[\lambda_{\text{max}}\left(\hat{\Sigma}_{n,k}(X_{1}^{n})\right)-\frac{1}{k}\right]\vee0\\
 & \le2\left[\lambda_{\text{max}}\left(\hat{\Sigma}_{n,k}(X_{1}^{n})\right)-\xi_{k}\|\theta_{*}\|^{2}-\frac{1}{k}\right]\vee0+2\xi_{k}\|\theta_{*}\|^{2}\\
 & \trre[\leq,b]2\left[\lambda_{\text{max}}\left(\hat{\Sigma}_{n,k}(X_{1}^{n})\right)-\xi_{k}\|\theta_{*}\|^{2}-\frac{1}{k}\right]\vee0+2\|\theta_{*}\|^{2},
\end{align}
where $(a)$ follows since $ \xi_k\ge\frac{1}{2} $ and $ (b) $ follows since $\xi_{k}\leq1$.
Taking expectation of both sides, and utilizing Lemma \ref{lem: concentration of eigenvalue and vector}
results 
\begin{align}
 & \E\left[\|\hat{\theta}_{\text{cov}}(X_{1}^{n};k)\|^{2}\right]\nonumber \\
 & \leq2\cdot\psi(n,d,\delta,k)+2\|\theta_{*}\|^{2}\\
 & \leq4\sqrt{\frac{\delta k^{2}}{n}}\cdot\|\theta_{*}\|^{2}+4\sqrt{\frac{d}{n}}\cdot\|\theta_{*}\|+26\sqrt{\frac{d}{nk}}+20\frac{d}{n}+2\|\theta_{*}\|^{2}\\
 & \trre[=,a]4\sqrt{\frac{1}{64\delta n}}\cdot\|\theta_{*}\|^{2}+4\sqrt{\frac{d}{n}}\cdot\|\theta_{*}\|+26\sqrt{\frac{8\delta d}{n}}+20\frac{d}{n}+2\|\theta_{*}\|^{2}\\
 & \trre[\leq,b]4\sqrt{\frac{1}{64\delta n}}\cdot\left(\frac{2d}{n}+2\sqrt{\frac{\delta d}{n}}\right)+4\sqrt{\frac{d}{n}}\cdot\left(\sqrt{\frac{d}{n}}+\left(\frac{\delta d}{n}\right)^{1/4}\right)\nonumber \\
 & \hphantom{==}+26\sqrt{\frac{8\delta d}{n}}+20\frac{d}{n}+2\left(\frac{2d}{n}+2\sqrt{\frac{\delta d}{n}}\right)\\
 & \leq4\sqrt{\frac{1}{64\delta n}}\cdot\left(\frac{2d}{n}+2\sqrt{\frac{\delta d}{n}}\right)+4\sqrt{\frac{d}{n}}\cdot\left(\frac{\delta d}{n}\right)^{1/4}+78\sqrt{\frac{\delta d}{n}}+28\frac{d}{n}\\
 & =\frac{d}{\sqrt{\delta}n^{3/2}}+\sqrt{\frac{d}{n^{2}}}+4\sqrt{\frac{d}{n}}\cdot\left(\frac{\delta d}{n}\right)^{1/4}+78\sqrt{\frac{\delta d}{n}}+28\frac{d}{n}\\
 & \trre[\leq,c]4\sqrt{\frac{d}{n}}\cdot\left(\frac{\delta d}{n}\right)^{1/4}+78\sqrt{\frac{\delta d}{n}}+30\frac{d}{n}\\
 & \trre[\leq,d]80\sqrt{\frac{\delta d}{n}}+32\frac{d}{n},
\end{align}
where $(a)$ follows by setting $k=\frac{1}{8\delta}$, and $(b)$
follows from the assumption $\|\theta_{*}\|\leq\beta(n,d,\delta)\leq\sqrt{\frac{d}{n}}+\left(\frac{\delta d}{n}\right)^{1/4}$
(and $(a+b)^{2}\leq2a^{2}+2b^{2}$), $(c)$ follows from $\delta\geq\frac{1}{n}$,
so that $\frac{d}{\sqrt{\delta}n^{3/2}}\leq\frac{d}{n}$, and $(d)$
follows from $ab\leq\frac{1}{2}a^{2}+\frac{1}{2}b^{2}$ applied to
the first term.

The proof is then completed by Jensen's inequality 
\begin{align}
\E\left[\|\hat{\theta}_{\text{cov}}(X_{1}^{n};k)\|\right] & \leq\sqrt{\E\left[\|\hat{\theta}_{\text{cov}}(X_{1}^{n};k)\|^{2}\right]}\\
 & \leq\sqrt{80\sqrt{\frac{\delta d}{n}}+32\frac{d}{n}}\\
 & \leq\sqrt{112\cdot\left(\sqrt{\frac{\delta d}{n}}\vee\frac{d}{n}\right)}\\
 & \leq11\cdot\beta(n,d,\delta).
\end{align}

If $\lambda_{\text{max}}(\hat{\Sigma}_{n,k}(X_{1}^{n}))-\frac{1}{k}\leq0$
then $\hat{\theta}_{\text{cov}}(X_{1}^{n};k)=0$ and the claim is trivial.
Otherwise, from the same reasoning as above, assuming that the event
${\cal E}_{n,d,\delta,k}$ holds, it also holds from Lemma \ref{lem: concentration of eigenvalue and vector} that $\|\hat{\theta}_{\text{cov}}(X_{1}^{n};k)\|\leq77\log(n)\cdot\beta(n,d,\delta)$.
\end{proof}
We may now prove Theorem \ref{thm: mean estimation known delta upper bound}. 
\begin{proof}[Proof of Theorem \ref{thm: mean estimation known delta upper bound}.]
 If $\|\theta_{*}\|\leq\beta(n,d,\delta)$ then Lemma \ref{lem:scale estimation}
implies that 
\[
\E\left[\left\Vert \hat{\theta}_{\text{cov}}(X_{1}^{n};k)\right\Vert \right]\leq11\cdot\beta(n,d,\delta).
\]
Otherwise, assume that $\|\theta_{*}\|\geq\beta(n,d,\delta)$. For
any estimator $\tilde{\theta}$ we may write $\tilde{\theta}=\|\tilde{\theta}\|\cdot v$
where $v\in\mathbb{S}^{d-1}$, it holds that 
\begin{align}
\loss(\tilde{\theta},\theta_{*}) & =\left\Vert \|\tilde{\theta}\|\cdot v-\theta_{*}\right\Vert \\
 & =\left\Vert \|\tilde{\theta}\|\cdot v-\|\theta_{*}\|\cdot v+\|\theta_{*}\|\cdot v-\theta_{*}\right\Vert \\
 & \leq\left|\|\tilde{\theta}\|-\|\theta_{*}\|\right|+\|\theta_{*}\|\cdot\loss\left(v,\frac{\theta_{*}}{\|\theta_{*}\|}\right),
\end{align}
where the last inequality follows from the triangle inequality. Specifying
this result to $\tilde{\theta}=\hat{\theta}\equiv\hat{\theta}_{\text{cov}}(X_{1}^{n};k)$,
results 
\begin{equation}
\loss(\hat{\theta},\theta_{*})\leq\left|\|\hat{\theta}\|-\|\theta_{*}\|\right|+\|\theta_{*}\|\cdot\loss\left(v_{\text{max}}\left(\hat{\Sigma}_{n,k}(X_{1}^{n})\right),\frac{\theta_{*}}{\|\theta_{*}\|}\right).\label{eq: loss as norm loss and eigen loss}
\end{equation}
 Lemma \ref{lem: concentration of eigenvalue and vector} implies
that the first term in (\ref{eq: loss as norm loss and eigen loss})
is bounded as
\begin{align}
\left|\|\hat{\theta}\|-\|\theta_{*}\|\right| & =\frac{\left|\|\hat{\theta}\|^{2}-\|\theta_{*}\|^{2}\right|}{\|\hat{\theta}\|+\|\theta_{*}\|}\\
 & \leq\frac{\left|\|\hat{\theta}\|^{2}-\|\theta_{*}\|^{2}\right|}{\|\theta_{*}\|}\\
 & =\left|\frac{\left(\lambda_{\max}(\hat{\Sigma}_{n,k}(X_{1}^{n}))-\frac{1}{k}\right)_{+}-\xi_{k}\|\theta_{*}\|^{2}}{\xi_{k}\|\theta_{*}\|}\right|\\
 & \leq\frac{\left|\lambda_{\max}(\hat{\Sigma}_{n,k}(X_{1}^{n}))-\frac{1}{k}-\xi_{k}\|\theta_{*}\|^{2}\right|}{\xi_{k}\|\theta_{*}\|}\\
 & \leq\frac{2\left|\lambda_{\max}(\hat{\Sigma}_{n,k}(X_{1}^{n}))-\frac{1}{k}-\xi_{k}\|\theta_{*}\|^{2}\right|}{\|\theta_{*}\|},\label{eq: norm difference empirical bound}
\end{align}
where the last inequality follows since $\xi_{k}\geq1-4k\delta$ (from
Lemma \ref{lem:analysis of the random gain}), and $k\leq\frac{1}{8\delta}$.
Taking expectation of both sides, and utilizing Lemma \ref{lem: concentration of eigenvalue and vector}
results 
\[
\E\left[\left|\|\hat{\theta}\|-\|\theta_{*}\|\right|\right]\leq\frac{2\cdot\psi(n,d,\delta,k)}{\|\theta_{*}\|}.
\]
Lemma \ref{lem: concentration of eigenvalue and vector} further implies
that the second term in (\ref{eq: loss as norm loss and eigen loss})
is bounded as 
\[
\E\left[\|\theta_{*}\|\cdot\loss\left(v_{\text{max}}\left(\hat{\Sigma}_{n,k}(X_{1}^{n})\right),\frac{\theta_{*}}{\|\theta_{*}\|}\right)\right]\leq\frac{8\cdot\psi(n,d,\delta,k)}{\|\theta_{*}\|}.
\]
Hence, (\ref{eq: loss as norm loss and eigen loss}) 
\begin{align}
\loss(\hat{\theta},\theta_{*}) & \leq\frac{10\cdot\psi(n,d,\delta,k)}{\|\theta_{*}\|}\\
 & \leq 20\sqrt{\frac{\delta k^{2}}{n}}\cdot\|\theta_{*}\|+20\sqrt{\frac{d}{n}}+\frac{130}{\|\theta_{*}\|}\sqrt{\frac{d}{nk}}+100\frac{1}{\|\theta_{*}\|}\frac{d}{n}\\
 & \trre[=,a]20\sqrt{\frac{1}{64\delta n}}\cdot\|\theta_{*}\|+20\sqrt{\frac{d}{n}}+\frac{130}{\|\theta_{*}\|}\sqrt{\frac{8\delta d}{n}}+\frac{100}{\|\theta_{*}\|}\frac{d}{n}\\
 & \trre[\leq,b]20\sqrt{\frac{d}{n}}+\frac{130}{\|\theta_{*}\|}\sqrt{\frac{8\delta d}{n}}+\frac{110}{\|\theta_{*}\|}\frac{d}{n},\label{eq: upper bound on the loss proof}
\end{align}
where $(a)$ follows from setting $k=\frac{1}{8\delta}$, and $(b)$
follows since for $\|\theta_{*}\|\geq\beta(n,d,\delta)$,
\begin{align}
20\sqrt{\frac{1}{64\delta n}}\cdot\|\theta_{*}\| & =20\frac{\|\theta_{*}\|^{2}}{\|\theta_{*}\|}\sqrt{\frac{1}{64\delta n}}\\
 & \leq20\cdot\frac{\left(\sqrt{\frac{d}{n}}+\left(\frac{\delta d}{n}\right)^{1/4}\right)^{2}}{\|\theta_{*}\|}\cdot\sqrt{\frac{1}{64\delta n}}\\
 & \leq5\cdot\frac{\frac{d}{n}+\sqrt{\frac{\delta d}{n}}}{\|\theta_{*}\|}\cdot\sqrt{\frac{1}{\delta n}}\\
 & =5\cdot\frac{1}{\|\theta_{*}\|}\left(\frac{d}{n^{3/2}\sqrt{\delta}}+\frac{\sqrt{d}}{n}\right)\\
 & \leq\frac{5}{\|\theta_{*}\|}\frac{d}{n^{3/2}\sqrt{\delta}}+\frac{5}{\|\theta_{*}\|}\frac{d}{n}\\
 & \leq\frac{10}{\|\theta_{*}\|}\frac{d}{n},
\end{align}
where the last inequality follows since $\delta\geq\frac{1}{n}$. 
\end{proof}

\subsection{Proof of Theorem \ref{thm: impossibility lower bound for mean estimation}:
Impossibility lower bound}
\label{app:proof-converse-estimate-theta}

\paragraph*{The proof's main ideas}

The estimation error for the Markov model cannot be better than a
genie-aided model for which the sign is known during a block whose
size is larger than the mixing time $\Theta(\frac{1}{\delta})$ of
the original Markov chain. The estimator can then align the signs
of the samples in each block, and thus reduce the noise variance by
a factor of $\delta$. This effectively reduces the problem into a
Markov model whose flip probability is asymptotically close to $1/2$,
that is, the resulting model is close to a GMM. This aforementioned
closeness is quantified by a uniform bound on the ratio between the
probability distributions. Based on the closeness of the models, the
known lower bound for the GMM model \citep[Appendxi B]{wu2019EM}
implies a bound on the estimation error in the Markov model.

The proof follows from an application of Fano's method \citep[Section 15.3]{wainwright2019high} \citep{yang1999information},
where, as usual, the main technical challenge follows from the bounding
of the mutual information term. We first begin with a brief description
of the lower bound on the estimation error, and then explain how it
is solved in the Gaussian mixture case in \citep[Appendix B]{wu2019EM}.
We then describe our analysis method for the Markov case. 

\paragraph*{Construction of a packing set}

The version of Fano's method that we use is based on constructing
a packing set $\Theta_{M}\dfn\{\theta_{m}\}_{m\in[M]}\subset\mathbb{R}^d $ and an additional
center vector $\theta_{0}\in\mathbb{R}^{d}$. Here, the set $\Theta_{M}$
packs points in a spherical cap of a fixed angle with a center at
$\theta_{0}$. To construct this set, we let $\Phi_{M}\dfn\{\phi_{m}\}_{m\in[M]}\subset\mathbb{B}^{d-2}$
be a $\frac{1}{16}$-packing set of $\mathbb{B}^{d-2}$ of size larger
than $M\geq16^{d-2}$ in the Euclidean distance, whose existence is
assured by a standard argument (e.g., \eqref{eqn:packing} from \citep[Lemma 5.7 and Example 5.8]{wainwright2019high}).
We then append to each $\phi_{m}$ another coordinate to obtain $\overline{\phi}_{m}\dfn(\phi_{m},\sqrt{1-\|\phi_{m}\|})\in\mathbb{S}^{d-2}$.
Then, $\overline{\Phi}_{M}\dfn\{\overline{\phi}_{m}\}_{m\in[M]}\subset\mathbb{S}^{d-2}$
is a $\frac{1}{16}$-packing set of $\mathbb{S}^{d-2}$ of size $M$.
Now, the packing set $\overline{\Phi}_{M}$ packs points in the Euclidean
distance, however, the loss function $\loss(\cdot,\cdot)$ in (\ref{eq: loss function for means})
is sign-insensitive, and so the packing set requires further dilution.
Specifically, there must exist an orthant in $\mathbb{R}^{d-1}$ which
contains at least a $2^{-(d-1)}$ fraction of the points in $\overline{\Phi}_{M}$.
So, there must also exist a rotation matrix $O\in\mathbb{R}^{(d-1)\times(d-1)}$
of $\overline{\Phi}_{M}$ so that $\tilde{\Phi}_{M}\dfn O\overline{\Phi}_{M}\cap\mathbb{R}_{+}^{d-1}=\{O\overline{\phi}_{m}\}_{m\in[M]}\cap\mathbb{R}_{+}^{d-1}$
has at least $\frac{|\overline{\Phi}_{M}|}{2^{d-1}}\geq\frac{1}{16}\cdot8^{d-1}$ points
(with $\mathbb{R}_{+}^{d-1}$ being the positive orthant). Furthermore,
for any distinct $\tilde{\phi}_{m},\tilde{\phi}_{m'}\in\tilde{\Phi}_{M}$
it holds that $\loss(\tilde{\phi}_{m},\tilde{\phi}_{m'})=\|\tilde{\phi}_{m}-\tilde{\phi}_{m'}\|$.
We now choose $\epsilon\in(0,1)$, set $\tilde{\theta}_{0}=[1,0,\ldots,0]\in\mathbb{R}^d $
to be the center vector (this choice is arbitrary and made for convenience),
and $\tilde{\theta}_{m}=[\sqrt{1-\epsilon^{2}},\epsilon\tilde{\phi}_{m}]\in\mathbb{R}^d $,
for $m\in[M]$. Evidently, the angle between any $\tilde{\theta}_{m}$
and $\tilde{\theta}_{0}$ is fixed for all $m\in[M]$. Finally, given
the prescribed norm $t$ in the statement of the theorem, we set the
packing set that will be next used in Fano's-inequality based argument
as $\Theta_{M}\dfn\{t\cdot\tilde{\theta}_{m}\}_{m\in[M]}$ and set
$\theta_{0}=t\tilde{\theta}_{0}$. To summarize, $\Theta_{M}$ satisfies
the following properties: (i) $|\Theta_{M}|\geq\frac{1}{16}\cdot8^{d-1}$, (ii) for
any $\theta_{m}\in\Theta_{M}$ it holds that $\|\theta_{m}\|=t$,
(iii) for any distinct $\theta_{m},\theta_{m'}\in\Theta_{M}$ it holds
that $\loss(\theta_{m},\theta_{m'})=\|\theta_{m}-\theta_{m'}\|\geq\frac{1}{16}\epsilon t$,
(iv) for any $\theta_{m}\in\Theta_{m}$ it holds that 
\[
\loss(\theta_{m},\theta_{0})=\|\theta_{m}-\theta_{0}\|=t\cdot\sqrt{(\sqrt{1-\epsilon^{2}}-1)^{2}+\epsilon^{2}}\leq t\cdot\left[\sqrt{1-\epsilon^{2}}-1+\epsilon\right]\leq2t\epsilon,
\]
where the first inequality is from $\sqrt{a^{2}+b^{2}}\leq a+b$ for
$a,b\in\mathbb{R}_{+}$ and the second inequality from $\sqrt{1-\epsilon^{2}}\leq1+\epsilon$. 

\paragraph*{Fano's inequality based lower bound}

Recall that $P_{\theta}^{(n)}$ is the probability distribution of
$X_{1}^{n}$ under the Markov model $X_{i}=S_{i}\theta+Z_{i}$, $i\in[n]$
with $S_{0}^{n}$ is as in (\ref{eq: Markov sign model}). Let $J\sim\text{Unif}[M]$
and assume that given a prescribed norm $t>0$, it holds that $X_{1}^{n}\mid J=j\sim P_{\theta_{j}}^{(n)}$.
Then, based on the four properties of the packing set $\Theta_{M}$,
Fano's method states that \citep[Proposition 15.2]{wainwright2019high}
\begin{align}
\M(n,d,\delta,t) & \geq\frac{\epsilon t}{32}\cdot\left[1-\frac{I(J;X_{1}^{n})+\log2}{\log M}\right] \label{eqn:apply-fano-1} \\
 & \geq\frac{\epsilon t}{32}\cdot\left[1-\frac{I(J;X_{1}^{n})+\log2}{(d-1)\log8}\right]\\
 & \trre[\geq,a]\frac{\epsilon t}{32}\cdot\left[\frac{1}{2}-4\cdot\frac{I(J;X_{1}^{n})}{d}\right]\\
 & \trre[\geq,b]\frac{\epsilon t}{32}\cdot\left[\frac{1}{2}-4\cdot\frac{\max_{m\in[M]}\Dkl(P_{\theta_{m}}^{(n)}\mid\mid P_{\theta_{0}}^{(n)})}{d}\right]\label{eq: Fano based lower bound for Markov case with KL}
\end{align}
where $I(J;X_{1}^{n})$ is the mutual information between $J$ and
$X_{1}^{n}$, and in $(a)$ we have used the assumption $d\geq3$ which implies $ (d-1)\log8 - \log16\ge \frac{d}{4} $,
and in $(b)$ we have used the standard ``information-radius'' bound
on the mutual information (e.g., (15.52) in \citep[Proof of Lemma 15.21 and Exercise 15.11]{wainwright2019high}).
The crux of the proof is to establish a bound of the form $\Dkl(P_{\theta_{m}}^{(n)}\mid\mid P_{\theta_{0}}^{(n)})\leq\frac{d}{16}$
for a given $\epsilon_{*}>0$ to obtain a lower bound of $\Omega(\epsilon_{*}t)$.
Note that due to symmetry of the packing set, the last KL divergence
is the same for all $m\in[M]$. 

Before continuing the proof for the Markov case with $\delta<\frac{1}{2}$,
we next describe the analysis of the Gaussian mixture model in \citep{wu2019EM}.
For the Gaussian mixture model (that is, a degenerate Markov model
with $\delta=\frac{1}{2}$), $P_{\theta}^{(n)}$ is an i.i.d. distribution,
and so the tensorization property of the KL divergence immediately
implies that $\Dkl(P_{\theta_{m}}^{(n)}\mid\mid P_{\theta_{0}}^{(n)})=n\cdot\Dkl(P_{\theta_{m}}\mid\mid P_{\theta_{0}})$
(where $P_{\theta}\equiv P_{\theta}^{(1)}$). Then, it was established
in \citep[Lemma 27]{wu2019EM} that $\Dkl(P_{\theta_{m}}\mid\mid P_{\theta_{0}})\lesssim t^{2}\cdot\|\theta_{0}-\theta_{m}\|^{2}$
as follows. First, it was noted that under $\theta_{0}=[t,0,\ldots,0]\in\mathbb{R}^d $,
$P_{\theta}=P_{t}\otimes N(0,I_{d-1})$, that is a product distribution,
with the first coordinate being one-dimensional Gaussian mixture with
means $\pm t$, and all the other $d-1$ coordinates being standard
Gaussian. Based on this and the chain rule of the KL divergence, the
KL divergence $\Dkl(P_{\theta_{m}}\mid\mid P_{\theta_{0}})$ was evaluated
separately by the KL divergence between the distribution of the
first coordinate, and the KL divergence between the distributions
of the other $d-1$ coordinates, conditioned on the first coordinates.
Each of these two KL divergences was bounded by the corresponding
chi-square divergences, and was further shown to be $O(t^{2}\cdot\|\theta_{0}-\theta_{m}\|^{2})$.
Finally, the minimax lower bound stated in (\ref{eq: minimax rates Gaussian mixture low dimension})
was obtained by setting $\epsilon=c\cdot\min\{1,\frac{1}{t^{2}}\sqrt{\frac{d}{n}}\}$
for some small enough $c>0$ in the construction of the packing set.

For the Markov model (with $\delta<\frac{1}{2}$), it seems rather
cumbersome to bound $\Dkl(P_{\theta_{m}}^{(n)}\mid\mid P_{\theta_{0}}^{(n)})$
since the model $P_{\theta}^{(n)}$ has memory. Thus we next propose
an indirect approach, which requires three preliminary steps. At the
first step, we reduce, via a genie-based argument, the original Gaussian-Markov
model with $n$ samples to a Gaussian-Markov model with $\ell$ samples,
where each of these $\ell$ sample is a coherent average of a block
of $k$ consecutive samples, $\ell=\frac{n}{k}$ (in a similar, yet
not identical, form to the estimator from Section \ref{sec:Mean-estimation-for}).
The sequences of signs underlying each of these $\ell$ samples also
forms a Markov model, where the block length $k$ is judiciously chosen
(roughly) as $\Theta(\frac{1}{\delta})$ so that the dependency between
the signs is much weaker, in the sense that the flip probability $ \overline{\delta} $ of
its signs \emph{asymptotically} tends to $\frac{1}{2}$ as $ \delta\to0 $. Since for
Gaussian mixture models the flip probability is \emph{exactly }$\frac{1}{2}$,
this Markov model is \emph{approximately} a Gaussian mixture model. 

At the second step, we develop a change-of-measure argument. As said,
the reduced model is only approximately Gaussian mixture ($\overline{\delta}\approx\frac{1}{2}$);
Had it was exactly a Gaussian mixture model $(\overline{\delta}=\frac{1}{2}$)
then the bound on the KL divergence from \citep[Lemma 27]{wu2019EM}
could have been directly used. Nonetheless, since $\overline{\delta}\to\frac{1}{2}$
with our choice of $k$, the probability distribution of the samples
tends to that of the memoryless Gaussian mixture. Since divergences
are essentially a continuous functions of the measured distributions,
we expect that the KL divergence for the Markov model with $\overline{\delta}$
is close to the KL divergence for the Gaussian mixture model. Lemma
\ref{lem: ratio between Markov with flip close to half and GM} forms
the basis of this change-of-measure argument in terms of the probability
distribution of the signs, and Lemma \ref{lem: change of measure bound for KL }
provides the corresponding change-of-measure bound for the KL divergence. 

At the third step, we bound the chi-square divergence for a $d$ dimensional
Gaussian mixture model. As described above, in \citep[Lemma 27]{wu2019EM}
the KL divergence was upper bounded for the Gaussian mixture model
by first separating to the KL divergence in the first coordinate and
the KL divergence in all other $d-1$ coordinates, exploiting the chain rule of the
KL divergence, and then bounding each of the two KL divergences by
a chi-square divergence. Here, we provide a refined proof which directly
bounds the chi-square divergence, without the need to separate the
first coordinate from the others. This is the result of Lemma \ref{lem: chi-square for d dimensional Gaussian mixture}.

The proof of Theorem \ref{thm: impossibility lower bound for mean estimation}
is then completed using the results of the three
preliminary steps. We next turn to the detailed proof. 

\textbf{\uline{First step (genie-aided reduction):}} In this step
we reduce the original model $P_{\theta}^{(n)}$ to a model $\tilde{P}_{\theta}^{(\ell)}$
which is aided by knowledge from a genie, and so estimation errors
in the new model are only lower. Inspired by the operation of our
proposed estimator in Section \ref{sec:Mean-estimation-for}, we consider
$\ell=\frac{n}{k}$ blocks of size $k$ each, where, for simplicity
of exposition, we assume that both $\ell,k$ are integers. A genie
informs the estimator with the sign changes $S_{j}S_{j+1}$ at the
time indices $j\in[n-1]\backslash\{ik\}_{i=1}^{\ell-1}$, that is,
at all times \emph{except for $j=k,2k,3k,\cdots,n$.} Hence, the estimator
can align the signs within each block ${\cal I}_{i}\dfn\{(i-1)k+1,(i-1)k+2,\cdots,ik\}$
of size $k$, but not between blocks. Consequently, a statistically
\emph{equivalent} model to the original model with genie information
is the model
\[
Y_{i}=R_{i}\theta_{*}+Z_{i} 
\]
where $ i\in[n] $, $Z_{i}\sim N(0,I_{d})$ are i.i.d. exactly as in the original
Markov model, and the signs $R_{i}\in\{-1,1\}$ are such that 
\[
R_{i+1}=R_{i}
\]
with probability $1$ if $i\in[n-1]\backslash\{jk\}_{j=1}^{\ell-1}$
and
\[
R_{jk+1}=\begin{cases}
R_{(j-1)k+1}, & \text{w.p. }\frac{1+\rho^{k}}{2}\\
-R_{(j-1)k+1}, & \text{w.p. }\frac{1-\rho^{k}}{2}
\end{cases}
\]
if $i\in\{jk\}_{j=1}^{\ell-1}$. Now, since the signs are fixed during
the blocks $\{{\cal I}_{i}\}_{i=1}^{\ell}$ of length $k$, the
mean of the samples in each block is a \emph{sufficient statistic}
for estimation of $\theta_{*}$. Thus, in the same spirit of the estimator
in Section \ref{sec:Mean-estimation-for}, the genie-aided model
is equivalent to 
\begin{equation}
\overline{Y}_{i}\dfn\frac{1}{k}\sum_{j\in{\cal I}_{i}}Y_{i}=\overline{R}_{i}\theta_{*}+\overline{Z}_{i}\label{eq: Gaussian Markov model genie}
\end{equation}
where $ i\in[\ell] $, $\P[\overline{R}_{0}=1]=\frac{1}{2}$, and
\begin{equation}
\overline{R}_{i+1}=\begin{cases}
\overline{R}_{i} & \text{w.p. }\frac{1+\rho^{k}}{2}\\
-\overline{R}_{i} & \text{w.p. }\frac{1-\rho^{k}}{2}
\end{cases}\label{eq: Markov model genie}
\end{equation}
is the gain (average of the signs) of the $i$th block, and 
\[
\overline{Z}_{i}\dfn\frac{1}{k}\sum_{j\in{\cal I}_{i}}Z_{j}\sim N\left(0,\frac{1}{k}\cdot I_{d}\right)
\]
is a averaged Gaussian noise. Thus, there are $\ell$ samples in the
equivalent model, and the flip probability is $\overline{\delta}\dfn\frac{1-\rho^{k}}{2}$,
which is closer to $\frac{1}{2}$ compared to the flip probability
in the original model, $\delta=\frac{1-\rho}{2}$. We note that there
are two differences compared to the model used by the estimator in
Section \ref{sec:Mean-estimation-for}. First, due to the information
supplied from the genie, here the average gain is $\overline{R}_{i}\in\{-1,1\}$
with probability $1$, to wit $|\overline{R}_{i}|$ never drops below
$1$; Second, there is no randomization between the blocks. In the
definition of the estimator in Section \ref{sec:Mean-estimation-for}
we indeed had the freedom to randomize the first sign in each block
in order to make the blocks statistically independent, whereas here,
in an impossibility bound, it is possible that statistical dependence
between the blocks may result improved estimation rates (though as
we shall see, this is essentially not the case). In what follows,
we denote by $\overline{P}_{\theta}^{(\ell)}$ the probability
distribution of $\overline{Y}_{1}^{\ell}$ under the model (\ref{eq: Gaussian Markov model genie})
with flip probability $\overline{\delta}$ (i.e., as with signs as
in (\ref{eq: Markov model genie})). At this point, the trade-off
involved in an optimal choice of $k$ is already apparent -- as the
block length $k$ increases, the dependence between the blocks, as
reflected by the flip probability $\overline{\delta}=\frac{1-\rho^{k}}{2}$,
decreases. On the other hand, the largest lower bound is attained
when the genie reveals minimal information, that is, when $k$ is
minimal. Our choice of $k$ is thus essentially the minimal value
required so that $\frac{1-\rho^{k}}{2}\to\frac{1}{2}$, and specifically
chosen as $k=\frac{\log(n)}{\delta}$.

\textbf{\uline{Second step (change of measure):}} We denote by
$p_{\delta}(s_{1}^{\ell})=\P[S_{1}^{\ell}=s_{1}^{\ell}]$ the probability
distribution of $\ell$ sign samples drawn according to the law of
a homogeneous binary symmetric Markov chain with $\P[S_{0}=1]=1/2$
and flip probability $\delta$. Note that $p_{1/2}(s_{1}^{\ell})$
is then an i.i.d. model, with uniform random signs. The next lemma
uniformly bounds the ratio between $p_{\delta}(s_{1}^{\ell})$ and
$p_{1/2}(s_{1}^{\ell})$ when $\delta$ is close to $1/2$, as was
obtained in the first step. 
\begin{lem}
\label{lem: ratio between Markov with flip close to half and GM}Let
$S_{0}^{\ell}\in\{-1,1\}^{\ell+1}$ be a homogeneous binary symmetric
Markov chain with $\P[S_{0}=1]=1/2$ and flip probability $\delta\in[0,\frac{1}{2}]$,
and let $p_{\delta}(s_{1}^{\ell})=\P[S_{1}^{\ell}=s_{1}^{\ell}]$.
Furthermore, for $\overline{\delta}=\frac{1-\rho^{k}}{2}$ where $\rho=1-2\delta$
and $k=\frac{\log(n)}{\delta}$ it holds that 
\[
1-\frac{1}{n}\leq\frac{p_{\overline{\delta}}(s_{1}^{\ell})}{p_{1/2}(s_{1}^{\ell})}\leq1+\frac{2}{n}.
\]
\end{lem}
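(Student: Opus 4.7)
}
The plan is to write both sides of the ratio in closed form and then reduce the inequality to an elementary estimate on $\rho^k$. Since $S_0$ is uniform on $\{-1,1\}$ and the chain is stationary, $S_1$ is also uniform, and so for any sign chain $p_{\delta'}(s_1^{\ell}) = \tfrac{1}{2}\prod_{i=2}^{\ell}\mathbb{P}[S_i=s_i\mid S_{i-1}=s_{i-1}]$ under flip probability $\delta'$. Let $f(s_1^{\ell}) \dfn |\{i\in\{2,\ldots,\ell\}\colon s_i\neq s_{i-1}\}|$ denote the number of flips. Then $p_{\delta'}(s_1^{\ell}) = \tfrac{1}{2}(\delta')^{f}(1-\delta')^{\ell-1-f}$, so in particular $p_{1/2}(s_1^{\ell}) = 2^{-\ell}$, and
\[
\frac{p_{\overline{\delta}}(s_1^{\ell})}{p_{1/2}(s_1^{\ell})} = (2\overline{\delta})^{f}(2(1-\overline{\delta}))^{\ell-1-f} = (1-\rho^{k})^{f}(1+\rho^{k})^{\ell-1-f},
\]
since $2\overline{\delta} = 1-\rho^{k}$ and $2(1-\overline{\delta}) = 1+\rho^{k}$.

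The next step is to bound $\rho^{k}$. Using $\rho = 1-2\delta \in [0,1]$ together with the elementary inequality $1-2\delta \le e^{-2\delta}$ and the choice $k=\frac{\log(n)}{\delta}$, one obtains $0 \le \rho^{k} \le e^{-2\delta k} = n^{-2}$.

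Once $\rho^{k}\le n^{-2}$ is in hand, the two-sided bound follows by standard inequalities applied uniformly over the admissible values of $f\in\{0,1,\ldots,\ell-1\}$. For the upper bound, discarding the factor $(1-\rho^{k})^{f}\le 1$ and using $1+x\le e^{x}$ gives
\[
\frac{p_{\overline{\delta}}(s_1^{\ell})}{p_{1/2}(s_1^{\ell})} \le (1+\rho^{k})^{\ell-1} \le e^{(\ell-1)\rho^{k}} \le e^{\ell/n^{2}} \le e^{1/n} \le 1+\tfrac{2}{n},
\]
where the penultimate step uses $\ell = n/k \le n$ and the last uses $e^{x}\le 1+2x$ for $x\le 1$. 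For the lower bound, discarding the factor $(1+\rho^{k})^{\ell-1-f}\ge 1$ and applying Bernoulli's inequality $(1-x)^{m}\ge 1-mx$ for $x\in[0,1]$, $m\ge 0$, yields
\[
\frac{p_{\overline{\delta}}(s_1^{\ell})}{p_{1/2}(s_1^{\ell})} \ge (1-\rho^{k})^{\ell-1} \ge 1-(\ell-1)\rho^{k} \ge 1-\tfrac{1}{n}.
\]

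The calculation is essentially routine once the structure above is set up; the only real choice point is to notice that the product form collapses to a function of just the number of flips $f$, which is what makes the bound uniform over $s_1^{\ell}$. The sole quantitative ingredient is the estimate $\rho^{k}\le n^{-2}$, which is precisely why the block length $k=\log(n)/\delta$ was chosen: any shorter block would leave a residual $\rho^{k}$ too large to absorb into a $1\pm O(1/n)$ factor, while longer blocks would weaken the genie-aided reduction upstream. No significant obstacle is anticipated.
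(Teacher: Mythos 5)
Your proof is correct and follows essentially the same route as the paper: both reduce the ratio to powers of $(1\pm\rho^{k})$ with total exponent at most $n$, use $\rho^{k}\le e^{-2\delta k}=n^{-2}$ from the choice $k=\log(n)/\delta$, and finish with the same elementary inequalities ($1+x\le e^{x}$, $e^{x}\le1+2x$, Bernoulli). The only difference is cosmetic: you compute the likelihood ratio exactly via the flip count $f$, whereas the paper just bounds each transition probability crudely by $\overline{\delta}\le p\le 1-\overline{\delta}$, which yields the same $(1-\rho^{k})^{O(\ell)}\le\text{ratio}\le(1+\rho^{k})^{O(\ell)}$ estimate.
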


\begin{proof}
Note that $p_{1/2}(s_{1}^{\ell})=\frac{1}{2^{\ell}}$ for all $s_{1}^{\ell}\in\{-1,1\}^{\ell}$.
Then, since $\delta\in[0,\frac{1}{2}]$, letting $\rho=1-2\delta$, 
\[
p_{\delta}(s_{1}^{\ell})\leq(1-\delta)^{\ell}=\left(\frac{1+\rho}{2}\right)^{\ell}=p_{1/2}(s_{1}^{\ell})\left(1+\rho\right)^{\ell}=p_{1/2}(s_{1}^{\ell})\left(2-2\delta\right)^{\ell}
\]
and 
\[
p_{\delta}(s_{1}^{\ell})\geq\delta^{\ell}=\left(\frac{1-\rho}{2}\right)^{\ell}=p_{1/2}(s_{1}^{\ell})\left(1-\rho\right)^{\ell}=p_{1/2}(s_{1}^{\ell})\left(2\delta\right)^{\ell} . 
\]
It this holds that 
\[
\left(2\delta\right)^{\ell}\leq\frac{p_{\delta}(s_{1}^{\ell})}{p_{1/2}(s_{1}^{\ell})}\leq\left(2-2\delta\right)^{\ell}.
\]
Substituting $\delta=\overline{\delta}=\frac{1-\rho^{k}}{2}$, the upper
bound on $\frac{p_{\overline{\delta}}(s_{1}^{\ell})}{p_{1/2}(s_{1}^{\ell})}$
is further upper bounded as 
\begin{align}
\left(1+\rho^{k}\right)^{\ell} & =\left(1+(1-2\delta)^{\frac{\log(n)}{\delta}}\right)^{\ell}\\
 & \trre[\leq,a]\left(1+\frac{1}{n^{2}}\right)^{\ell}\\
 & \leq\left(1+\frac{1}{n^{2}}\right)^{n}\\
 & \trre[\leq,a]e^{1/n}\\
 & \trre[\leq,b]1+\frac{2}{n},
\end{align}
where both transitions denoted $(a)$ follow from $1+x\leq e^{x}$
for $x\in\mathbb{R}$, and $(b)$ follows from $e^{x}\leq1+2x$ for
$x\in[0,1]$. Similarly, the lower bound on $\frac{p_{\overline{\delta}}(s_{1}^{\ell})}{p_{1/2}(s_{1}^{\ell})}$
is further lower bounded as 
\begin{align}
\left(1-\rho^{k}\right)^{\ell} & =\left(1-(1-2\delta)^{\frac{\log(n)}{\delta}}\right)^{\ell}\\
 & \trre[\geq,a]\left(1-\frac{1}{n^{2}}\right)^{\ell}\\
 & \geq\left(1-\frac{1}{n^{2}}\right)^{n}\\
 & \trre[\geq,b]1-\frac{1}{n},
\end{align}
where $(a)$ follows again from $1+x\leq e^{x}$ for $x\in\mathbb{R}$,
and $(b)$ follows from Bernoulli's inequality $(1-x)^{r}\geq1-rx$
for $x\in[0,1]$ and $r\geq1$.
\end{proof}
Let $\tilde{P}_{\theta}^{(\ell)}$ denote the Gaussian mixture model (i.e.,
with flip probability $1/2$) corresponding to $\ell$ samples with
means at $\pm\theta\in\mathbb{R}^{d}$, and note that it has the same
$\ell$ marginal distributions as the genie-aided reduced model $\overline{P}_{\theta}^{(\ell)}$(with
flip probability $\overline{\delta}$). As discussed, the probability
distributions $\tilde{P}_{\theta}^{(\ell)}$ and $\overline{P}_{\theta}^{(\ell)}$
are close. The following lemma provides a change-of-measure bound
on the KL divergence for this case.
\begin{lem}
\label{lem: change of measure bound for KL }Let $U_{1}^{\ell}$ (resp.
$V_{1}^{\ell}$) be distributed according to a probability distribution
$P\equiv P_{U_{1}\cdots U_{\ell}}$ (resp. $Q\equiv Q_{U_{1}\cdots U_{\ell}}$
) on $(\mathbb{R}^{d})^{\ell}$, and let $\tilde{P}\equiv\tilde{P}_{U_{1}\cdots U_{\ell}}\dfn\prod_{i=1}^{n}P_{U_{i}}$
and $\tilde{Q}\equiv\tilde{Q}_{V_{1}\cdots V_{\ell}}\dfn\prod_{i=1}^{n}Q_{V_{i}}$
be the product distributions of their marginal distributions. Let 
\[
\beta_{P}\dfn\sup_{u_{1}^{\ell}\in(\mathbb{R}^{d})^{\ell}}\left(\frac{P_{U_{1}\cdots U_{\ell}}(u_{1}^{\ell})}{\tilde{P}_{U_{1}\cdots U_{\ell}}(u_{1}^{\ell})}\right)
\]
and 
\[
\beta_{Q}\dfn\sup_{u_{1}^{\ell}\in(\mathbb{R}^{d})^{\ell}}\left(\frac{\tilde{Q}_{V_{1}\cdots V_{\ell}}(u_{1}^{\ell})}{Q_{V_{1}\cdots V_{\ell}}(u_{1}^{\ell})}\right).
\]
Then, 
\[
\Dkl\left(P\mid\mid Q\right)\leq\Dkl\left(\tilde{P}\mid\mid\tilde{Q}\right)+\log(\beta_{P}\cdot\beta_{Q}).
\]
\end{lem}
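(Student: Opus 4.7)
The plan is to write $\log(P/Q)$ as a telescoping sum by inserting the product measures $\tilde{P}$ and $\tilde{Q}$, and then bound each of the three resulting pieces. Specifically, I would start from
\[
\Dkl(P \| Q) = \int P \log\frac{P}{Q}\,\d\nu = \int P \log\frac{P}{\tilde{P}}\,\d\nu + \int P \log\frac{\tilde{P}}{\tilde{Q}}\,\d\nu + \int P \log\frac{\tilde{Q}}{Q}\,\d\nu,
\]
treating each term separately.

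The two outer terms are immediate from the uniform ratio hypotheses: by definition of $\beta_P$ the integrand in the first term is pointwise at most $\log\beta_P$, so the first term is at most $\log\beta_P$; analogously, by definition of $\beta_Q$ the third term is at most $\log\beta_Q$. The main (and really the only substantive) step is to show that the middle term equals $\Dkl(\tilde{P}\|\tilde{Q})$ even though the outer expectation is taken under $P$ rather than $\tilde{P}$. The key observation is that $\log(\tilde{P}/\tilde{Q})(u_1^\ell) = \sum_{i=1}^\ell \log\bigl(P_{U_i}(u_i)/Q_{V_i}(u_i)\bigr)$ is a sum of functions of the individual coordinates, and the marginal of $P$ on coordinate $i$ coincides (by construction of $\tilde{P}$) with $P_{U_i}$. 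Hence
\[
\int P \log\frac{\tilde{P}}{\tilde{Q}}\,\d\nu = \sum_{i=1}^\ell \int P_{U_i}(u_i)\log\frac{P_{U_i}(u_i)}{Q_{V_i}(u_i)}\,\d u_i = \sum_{i=1}^\ell \Dkl(P_{U_i}\|Q_{V_i}) = \Dkl(\tilde{P}\|\tilde{Q}),
\]
the last equality being tensorization of KL for product measures.

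Combining the three bounds yields $\Dkl(P\|Q) \le \Dkl(\tilde{P}\|\tilde{Q}) + \log\beta_P + \log\beta_Q$, which is the claim. I do not anticipate a real obstacle; the only subtlety to be careful about is the asymmetric direction of the two hypotheses ($\beta_P$ bounds $P/\tilde{P}$ while $\beta_Q$ bounds $\tilde{Q}/Q$), which is exactly what is needed so that the two ``change of measure'' terms have the correct sign after taking $\log$. Technically one should also note that the hypotheses force absolute continuity in the relevant directions, so all densities and logarithms are well-defined (wherever $P>0$ we have $\tilde{P}>0$ by $\beta_P<\infty$, and wherever $Q=0$ we must have $\tilde{Q}=0$ hence $\tilde{P}=0$ and thus $P=0$, making $\Dkl(P\|Q)$ finite precisely when $\Dkl(\tilde{P}\|\tilde{Q})$ is, as expected).
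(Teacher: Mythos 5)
Your proposal is correct and follows essentially the same route as the paper: the paper's two auxiliary identities (for $\int P\log(1/\tilde{Q})$ and $\int \tilde{P}\log\tilde{P}$) combine to exactly your key observation that $\int P\log(\tilde{P}/\tilde{Q}) = \Dkl(\tilde{P}\mid\mid\tilde{Q})$ because $\log(\tilde{P}/\tilde{Q})$ is a sum of coordinate-wise functions and $P$ shares its marginals with $\tilde{P}$, after which the two remaining terms are bounded pointwise by $\log\beta_P$ and $\log\beta_Q$.
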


\begin{proof}
Since $\tilde{Q}$ is a product distribution, and since $P$ and $\tilde{P}$
have the same marginal distributions on $\mathbb{R}^{d}$ (at each
time point) then 
\begin{align}
 & \int P_{U_{1}\cdots U_{\ell}}(u_{1}^{\ell})\log\left(\frac{1}{\tilde{Q}_{V_{1}\cdots V_{\ell}}(u_{1}^{\ell})}\right)\d u_{1}^{\ell}\nonumber \\
 & =\sum_{i=1}^{\ell}\int P_{U_{1}\cdots U_{\ell}}(u_{1}^{\ell})\log\left(\frac{1}{\tilde{Q}_{V_{i}}(u_{i})}\right)\d u_{1}^{\ell}\\
 & =\sum_{i=1}^{\ell}\int P_{U_{i}}(u_{i})\log\left(\frac{1}{\tilde{Q}_{V_{i}}(u_{i})}\right)\d u_{i}\\
 & =\sum_{i=1}^{\ell}\int\tilde{P}_{U_{i}}(u_{i})\log\left(\frac{1}{\tilde{Q}_{V_{i}}(u_{i})}\right)\d u_{i}\\
 & =\sum_{i=1}^{\ell}\int\tilde{P}_{U_{1}\cdots U_{\ell}}(u_{1}^{\ell})\log\left(\frac{1}{\tilde{Q}_{V_{i}}(u_{i})}\right)\d u_{1}^{\ell}\\
 & =\int\tilde{P}_{U_{1}\cdots U_{\ell}}(u_{1}^{\ell})\log\left(\frac{1}{\tilde{Q}_{V_{1}\cdots V_{\ell}}(u_{1}^{\ell})}\right)\d u_{1}^{\ell}.\label{eq: change of measure for KL - first bound}
\end{align}
Using similar reasoning
\begin{align}
 & \int\tilde{P}_{U_{1}\cdots U_{\ell}}(u_{1}^{\ell})\log\left(\tilde{P}_{U_{1}\cdots U_{\ell}}(u_{1}^{\ell})\right)\d u_{1}^{\ell}\nonumber \\
 & =\sum_{i=1}^{\ell}\int\tilde{P}_{U_{1}\cdots U_{\ell}}(u_{1}^{\ell})\log\left(\tilde{P}_{U_{i}}(u_{i})\right)\d u_{1}^{\ell}\\
 & =\sum_{i=1}^{\ell}\int\tilde{P}_{U_{i}}(u_{i})\log\left(\tilde{P}_{U_{i}}(u_{i})\right)\d u_{i}\\
 & =\sum_{i=1}^{\ell}\int P_{U_{i}}(u_{i})\log\left(\tilde{P}_{U_{i}}(u_{i})\right)\d u_{i}\\
 & =\sum_{i=1}^{\ell}\int P_{U_{1}\cdots U_{\ell}}(u_{1}^{\ell})\log\left(\tilde{P}_{U_{i}}(u_{i})\right)\d u_{1}^{\ell}\\
 & =\int P_{U_{1}\cdots U_{\ell}}(u_{1}^{\ell})\log\left(\tilde{P}_{U_{1}\cdots U_{\ell}}(u_{1}^{\ell})\right)\d u_{1}^{\ell}\\
 & =\int P_{U_{1}\cdots U_{\ell}}(u_{1}^{\ell})\log\left(P_{U_{1}\cdots U_{\ell}}(u_{1}^{\ell})\right)\d u_{1}^{\ell}+\int P_{U_{1}\cdots U_{\ell}}(u_{1}^{\ell})\log\left(\frac{\tilde{P}_{U_{1}\cdots U_{\ell}}(u_{1}^{\ell})}{P_{U_{1}\cdots U_{\ell}}(u_{1}^{\ell})}\right)\d u_{1}^{\ell}.\label{eq: change of measure for KL - second bound}
\end{align}
Combining (\ref{eq: change of measure for KL - first bound}) and
(\ref{eq: change of measure for KL - second bound}) results the following
bound
\begin{align}
 & \Dkl\left(P\mid\mid Q\right)\nonumber \\
 & =\int P_{U_{1}\cdots U_{\ell}}(u_{1}^{\ell})\log\left(\frac{P_{U_{1}\cdots U_{\ell}}(u_{1}^{\ell})}{Q_{V_{1}\cdots V_{\ell}}(u_{1}^{\ell})}\right)\d u_{1}^{\ell}\\
 & =\int P_{U_{1}\cdots U_{\ell}}(u_{1}^{\ell})\log\left(\frac{P_{U_{1}\cdots U_{\ell}}(u_{1}^{\ell})}{\tilde{Q}_{V_{1}\cdots V_{\ell}}(u_{1}^{\ell})}\right)\d u_{1}^{\ell}+\int P_{U_{1}\cdots U_{\ell}}(u_{1}^{\ell})\log\left(\frac{\tilde{Q}_{V_{1}\cdots V_{\ell}}(u_{1}^{\ell})}{Q_{V_{1}\cdots V_{\ell}}(u_{1}^{\ell})}\right)\d u_{1}^{\ell}\\
 &= \int\tilde{P}_{U_{1}\cdots U_{\ell}}(u_{1}^{\ell})\log\left(\tilde{P}_{U_{1}\cdots U_{\ell}}(u_{1}^{\ell})\right)\d u_{1}^{\ell} - \int P_{U_{1}\cdots U_{\ell}}(u_{1}^{\ell})\log\left(\frac{\tilde{P}_{U_{1}\cdots U_{\ell}}(u_{1}^{\ell})}{P_{U_{1}\cdots U_{\ell}}(u_{1}^{\ell})}\right)\d u_{1}^{\ell} \nonumber \\
 &\hphantom{=} +\int P_{U_{1}\cdots U_{\ell}}(u_{1}^{\ell})\log\left(\frac{1}{\tilde{Q}_{V_{1}\cdots V_{\ell}}(u_{1}^{\ell})}\right)\d u_{1}^{\ell} + \int P_{U_{1}\cdots U_{\ell}}(u_{1}^{\ell})\log\left(\frac{\tilde{Q}_{V_{1}\cdots V_{\ell}}(u_{1}^{\ell})}{Q_{V_{1}\cdots V_{\ell}}(u_{1}^{\ell})}\right)\d u_{1}^{\ell} \\
 & \leq\Dkl\left(\tilde{P}\mid\mid\tilde{Q}\right)+\sup_{u_{1}^{\ell}\in(\mathbb{R}^{d})^{\ell}}\log\left(\frac{P_{U_{1}\cdots U_{\ell}}(u_{1}^{\ell})}{\tilde{P}_{U_{1}\cdots U_{\ell}}(u_{1}^{\ell})}\right)+\sup_{u_{1}^{\ell}\in(\mathbb{R}^{d})^{\ell}}\log\left(\frac{\tilde{Q}_{V_{1}\cdots V_{\ell}}(u_{1}^{\ell})}{Q_{V_{1}\cdots V_{\ell}}(u_{1}^{\ell})}\right),
\end{align}
as claimed by the lemma. 
\end{proof}
\textbf{\uline{Third step (bound on the chi-square divergence):}}
Let $\varphi(y;\theta,\sigma^{2})$ be the Gaussian PDF with mean
$\theta\in\mathbb{R}^{d}$ and covariance matrix $\sigma^{2}\cdot I_{d}$.
The next lemma bounds the chi-square divergence between a pair of
such distributions with different means. Originally, a bound of this
order was established in \citep[proof of Lemma 27]{wu2019EM} on the
KL divergence, by splitting the first coordinate (which is assumed,
w.l.o.g., to contain the signal) and the other $d-1$ coordinates,
using the chain rule, and then bounding each of the two KL terms with
the corresponding chi-square divergence. Here we provide a direct
upper bound on the chi-square divergence, which does not rely on splitting
between the coordinates of the mean vector, and which might be of
independent use in future works.
\begin{lem}
\label{lem: chi-square for d dimensional Gaussian mixture}Let $\tilde{P}_{\theta}=\frac{1}{2}N(\theta,\sigma^{2}\cdot I_d)+\frac{1}{2}N(-\theta,\sigma^{2}\cdot I_d)$
be a balanced Gaussian mixture with means at $\pm\theta\in\mathbb{R}^{d}$.
Then, if $\|\theta_{0}\|=\|\theta_{1}\|=t\leq\sigma$, then 
\begin{equation}
\Dchis(\tilde{P}_{\theta_{1}}\mid\mid\tilde{P}_{\theta_{0}})\leq\frac{8t^{2}}{\sigma^{4}}\cdot\|\theta_{0}-\theta_{1}\|^{2}.\label{eq: a bound on chi square for GM in the Fano argument lemma}
\end{equation}
\end{lem}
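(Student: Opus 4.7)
The plan is to reduce the $\chi^2$ divergence to an explicit Gaussian expectation, evaluate it exactly via MGF computations, and then use an elementary inequality. Throughout, let me write $\alpha := t^2/\sigma^2 \in [0,1]$ and $\beta := \theta_0^\T\theta_1/\sigma^2$, noting the identities $\alpha - \beta = \|\theta_0-\theta_1\|^2/(2\sigma^2)$ and $|\beta|\leq \alpha$ by Cauchy--Schwarz, so that $\alpha+\beta \in [0,2\alpha]$ and $\alpha-\beta\in[0,2\alpha]$.

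The first step is to collapse the two Gaussian summands into a single tilt of an isotropic Gaussian: $\tilde P_\theta(y) = \varphi(y;0,\sigma^{2}I_d)\cdot e^{-\|\theta\|^2/(2\sigma^2)}\cosh(\theta^\T y/\sigma^2)$. Using the identity $\Dchis(\tilde P_{\theta_1}\|\tilde P_{\theta_0}) = \int (\tilde P_{\theta_1}-\tilde P_{\theta_0})^2/\tilde P_{\theta_0}\,dy$ and the pointwise bound $\cosh\geq 1$ to kill the denominator $\cosh(\theta_0^\T y/\sigma^2)$, I would obtain
\[
\Dchis(\tilde P_{\theta_1}\|\tilde P_{\theta_0}) \;\leq\; e^{-\alpha/2}\cdot\E_{Y\sim N(0,\sigma^{2}I_d)}\!\left[\bigl(\cosh(\theta_1^\T Y/\sigma^2) - \cosh(\theta_0^\T Y/\sigma^2)\bigr)^{2}\right].
\]

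The second step is to compute the inner expectation in closed form. Writing $U := \theta_0^\T Y/\sigma^2$ and $V := \theta_1^\T Y/\sigma^2$, which are jointly centered Gaussian with common variance $\alpha$ and covariance $\beta$, and expanding the square via $\cosh^2 a = \tfrac12(1+\cosh 2a)$ and $\cosh a\cosh b = \tfrac12(\cosh(a+b)+\cosh(a-b))$ together with the Gaussian MGF $\E[\cosh(\lambda W)] = e^{\lambda^2\V(W)/2}$, I would arrive at
\[
\E[(\cosh V - \cosh U)^2] \;=\; 1 + e^{2\alpha} - 2e^\alpha\cosh\beta \;=\; (e^{\alpha+\beta}-1)(e^{\alpha-\beta}-1),
\]
the factorization in the last equality being the crux: it explicitly isolates the factor $e^{\alpha-\beta}-1$, which is on the scale of $\|\theta_0-\theta_1\|^2/\sigma^2$ and vanishes when $\theta_0=\theta_1$, exactly as a chi-square divergence must.

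To conclude, I would apply $e^x-1\leq xe^x$ to both factors to get $(e^{\alpha+\beta}-1)(e^{\alpha-\beta}-1) \leq (\alpha+\beta)(\alpha-\beta)e^{2\alpha} \leq 2\alpha(\alpha-\beta)e^{2\alpha}$ (using $\alpha+\beta\leq 2\alpha$); combined with $e^{-\alpha/2}e^{2\alpha} = e^{3\alpha/2}\leq e^{3/2} \leq 8$ and the identity $2\alpha(\alpha-\beta) = t^2\|\theta_0-\theta_1\|^2/\sigma^4$, this is exactly the claimed bound \eqref{eq: a bound on chi square for GM in the Fano argument lemma}. The main potential pitfall is the very first relaxation $\cosh\geq1$: since $\Dchis$ must vanish as $\theta_1\to\theta_0$, one might worry that this trivial bound loses too much and destroys the quadratic dependence on $\|\theta_0-\theta_1\|$. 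The factorization into $(e^{\alpha+\beta}-1)(e^{\alpha-\beta}-1)$ is precisely what rescues us, so that checking this factorization survives the $\cosh\geq 1$ relaxation is the main thing to verify carefully.
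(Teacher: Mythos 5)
Your proof is correct, and up to its final step it is essentially the paper's argument: the collapse of the mixture to $\varphi(y;0,\sigma^{2})e^{-\|\theta\|^{2}/(2\sigma^{2})}\cosh(\theta^{\T}y/\sigma^{2})$, the relaxation $\cosh\geq1$ to remove the denominator, and the exact Gaussian evaluation of the $\cosh^{2}$ and $\cosh\cdot\cosh$ integrals all appear verbatim in the paper (which does the integrals by rotation invariance in one dimension rather than via the joint law of $(U,V)$, a purely cosmetic difference). Indeed, with $\alpha=t^{2}/\sigma^{2}$ and $\beta=\theta_{0}^{\T}\theta_{1}/\sigma^{2}$, your intermediate quantity $e^{-\alpha/2}\bigl(1+e^{2\alpha}-2e^{\alpha}\cosh\beta\bigr)=2e^{\alpha/2}\bigl(\cosh\alpha-\cosh\beta\bigr)$ is algebraically identical to the paper's bound $e^{\|\theta_{0}\|^{2}/(2\sigma^{2})}[\cosh(\|\theta_{1}\|^{2}/\sigma^{2})-2\cosh(\theta_{1}^{\T}\theta_{0}/\sigma^{2})+\cosh(\|\theta_{0}\|^{2}/\sigma^{2})]$ specialized to equal norms. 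Where you genuinely diverge is the finishing inequality: the paper uses $\cosh(y)-\cosh(x)\leq y^{2}-x^{2}$ on $[0,1]$ (hence needs $t\leq\sigma$ already at this stage) and then rewrites $\|\theta_{1}\|^{4}-2(\theta_{1}^{\T}\theta_{0})^{2}+\|\theta_{0}\|^{4}=\|\theta_{0}\theta_{0}^{\T}-\theta_{1}\theta_{1}^{\T}\|_{F}^{2}\leq2(\|\theta_{0}\|^{2}+\|\theta_{1}\|^{2})\|\theta_{0}-\theta_{1}\|^{2}$, whereas you factor $1+e^{2\alpha}-2e^{\alpha}\cosh\beta=(e^{\alpha+\beta}-1)(e^{\alpha-\beta}-1)$ and apply $e^{x}-1\leq xe^{x}$ together with $\alpha-\beta=\|\theta_{0}-\theta_{1}\|^{2}/(2\sigma^{2})$. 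Your route avoids the Frobenius-norm manipulation entirely, only invokes $t\leq\sigma$ at the very end (to bound $e^{3\alpha/2}\leq e^{3/2}$), and yields the marginally sharper constant $e^{3/2}$ in place of $8$; the paper's route, by keeping the two norms distinct until the end, is slightly more readily adapted to unequal $\|\theta_{0}\|,\|\theta_{1}\|$. All steps you flag as delicate (nonnegativity of $\alpha\pm\beta$ via Cauchy--Schwarz, and the survival of the quadratic dependence after $\cosh\geq1$) check out.
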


\begin{proof}
For any $y\in\mathbb{R}^{d}$, 
\begin{align}
\tilde{P}_{\theta}(y) & =\frac{1}{2}\frac{1}{(2\pi\sigma^{2})^{d/2}}e^{-\frac{\|y-\theta\|^{2}}{2\sigma^{2}}}+\frac{1}{2}\frac{1}{(2\pi\sigma^{2})^{d/2}}e^{-\frac{\|y+\theta\|^{2}}{2\sigma^{2}}}\\
 & =\varphi(y;0,\sigma^{2})\cdot e^{-\frac{\|\theta\|^{2}}{2\sigma^{2}}}\cdot\cosh\left(\frac{\theta^{\top}y}{\sigma^{2}}\right).
\end{align}
Then, 
\begin{align}
 & \Dchis\left(\tilde{P}_{\theta_{1}}\mid\mid\tilde{P}_{\theta_{0}}\right)\nonumber \\
 & =\int\frac{\left[\varphi(y;0,\sigma^{2})e^{-\frac{\|\theta_{1}\|^{2}}{2\sigma^{2}}}\cdot\cosh\left(\frac{\theta_{1}^{\top}y}{\sigma^{2}}\right)-\varphi(y;0,\sigma^{2})e^{-\frac{\|\theta_{0}\|^{2}}{2\sigma^{2}}}\cdot\cosh\left(\frac{\theta_{0}^{\top}y}{\sigma^{2}}\right)\right]^{2}}{\varphi(y;0,\sigma^{2})\cdot e^{-\frac{\|\theta_{0}\|^{2}}{2\sigma^{2}}}\cosh\left(\frac{\theta_{0}^{\top}y}{\sigma^{2}}\right)}\cdot\d y\\
 & =e^{\frac{\|\theta_{0}\|^{2}}{2\sigma^{2}}}\int\varphi(y;0,\sigma^{2})\frac{\left[e^{-\frac{\|\theta_{1}\|^{2}}{2\sigma^{2}}}\cdot\cosh\left(\frac{\theta_{1}^{\top}y}{\sigma^{2}}\right)-e^{-\frac{\|\theta_{0}\|^{2}}{2\sigma^{2}}}\cdot\cosh\left(\frac{\theta_{0}^{\top}y}{\sigma^{2}}\right)\right]^{2}}{\cosh\left(\frac{\theta_{0}^{\top}y}{\sigma^{2}}\right)}\cdot\d y\\
 & \trre[\leq,a]e^{\frac{\|\theta_{0}\|^{2}}{2\sigma^{2}}}\cdot\int\varphi(y;0,\sigma^{2})\left[e^{-\frac{\|\theta_{1}\|^{2}}{2\sigma^{2}}}\cdot\cosh\left(\frac{\theta_{1}^{\top}y}{\sigma^{2}}\right)-e^{-\frac{\|\theta_{0}\|^{2}}{2\sigma^{2}}}\cdot\cosh\left(\frac{\theta_{0}^{\top}y}{\sigma^{2}}\right)\right]^{2}\cdot\d y\\
 & =e^{\frac{\|\theta_{0}\|^{2}}{2\sigma^{2}}}\cdot\int\varphi(y;0,\sigma^{2})e^{-\frac{\|\theta_{1}\|^{2}}{\sigma^{2}}}\cdot\cosh^{2}\left(\frac{\theta_{1}^{\top}y}{\sigma^{2}}\right)\cdot\d y\nonumber \\
 & \hphantom{==}-2e^{\frac{\|\theta_{0}\|^{2}}{2\sigma^{2}}}\cdot\int\varphi(y;0,\sigma^{2})e^{-\frac{\|\theta_{1}\|^{2}+\|\theta_{0}\|^{2}}{2\sigma^{2}}}\cosh\left(\frac{\theta_{1}^{\top}y}{\sigma^{2}}\right)\cosh\left(\frac{\theta_{0}^{\top}y}{\sigma^{2}}\right)\cdot\d y\nonumber \\
 & \hphantom{==}+e^{\frac{\|\theta_{0}\|^{2}}{2\sigma^{2}}}\cdot\int\varphi(y;0,\sigma^{2})e^{-\frac{\|\theta_{0}\|^{2}}{\sigma^{2}}}\cdot\cosh^{2}\left(\frac{\theta_{0}^{\top}y}{\sigma^{2}}\right)\cdot\d y,\label{eq: first bound on chi-square divergence btween two GM}
\end{align}
where $(a)$ follows since $\cosh(x)\geq1$ for all $x\in\mathbb{R}$.
We next evaluate the integral for each of the terms. First, for any
$y\in\mathbb{R}^{d}$,
\begin{align}
 & \int\varphi(y;0,\sigma^{2})\cosh^{2}\left(\frac{\theta_{1}^{\top}y}{\sigma^{2}}\right)\d y\nonumber \\
 & \trre[=,a]\int\varphi(y;0,1)\cdot\cosh^{2}\left(\frac{\theta_{1}^{\top}y}{\sigma}\right)\d y\\
 & \trre[=,b]\int\varphi(t;0,1)\cdot\cosh^{2}\left(\frac{\|\theta_{1}\|}{\sigma}t\right)\d t\\
 & =\int\varphi(t;0,1)\left[\frac{\exp\left(\frac{\|\theta_{1}\|}{\sigma}t\right)+\exp\left(-\frac{\|\theta_{1}\|}{\sigma}t\right)}{2}\right]^{2}\d t\\
 & =\int\varphi(t;0,1)\frac{\exp\left(2\frac{\|\theta_{1}\|}{\sigma}t\right)+2+\exp\left(-2\frac{\|\theta_{1}\|}{\sigma}t\right)}{4}\d t\\
 & \trre[=,c]\frac{1}{2}+\frac{1}{2}\cdot e^{2\frac{\|\theta_{1}\|^{2}}{\sigma^{2}}}\\
 & =e^{\frac{\|\theta_{1}\|^{2}}{\sigma^{2}}}\cosh\left(\frac{\|\theta_{1}\|^{2}}{\sigma^{2}}\right),
\end{align}
where $(a)$ follows from the change of variables $y\to\frac{y}{\sigma}$,
$(b)$ follows from the rotational invariance of the Gaussian
PDF $\varphi(y;0,\sigma^{2})$, we may assume that $\theta_{1}=(\|\theta_{1}\|,0,0,\ldots,0)$,
setting $t\in\mathbb{R}$ to be the first coordinate of $y$, and
integrating over all other $d-1$ coordinates, and $(c)$ follows
from the Gaussian moment-generating function formula. The third term
in the integral is similarly evaluated. For the second term, 

\begin{align}
 & \int\varphi(y;0,\sigma^{2})\left[2\cosh\left(\frac{\theta_{1}^{\top}y}{\sigma^{2}}\right)\cosh\left(\frac{\theta_{0}^{\top}y}{\sigma^{2}}\right)\right]\cdot\d y\nonumber \\
 & \trre[=,a]\int\varphi(y;0,1)\left[2\cosh\left(\frac{\theta_{1}^{\top}y}{\sigma}\right)\cosh\left(\frac{\theta_{0}^{\top}y}{\sigma}\right)\right]\cdot\d y\\
 & \trre[=,b]\int\varphi(y;0,1)\cosh\left(\frac{(\theta_{1}+\theta_{0})^{\top}y}{\sigma}\right)\cdot\d y+\int\varphi(y;0,1)\cosh\left(\frac{(\theta_{1}-\theta_{0})^{\top}y}{\sigma}\right)\cdot\d y\\
 & \trre[=,c]\int\varphi(t;0,1)\cosh\left(\frac{\|\theta_{1}+\theta_{0}\|t}{\sigma}\right)\cdot\d t+\int\varphi(t;0,1)\cosh\left(\frac{\|\theta_{1}-\theta_{0}\|t}{\sigma}\right)\cdot\d t\\
 & \trre[=,d]e^{\frac{\|\theta_{1}+\theta_{0}\|^{2}}{2\sigma^{2}}}+e^{\frac{\|\theta_{1}-\theta_{0}\|^{2}}{2\sigma^{2}}}\\
 & =e^{\frac{\|\theta_{0}\|^{2}+\|\theta_{1}\|^{2}}{2\sigma^{2}}}\cdot\left[e^{\frac{\theta_{1}^{\top}\theta_{0}}{\sigma^{2}}}+e^{\frac{-\theta_{1}^{\top}\theta_{0}}{\sigma^{2}}}\right]\\
 & =e^{\frac{\|\theta_{0}\|^{2}+\|\theta_{1}\|^{2}}{2\sigma^{2}}}\cdot2\cosh\left(\frac{\theta_{1}^{\top}\theta_{0}}{\sigma^{2}}\right).
\end{align}
where $(a)$ follows from the change of variables $y\to\frac{y}{\sigma}$,
$(b)$ follows from the identity $2\cosh(x)\cosh(y)=\cosh(x+y)+\cosh(x-y)$,
and $(c)$ follows from rotational invariance, and $(d)$ follows
from 
\[
\int\varphi(t;0,1)\cosh(at)\d t=\int\varphi(t;0,1)\frac{e^{at}+e^{-at}}{2}\d t=e^{\frac{a^{2}}{2}}.
\]
Continuing (\ref{eq: first bound on chi-square divergence btween two GM}),
we thus have
\begin{align}
 \Dchis\left(\tilde{P}_{\theta_{1}}\mid\mid\tilde{P}_{\theta_{0}}\right) 
 & \leq e^{\frac{\|\theta_{0}\|^{2}}{2\sigma^{2}}}\cdot\left[\cosh\left(\frac{\|\theta_{1}\|^{2}}{\sigma^{2}}\right)-2\cosh\left(\frac{\theta_{1}^{\top}\theta_{0}}{\sigma^{2}}\right)+\cosh\left(\frac{\|\theta_{0}\|^{2}}{\sigma^{2}}\right)\right].\label{eq: a bound on chi-square with cosh}
\end{align}
Now, $\frac{\d}{\d x}\cosh(x)=\sinh(x)$ and $\frac{\d^{2}}{\d x^{2}}\cosh(x)=\cosh(x)$.
In addition, if $0\leq x\leq1$ then it can be easily verified that
$\sinh(x)\leq2x$. Thus, if $0\leq x\leq y\leq1$ then 
\begin{equation}
\cosh(y)-\cosh(x)=\int_{x}^{y}\frac{\d}{\d r}\cosh(r)\d r=\int_{x}^{y}\sinh(r)\d r\leq\int_{x}^{y}2r\d r=y^{2}-x^{2}.\label{eq: cosh around zero is quadratic}
\end{equation}
Moreover, $e^{x}\leq1+2x$ for $x\in[0,1]$. Thus, if we assume $\|\theta_{0}\|=\|\theta_{1}\|\leq\sigma$
we may further upper bound (\ref{eq: a bound on chi-square with cosh})
as 
\begin{align}
 & \Dchis\left(\tilde{P}_{\theta_{1}}\mid\mid\tilde{P}_{\theta_{0}}\right)\nonumber \\
 & \le e^{\frac{\|\theta_{0}\|^{2}}{2\sigma^{2}}}\cdot\left[\cosh\left(\frac{\|\theta_{0}\|^{2}}{\sigma^{2}}\right)-\cosh\left(\frac{|\theta_{1}^{\top}\theta_{0}|}{\sigma^{2}}\right)+\cosh\left(\frac{\|\theta_{0}\|^{2}}{\sigma^{2}}\right)-\cosh\left(\frac{|\theta_{1}^{\top}\theta_{0}|}{\sigma^{2}}\right)\right]\\
 & \trre[\leq,a]2\cdot\left[\cosh\left(\frac{\|\theta_{0}\|^{2}}{\sigma^{2}}\right)-\cosh\left(\frac{|\theta_{1}^{\top}\theta_{0}|}{\sigma^{2}}\right)+\cosh\left(\frac{\|\theta_{0}\|^{2}}{\sigma^{2}}\right)-\cosh\left(\frac{|\theta_{1}^{\top}\theta_{0}|}{\sigma^{2}}\right)\right]\\
 & \trre[\leq,b]2\cdot\frac{\|\theta_{1}\|^{4}-(\theta_{1}^{\top}\theta_{0})^{2}+\|\theta_{0}\|^{4}-(\theta_{1}^{\top}\theta_{0})^{2}}{\sigma^{4}},\label{eq: second bound on chi suqare with  the quadratic term of cosh}
\end{align}
where
$(a)$
follows since $e^{\frac{\|\theta_{0}\|^{2}}{2\sigma^{2}}}\leq2$
under the assumption $\|\theta_{0}\|^{2}\leq\sigma^{2}$, $(b)$ follows
from (\ref{eq: cosh around zero is quadratic}). Now, the numerator
of (\ref{eq: second bound on chi suqare with  the quadratic term of cosh})
is further upper bounded as
\begin{align}
&\|\theta_{1}\|^{4}-2(\theta_{1}^{\top}\theta_{0})^{2}+\|\theta_{0}\|^{4} \\
&= (\theta_1^\top\theta_1)^2 - (\theta_1^\top\theta_0)(\theta_0^\top\theta_1) - (\theta_0^\top\theta_1)(\theta_1^\top\theta_0) + (\theta_0^\top\theta_0)^2 \\
& =\Tr\left[\theta_{0}\theta_{0}^{\top}\theta_{0}\theta_{0}^{\top}-\theta_{0}\theta_{0}^{\top}\theta_{1}\theta_{1}^{\top}-\theta_{1}\theta_{1}^{\top}\theta_{0}\theta_{0}^{\top}+\theta_{1}\theta_{1}^{\top}\theta_{1}\theta_{1}^{\top}\right]\\
&= \Tr[(\theta_{0}\theta_{0}^{\top} - \theta_{1}\theta_{1}^{\top})^2] \\
 & =\|\theta_{0}\theta_{0}^{\top}-\theta_{1}\theta_{1}^{\top}\|_{F}^{2}\\
 & =\|\theta_{0}\theta_{0}^{\top}-\theta_{0}\theta_{1}^{\top}+\theta_{0}\theta_{1}^{\top}-\theta_{1}\theta_{1}^{\top}\|_{F}^{2}\\
 & \leq2\|\theta_{0}\theta_{0}^{\top}-\theta_{0}\theta_{1}^{\top}\|_{F}^{2}+2\|\theta_{0}\theta_{1}^{\top}-\theta_{1}\theta_{1}^{\top}\|_{F}^{2}\\
 & =2\|\theta_{0}\|^{2}\cdot\|\theta_{0}-\theta_{1}\|^{2}+2\|\theta_{1}\|^{2}\cdot\|\theta_{0}-\theta_{1}\|^{2},
\end{align}
where the inequality follows from $\|A+B\|_{F}^{2}\leq2\|A\|_{F}^{2}+2\|B\|_{F}^{2}$.
Inserting this bound into (\ref{eq: second bound on chi suqare with  the quadratic term of cosh}),
and using $\|\theta_{0}\|=\|\theta_{1}\|=t$ results the bound (\ref{eq: a bound on chi square for GM in the Fano argument lemma}). 
\end{proof}
With the results of the three steps at hand, we may complete the proof
of Theorem \ref{thm: impossibility lower bound for mean estimation}.
\begin{proof}[Proof of Theorem \ref{thm: impossibility lower bound for mean estimation}.]
Recall that we assume the low-dimension regime $3\le d\leq\delta n$. 
The condition $ d\ge3 $ can be relaxed to $ d\ge2 $, as promised in Theorem \ref{thm: impossibility lower bound for mean estimation}, using a different construction of the packing set. 
We leave this refinement to the end of this section. 
If $t\geq\sqrt{\delta}$ then the lower bound for the Gaussian location
model (\ref{eq: minimax rates -- Gaussian location model}) implies
a bound of $\Theta(\sqrt{\frac{d}{n}})$. This can be verified by
separately checking the only two cases $t\geq\sqrt{\frac{d}{n}}\geq\sqrt{\delta}$
and $t\geq\sqrt{\delta}\geq\sqrt{\frac{d}{n}}$ possible in the low
dimension regime. We thus henceforth may concentrate on the regime
$t\leq\sqrt{\delta}$. To continue, we henceforth assume the slightly
stronger requirement $t\leq\sqrt{\frac{\delta}{\log n}}=\sqrt{\frac{1}{k}}$,
where $\frac{1}{k}$ is the variance in the genie-aided reduced model. 

Let $\overline{\delta}=\frac{1-\rho^{k}}{2}$ and recall that $\overline{P}_{\theta_{m}}^{(\ell)}$
is the Gaussian model with Markovian signs with flip probability $\overline{\delta}$.
Further let $\varphi(y_{1}^{\ell};\mu)$ be the Gaussian PDF for $\ell$
samples from a $d_{0}$-dimensional model with mean $\mu\in(\mathbb{R}^{d_{0}})^{\ell}$
and covariance matrix $\Sigma=\frac{1}{k}I_{d_{0}}\otimes I_{\ell}\in\mathbb{R}_{+}^{d_{0}\ell\times d_{0}\ell}$.
With this notation, it holds that 
\[
\overline{P}_{\theta_{m}}^{(\ell)}(y_{1}^{\ell})=\sum_{r_{1}^{\ell}\in\{-1,1\}^{\ell}}p_{\overline{\delta}}(r_{1}^{\ell})\cdot\varphi(y_{1}^{\ell};r_{1}^{\ell}\otimes\theta_{m}).
\]
Similarly, let $\tilde{P}_{\theta_{m}}^{(\ell)}$ be the Gaussian
model with Markovian signs with flip probability $1/2$, that is,
a Gaussian mixture model (which is, in fact, memoryless), 
\begin{align}
\tilde{P}_{\theta_{m}}^{(\ell)}(y_{1}^{\ell}) & =\sum_{r_{1}^{\ell}\in\{-1,1\}^{\ell}}p_{1/2}(r_{1}^{\ell})\cdot\varphi(y_{1}^{\ell};r_{1}^{\ell}\otimes\theta_{m})\\
 & =\prod_{i=1}^{\ell}\left[\frac{1}{2}\varphi(y_{i};\theta_{m})+\frac{1}{2}\varphi(y_{i};-\theta_{m})\right].\label{eq: Gaussian Markov with half as product measure}
\end{align}
Now, Lemma \ref{lem: ratio between Markov with flip close to half and GM}
implies that 
\[
1-\frac{1}{n}\leq\min_{r_{1}^{\ell}\in\{-1,1\}^{\ell}}\frac{p_{\overline{\delta}}(r_{1}^{\ell})}{p_{1/2}(r_{1}^{\ell})}\leq\frac{\overline{P}_{\theta_{m}}^{(\ell)}(y_1^\ell)}{\tilde{P}_{\theta_{m}}^{(\ell)}(y_1^\ell)}\leq\max_{r_{1}^{\ell}\in\{-1,1\}^{\ell}}\frac{p_{\overline{\delta}}(r_{1}^{\ell})}{p_{1/2}(r_{1}^{\ell})}\leq1+\frac{2}{n},
\]
and hence 
\begin{align}
\Dkl\left(\overline{P}_{\theta_{m}}^{(\ell)}\mid\mid\overline{P}_{\theta_{0}}^{(\ell)}\right) & \trre[\leq,a]\Dkl\left(\tilde{P}_{\theta_{m}}^{(\ell)}\mid\mid\tilde{P}_{\theta_{0}}^{(\ell)}\right)+\log\left[\left(1+\frac{2}{n}\right)\left(\frac{1}{1 - \frac{1}{n}}\right)\right]\\
&\trre[\le,b] \Dkl\left(\tilde{P}_{\theta_{m}}^{(\ell)}\mid\mid\tilde{P}_{\theta_{0}}^{(\ell)}\right)+2\log\left(1+\frac{2}{n}\right)\\
 & \leq\Dkl\left(\tilde{P}_{\theta_{m}}^{(\ell)}\mid\mid\tilde{P}_{\theta_{0}}^{(\ell)}\right)+\frac{4}{n}\\
 & \trre[=,c]\ell\cdot\Dkl\left(\tilde{P}_{\theta_{m}}\mid\mid\tilde{P}_{\theta_{0}}\right)+\frac{4}{n}\\
 & \trre[\leq,d]\ell\cdot\Dchis\left(\tilde{P}_{\theta_{m}}\mid\mid\tilde{P}_{\theta_{0}}\right)+\frac{4}{n}\\
 & \trre[\leq,e]8\frac{\ell t^{2}}{\sigma^{4}}\cdot\|\theta_{0}-\theta_{1}\|^{2}+\frac{4}{n}\\
 & \trre[\leq,f]32\log(n)\cdot\frac{nt^{4}\epsilon^{2}}{\delta}+\frac{4}{n}
\end{align}
where $(a)$ follows from Lemma \ref{lem: change of measure bound for KL },
$ (b) $ follows since $ \frac{1}{1 - \frac{1}{n}}\le1 + \frac{2}{n} $ for $ n\ge2 $, 
$(c)$ follows from the tensorization property of the KL divergence,
$(d)$ follows from the fact (cf.\ \citep[Eq.\ (2.27)]{tsybakov2008introduction}) that $ \Dkl(P\mid\mid Q)\le\Dchis(P\mid\mid Q) $ for any pair of probability measures $P$ and $Q$, 
$(e)$ follows from Lemma \ref{lem: chi-square for d dimensional Gaussian mixture},
and $(f)$ follows from property (iv) of the packing set $\|\theta_{0}-\theta_{1}\|^{2}\leq4t^{2}\epsilon^{2}$
and $\sigma^{2}=\frac{1}{k}=\frac{\delta}{\log n}$ and $\ell=\frac{n}{k}$. 

Recall that from Fano's argument (\ref{eq: Fano based lower bound for Markov case with KL}),
the largest $\epsilon>0$ so that $\Dkl(\overline{P}_{\theta_{m}}^{(\ell)}\mid\mid\overline{P}_{\theta_{0}}^{(\ell)})\leq\frac{d}{16}$
assures the bound $\M(n,d,\delta,t)\geq\frac{\epsilon t}{128}$. Assuming
that $\frac{4}{n}\leq\frac{d}{32}$, that is $n\geq\frac{128}{d}$,
this will occur if $32\log(n)\cdot\frac{nt^{4}\epsilon^{2}}{\delta}\leq\frac{d}{32}$.
This can be achieved by the choice $\epsilon=\frac{1}{32\sqrt{\log(n)}}\cdot\min\left\{ 1,\frac{1}{t^{2}}\sqrt{\frac{d\delta}{n}}\right\} $.
Using this value in the Fano's based bound then completes the proof
of the theorem.
\end{proof}

\paragraph*{Relaxing $d\ge3$ to $d\ge2$}
In the above proof, we assumed that $ d\ge3 $. 
This condition can be relaxed to $ d\ge2 $ by using a more careful construction of the packing set. 
\begin{lem}
\label{lem:projective-spherical-code}
Let $ d\ge2 $ be an integer and $ \alpha\in(0,\frac{\pi}{2}] $ be an angle. 
Then there exists a $ \left(2\sin\left(\frac{\alpha}{2}\right)\right) $-packing on $ \mathbb{S}^{d-1} $ of size at least $ \frac{\cos(\alpha)}{\sin^{d-1}(\alpha)} $. 
\end{lem}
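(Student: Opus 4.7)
The plan is to produce such a packing greedily and then lower bound its size by a covering argument. Call $\mathcal{P}\subset\mathbb{S}^{d-1}$ an $\alpha$-separated set if any two distinct points of $\mathcal{P}$ are at angular distance at least $\alpha$ (equivalently, at Euclidean distance at least $2\sin(\alpha/2)$), and take $\mathcal{P}$ to be any maximal such set. By maximality, every $x\in\mathbb{S}^{d-1}$ lies within angular distance $\alpha$ of some $p\in\mathcal{P}$, so the spherical caps $\{y\in\mathbb{S}^{d-1}:\langle y,p\rangle\ge\cos\alpha\}$ for $p\in\mathcal{P}$ cover $\mathbb{S}^{d-1}$. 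Writing $A_d(\alpha)$ for the (rotation-invariant) area of such a cap and $\Omega_d$ for the total surface area of $\mathbb{S}^{d-1}$, this yields $|\mathcal{P}|\ge\Omega_d/A_d(\alpha)$.

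Next I would bound $A_d(\alpha)$ by orthogonally projecting the cap around the north pole $e_d$ onto the equatorial hyperplane. The projection is a bijection onto the Euclidean ball $\sin(\alpha)\cdot\mathbb{B}^{d-1}$, and the induced surface-area element pulls back to $\d y/\sqrt{1-\|y\|^{2}}$, which is at most $\d y/\cos\alpha$ on that image since $\alpha\le\pi/2$. Integrating gives $A_d(\alpha)\le\mathrm{vol}(\sin(\alpha)\,\mathbb{B}^{d-1})/\cos\alpha=\sin^{d-1}(\alpha)\,\Omega_{d-1}/((d-1)\cos\alpha)$, where $\Omega_{d-1}$ is the surface area of $\mathbb{S}^{d-2}$ and I used $\mathrm{vol}(r\,\mathbb{B}^{d-1})=r^{d-1}\Omega_{d-1}/(d-1)$.

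Combining the two inequalities yields $|\mathcal{P}|\ge\left((d-1)\Omega_d/\Omega_{d-1}\right)\cdot\cos\alpha/\sin^{d-1}\alpha$, so it remains to verify the dimensional normalization $(d-1)\Omega_d\ge\Omega_{d-1}$. By the standard slicing identity $\Omega_d=\Omega_{d-1}\int_0^\pi\sin^{d-2}\theta\,\d\theta$, this reduces to $(d-1)\int_0^\pi\sin^{d-2}\theta\,\d\theta\ge1$, which follows by a one-line induction based on the Wallis recursion $\int_0^\pi\sin^n\theta\,\d\theta=\tfrac{n-1}{n}\int_0^\pi\sin^{n-2}\theta\,\d\theta$ with the base cases $\int_0^\pi\d\theta=\pi$ and $\int_0^\pi\sin\theta\,\d\theta=2$ (or directly from Wallis' estimate $\int_0^\pi\sin^n\theta\,\d\theta\ge\sqrt{2\pi/(n+1)}$). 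Chaining the three estimates gives $|\mathcal{P}|\ge\cos(\alpha)/\sin^{d-1}(\alpha)$.

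I do not anticipate any real obstacle: the argument is essentially Wyner's cap-volume bound $A_d(\alpha)/\Omega_d\le\sin^{d-1}(\alpha)/\cos(\alpha)$ combined with greedy covering, and the only bookkeeping step is the Wallis-type inequality $(d-1)\Omega_d\ge\Omega_{d-1}$ invoked at the end.
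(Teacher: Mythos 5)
There is a genuine gap, and it is precisely the point of the lemma (note its label: \emph{projective} spherical code). In the paper this packing is fed into a Fano argument for the sign-insensitive loss $\loss(\theta,\theta')=\min\{\|\theta-\theta'\|,\|\theta+\theta'\|\}$, so what is needed is a set of unit vectors that are $2\sin(\alpha/2)$-separated from each other \emph{and from each other's antipodes}. Your construction takes a maximal set that is $\alpha$-separated only in the ordinary angular (Euclidean chord) distance; nothing prevents two of your points from being nearly antipodal, in which case $\|\theta_m+\theta_{m'}\|$ is small and $\loss(\theta_m,\theta_{m'})\approx 0$. Since in this model $\theta$ and $-\theta$ induce the same distribution, such a pair is statistically indistinguishable and the downstream lower bound collapses — this is exactly why the loss is defined modulo sign and why the packing must be projective. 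The paper's own proof runs the same greedy argument as yours but forbids, at each step, proximity to both the already-chosen points and their antipodes, so each chosen point removes at most \emph{two} caps of angular radius $\alpha$, giving a count of $S(\pi/2)/S(\alpha)$ (half of your $\Omega_d/A_d(\alpha)$), which it then bounds below by $\cos(\alpha)/\sin^{d-1}(\alpha)$ via Blachman's cap-area inequality.

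The good news is that your argument repairs easily and your cap-area estimate is sound. If you also exclude the antipodal caps in the maximality/greedy step, your count becomes $\Omega_d/(2A_d(\alpha))$, and combining this with your projection bound $A_d(\alpha)\le \sin^{d-1}(\alpha)\,\Omega_{d-1}/((d-1)\cos\alpha)$ reduces the claim to $(d-1)\Omega_d\ge 2\,\Omega_{d-1}$, i.e.\ $(d-1)\int_0^\pi \sin^{d-2}\theta\,\d\theta\ge 2$, which holds since $\int_0^\pi\sin^{d-2}\theta\,\d\theta\ge 2\int_0^{\pi/2}\sin^{d-2}\theta\cos\theta\,\d\theta=\tfrac{2}{d-1}$ (and equals $\pi$ for $d=2$). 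With that single modification your more elementary projection bound is a perfectly good substitute for the paper's appeal to Blachman's inequality; as written, however, the proof establishes only a Euclidean packing, which is a strictly weaker statement than the one the paper needs.
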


\begin{proof}
We will greedily construct the desired packing set. 
Let $ \Delta = 2\sin\left(\frac{\alpha}{2}\right) $. 
Start with an arbitrary point on $ \mathbb{S}^{d-1} $. 
In each of the following steps, put into the packing set another arbitrary point that is $ \Delta $-far from any existing points in the packing set and their antipodal points. 
Repeat this process until no more points can be put without violating the distance guarantee. 
Note that this construction guarantees that for any pair of distinct points $ \theta,\theta' $ in the packing set, 
\begin{align}
\loss(\theta,\theta')
= \min\{\|\theta - \theta'\|, \|\theta + \theta'\|\}
> 2\sin\left(\frac{\alpha}{2}\right) = \Delta . 
\end{align}

We then lower bound the cardinality of the above packing set. 
For $ \alpha\in[0,\frac{\pi}{2}] $, let $ S(\alpha) $ denote the surface area of a spherical cap on $ \mathbb{S}^{d-1} $ of angular radius $ \alpha $. 
For $ \theta\in\mathbb{S}^{d-1} $ and $ \alpha\in[0,\frac{\pi}{2}] $, let $ \mathbb{K}(\theta,\alpha)\dfn\{\theta'\in\mathbb{S}^{d-1}:|\langle\theta,\theta'\rangle|\ge\cos(\alpha)\} $ denote the union of two spherical caps centered around $ \theta $ and $ -\theta $, respectively, of angular radius $ \alpha $ each. 
Note that each new point $ \theta $ in the construction induces a \emph{forbidden region} on $ \mathbb{S}^{d-1} $ of surface area at most $ |\mathbb{K}(\theta,\alpha)| = 2S(\alpha) $ in which following points cannot lie, 
since any point within $ \mathbb{K}(\theta,\alpha) $ has distance at most $ 2\sin\left(\frac{\alpha}{2}\right) = \Delta $ to either $ \theta $ or $ -\theta $. 
Also, the surface area of the forbidden region is \emph{upper} bounded by $ 2S(\alpha) $ since $ \mathbb{K}(\theta,\alpha) $ may overlap with $ \mathbb{K}(\theta',\alpha) $ for some previous $ \theta' $.
Therefore, by the step at which the greedy construction terminates, one must have put at least $ \frac{2S(\frac{\pi}{2})}{2S(\alpha)} = \frac{S(\frac{\pi}{2})}{S(\alpha)} $ many points, where the numerator is nothing but the surface area of $ \mathbb{S}^{d-1} $. 
\citep[Eq.\ (1)]{blachman-few} upper bounds the area ratio between two spherical caps on $ \mathbb{S}^{d-1} $ as follows:
\begin{align}
\frac{S(\alpha)}{S(\beta)} &= \frac{\int_0^\alpha\sin^{d-2}(x)\d x}{\int_0^\beta \sin^{d-2}(x)\d x} 
\le \sec(\alpha)\cdot\frac{\sin^{d-1}(\alpha)}{\sin^{d-1}(\beta)} . 
\end{align}
Using this bound, we conclude that the cardinality of the constructed packing set is at least $ \frac{\cos(\alpha)}{\sin^{d-1}(\alpha)} $, which finishes the proof. 
\end{proof}
By Lemma \ref{lem:projective-spherical-code}, a $ \Delta $-packing set can be obtained by setting $ 2\sin\left(\frac{\alpha}{2}\right) = \Delta $. 
Therefore, 
\begin{gather}
\sin\left(\frac{\alpha}{2}\right) = \frac{\Delta}{2} , \quad 
\cos\left(\frac{\alpha}{2}\right) = \sqrt{1 - \frac{\Delta^2}{4}} , \\
\sin(\alpha) = 2\sin\left(\frac{\alpha}{2}\right)\cos\left(\frac{\alpha}{2}\right) = \Delta\sqrt{1 - \frac{\Delta^2}{4}} , \quad 
\cos(\alpha) = 1 - 2\sin^2\left(\frac{\alpha}{2}\right) = 1 - \frac{\Delta^2}{2} , 
\end{gather}
and the cardinality of the packing set is at least
\begin{align}
\left(1 - \frac{\Delta^2}{2}\right)\left(\Delta\sqrt{1 - \frac{\Delta^2}{4}}\right)^{-(d-1)} . 
\end{align}
Setting $ \Delta\sqrt{1 - \frac{\Delta^2}{4}} = \frac{1}{8} $, we have $ \Delta = \frac{1}{2}\sqrt{8 - 3\sqrt{7}} \ge \frac{1}{8} $ and we get a $\frac{1}{8}$-packing set $ \Theta_M = \{\theta_m\}_{m\in[M]} $ of size at least 
$M \ge \frac{3\sqrt{7}}{8}\cdot 8^{-(d-1)} \ge 0.99\cdot 8^{-(d-1)}$.

The above construction can be used in place of the one described at the beginning of Appendix \ref{app:proof-converse-estimate-theta}. 
The resulting lower bound on the minimax error rate follows from similar reasoning with suitably adjusted numerical constants. 
We sketch the rest of the proof below. 
By Fano's method \citep[Proposition 15.2]{wainwright2019high} (see also \eqref{eqn:apply-fano-1}), 
\begin{align}
\M(n,d,\delta,t) &\ge \frac{\epsilon t}{16}\left[1 - \frac{I(J;X_1^n) + \log2}{\log M}\right] \\
&\ge \frac{\epsilon t}{16}\left[1 - \frac{\max_{m\in[M]}\Dkl(P_{\theta_m}^{(n)} \mid\mid P_{\theta_0}^{(n)}) + \log2}{(d-1)\log8+\log0.99}\right] \\
&\ge \frac{\epsilon t}{16}\left[\frac{1}{2} - 4\cdot\frac{\max_{m\in[M]}\Dkl(P_{\theta_m}^{(n)} \mid\mid P_{\theta_0}^{(n)})}{d}\right] , 
\end{align}
where the last inequality follows since $ d\ge2 $ implies (i) $ (d-1)\log8+\log0.99\ge\frac{d}{4} $, (ii) $ \frac{\log2}{(d-1)\log8+\log0.99}\le\frac{\log2}{\log8+\log0.99}\le\frac{1}{2} $. 
The proof of the bound $ \max_{m\in[M]}\Dkl(P_{\theta_m}^{(n)} \mid\mid P_{\theta_0}^{(n)})\le\frac{d}{16} $ can be completely reused which implies $ \M(n,d,\delta,t)\ge\frac{\epsilon t}{64} $.
The same choice of $ \epsilon $ then yields the desired lower bound on the minimax error rate in Theorem \ref{thm: impossibility lower bound for mean estimation} for any $ d\ge2 $. 

\section{Proofs for Section \ref{sec:Flip-probability-estimation}: Estimation
of $\delta$ for a given estimate of $\theta_{*}$ \label{sec:Proofs for delta estimation}}

\subsection{Proof of Theorem \ref{thm: estimation error of delta for mismatched delta}:
Analysis of the estimator}
\label{app:proof-achievability-estimate-delta}

\begin{proof}[Proof of Theorem \ref{thm: estimation error of delta for mismatched delta}.]
We first note that since the estimator \eqref{eq: estimator for rho} does not exploit the correlation between $ X_i$ and $X_j $ for $ |i - j|\ge2 $, we may assume that the $ n/2 $ pairs of random variables $ (S_{2i-1},S_{2i}) $ are independent by multiplying each pair of samples $ (X_{2i-1},X_{2i}) $ by an i.i.d.\ random sign. 
By explicitly using $X_{i}=S_{i}\theta_{*}+Z_{i}$ in the definition
of $\hat{\rho}$, and using the triangle inequality,  we obtain that
\begin{multline}
\left|\hat{\rho}-\rho\right|\leq\left|\frac{\|\theta_{*}\|^{2}}{\|\theta_{\sharp}\|^{2}}\frac{2}{n}\sum_{i=1}^{n/2}S_{2i}S_{2i-1}-\rho\right|+\left|\frac{2}{\|\theta_{\sharp}\|^{2}n}\sum_{i=1}^{n/2}S_{2i}\theta_{*}^{\T}Z_{2i-1}\right|\\
+\left|\frac{2}{\|\theta_{\sharp}\|^{2}n}\sum_{i=1}^{n/2}S_{2i-1}\theta_{*}^{\T}Z_{2i}\right|+\left|\frac{2}{\|\theta_{\sharp}\|^{2}n}\sum_{i=1}^{n/2}Z_{2i}^{\T}Z_{2i-1}\right|.\label{eq: estimator for rho explicit}
\end{multline}
We begin with the analysis of the first term. We note that 
\[
\frac{2}{n}\sum_{i=1}^{n/2}S_{2i}S_{2i-1}\eqd\frac{2}{n}\sum_{i=1}^{n/2}R_{i}
\]
where 
\[
R_{i}=\begin{cases}
1, & \text{w.p. }\frac{1+\rho}{2}\\
-1, & \text{w.p. }\frac{1-\rho}{2}
\end{cases}
\]
and $\{R_{i}\}_{i\in[n/2]}$ are i.i.d. with $\E[R_{i}]=1-2\delta=\rho$
and $\V[R_{i}]=4\delta(1-\delta)\leq4\delta$. Now, the first term
in (\ref{eq: estimator for rho explicit}) is bounded by 
\begin{align}
 & \left|\frac{\|\theta_{*}\|^{2}}{\|\theta_{\sharp}\|^{2}}\frac{2}{n}\sum_{i=1}^{n/2}R_{i}-\rho\right|\nonumber \\
 & =\left|\left(1+\frac{\|\theta_{*}\|^{2}-\|\theta_{\sharp}\|^{2}}{\|\theta_{\sharp}\|^{2}}\right)\frac{2}{n}\sum_{i=1}^{n/2}R_{i}-\rho\right|\\
 & \leq\left|\frac{2}{n}\sum_{i=1}^{n/2}R_{i}-\rho\right|+\frac{\left|\|\theta_{*}\|^{2}-\|\theta_{\sharp}\|^{2}\right|}{\|\theta_{\sharp}\|^{2}},
\end{align}
where the last inequality follows since $\left|\frac{2}{n}\sum_{i=1}^{n/2}R_{i}\right|\leq1$.
In this last equation, the last term can be considered a bias of the
estimator due to the mismatch between $\theta_{*}$ and $\theta_{\sharp}$.
Now, by Bernstein's inequality for a sum of independent, zero-mean,
bounded random variables $R_{i}-\E[R_{i}]\in[-2,2]$ (cf.\ \eqref{eqn:bernstein} from \citep[Propostion 2.14]{wainwright2019high})
\[
\P\left[\left|\frac{2}{n}\sum_{i=1}^{n/2}R_{i}-\E[R_{i}]\right|\geq t\right]\leq2\exp\left(-\frac{\frac{n}{2}t^{2}}{2\left(4\delta+\frac{2t}{3}\right)}\right),
\]
and so with probability larger than $1-\epsilon$
\begin{align}
\left|\frac{2}{n}\sum_{i=1}^{n/2}R_{i}-\E[R_{i}]\right| &\le \frac{4}{3n}\log\left(\frac{2}{\epsilon}\right) + \frac{4}{3n}\sqrt{9n\delta\log\left(\frac{2}{\epsilon}\right) + \log^2\left(\frac{2}{\epsilon}\right)} \\
&\leq\sqrt{\frac{16\delta\log\left(\frac{2}{\epsilon}\right)}{n}}+\frac{8\log\left(\frac{2}{\epsilon}\right)}{3n} \\ 
& \leq7\log\left(\frac{2}{\epsilon}\right)\sqrt{\frac{\delta}{n}},
\end{align}
since $\delta\geq\frac{1}{n}$. 

The second term in (\ref{eq: estimator for rho explicit}) (and similarly,
the third term in (\ref{eq: estimator for rho explicit})) is
\[
\frac{2}{\|\theta_{\sharp}\|^{2}n}\sum_{i=1}^{n/2}S_{2i}\theta_{\sharp}^{\T}Z_{2i-1}\sim N\left(0,\frac{2}{\|\theta_{\sharp}\|^{2}n}\right).
\]
Thus, by the standard Chernoff bound for Gaussian random variables
\[
\P\left[\left|\frac{2}{\|\theta_{\sharp}\|^{2}n}\sum_{i=1}^{n/2}S_{2i}\theta_{\sharp}^{\T}Z_{2i-1}\right|>t\right]\leq2e^{-\frac{\|\theta_{\sharp}\|^{2}nt^{2}}{4}},
\]
and so with probability larger than $1-\epsilon$
\[
\left|\frac{2}{\|\theta_{\sharp}\|^{2}n}\sum_{i=1}^{n/2}S_{2i}\theta_{\sharp}^{\T}Z_{2i-1}\right|\leq\sqrt{\frac{4}{n\|\theta_{\sharp}\|^{2}}\log\left(\frac{2}{\epsilon}\right)}.
\]
The fourth term in (\ref{eq: estimator for rho explicit}) satisfies
\[
\frac{2}{\|\theta_{\sharp}\|^{2}n}\sum_{i=1}^{n/2}Z_{2i}^{\T}Z_{2i-1}\eqd\frac{2}{\|\theta_{\sharp}\|^{2}n}\sum_{i=1}^{nd/2}W_{i}\tilde{W}_{i}
\]
where $\{W_{i}\}_{i\in[nd/2]}$ and $\{\tilde{W}_{i}\}_{i\in[nd/2]}$
are i.i.d. and $W_{i},\tilde{W}_{i}\sim N(0,1)$ are independent.
It is well known \citep[Lemma 2.7.7]{vershynin2018high} that the
product of two subGaussian random variables (even if they
are not independent) is sub-exponential. Here, we have a simpler and
exact characterization. Letting 
\[
W_{i}\tilde{W}_{i}=\left(\frac{W_{i}+\tilde{W}_{i}}{2}\right)^{2}-\left(\frac{W_{i}-\tilde{W}_{i}}{2}\right)^{2}\eqd V_{i}^{2}-\tilde{V}_{i}^{2}
\]
and since $V_{i}=\frac{1}{2}(W_{i}+\tilde{W}_{i})\sim N(0,1)$ and
$\tilde{V}_{i}=\frac{1}{2}(W_{i}-\tilde{W}_{i})\sim N(0,1)$ are uncorrelated,
they are independent. Letting $\chi_{nd/2}^{2}$ and $\tilde{\chi}_{nd/2}^{2}$
be a pair of independent chi-square random variables with $nd/2$
degrees of freedom, it then holds that
\begin{align}
\frac{2}{\|\theta_{\sharp}\|^{2}n}\sum_{i=1}^{n/2}Z_{2i}^{\T}Z_{2i-1} & \eqd\frac{2}{\|\theta_{\sharp}\|^{2}n}\left(\chi_{nd/2}^{2}-\tilde{\chi}_{nd/2}^{2}\right)\\
 & =\frac{2}{\|\theta_{\sharp}\|^{2}n}\left(\chi_{nd/2}^{2}-\E[\chi_{nd/2}^{2}]+\E[\tilde{\chi}_{nd/2}^{2}]-\tilde{\chi}_{nd/2}^{2}\right).
\end{align}
From the chi-square tail bound in (\ref{eq: chi-square upper tail bound})
and (\ref{eq: chi-square lower tail bound}) it holds that 
\[
\P\left[\left|\chi_{nd/2}^{2}-\E[\chi_{nd/2}^{2}]\right|\geq2\sqrt{nd/2t}+2t\right]\leq 2e^{-t}
\]
and so it holds with probability $1-\epsilon $ that 
\[
\left|\chi_{nd/2}^{2}-\E[\chi_{nd/2}^{2}]\right|\leq\sqrt{2nd\log\left(\frac{2}{\epsilon}\right)}+2\log\left(\frac{2}{\epsilon}\right)\leq4\sqrt{nd}\log\left(\frac{2}{\epsilon}\right).
\]
Hence, by the union bond 
\[
\frac{2}{\|\theta_{\sharp}\|^{2}n}\sum_{i=1}^{n/2}Z_{i}^{\T}Z_{i-1}\leq\frac{16\log\left(\frac{2}{\epsilon}\right)}{\|\theta_{\sharp}\|^{2}}\sqrt{\frac{d}{n}}.
\]
with probability $1-\epsilon $. The claim (\ref{eq: high probability bound on delta estimation})
follows from the analysis of the terms above, the choice of $ \epsilon = \frac{2}{n} $ and a union bound.
\end{proof}

\subsection{Proof of Proposition \ref{prop: impossibility lower bound for flip probability estimation}:
Impossibility lower bound}
\label{app:proof-converse-converse-estimate-delta}

\paragraph*{The effect of knowledge of $\theta_{*}$}

To begin, it is apparent that if $\|\theta_{*}\|^{2}=\|\theta_{\sharp}\|^{2}$,
the first term in (\ref{eq: high probability bound on delta estimation})
vanishes, and the loss is bounded by the remaining terms. Furthermore,
if $\|\theta_{*}\|\leq1$ then the dominant term in the brackets is
$\frac{1}{\|\theta_{*}\|^{2}}\sqrt{\frac{d}{n}}$, and evidently,
dominant error term suffers from a penalty of $\sqrt{d}$, even though,
essentially, $\rho$ is a one-dimensional parameter. If $\theta_{*}$
is known exactly up to a sign, that is $\theta_{\sharp}=\pm\theta_{*}$
(and it is not just their norms which are equal), then the estimation
error can be reduce to the case of $d=1$. This is a simple consequence
of the rotational invariance of the distribution of the Gaussian noise,
which implies that the projections $ (\pm\theta_{*}^{\T}X_{i})_{i = 1}^n $
are sufficient statistics for the estimation of $\rho$. The estimator
constructs the projections 

\begin{equation}
U_{i}\dfn\frac{\theta_{\sharp}^{\T}X_{i}}{\|\theta_{\sharp}\|}=\pm\|\theta_{*}\|\cdot S_{i}+\frac{\pm\theta_{*}^{\T}Z_{i}}{\|\theta_{*}\|}\eqd\|\theta_{*}\|\cdot S_{i}+W_{i},\label{eq: model for estimating delta given known theta_star}
\end{equation}
where $W_{i}\sim N(0,1)$. This is effectively a one-dimensional model
with parameter given by $\|\theta_{*}\|$, and so Corollary \ref{cor: estimation error of delta for mismatched delta known mean} immediately follows
from Theorem \ref{thm: estimation error of delta for mismatched delta}.

\paragraph*{Tightness of the impossibility lower bound }

Evidently, Corollary \ref{cor: estimation error of delta for mismatched delta known mean}
and Proposition \ref{prop: impossibility lower bound for flip probability estimation}
match in their dependence on the number of samples $\Theta(\frac{1}{\sqrt{n}})$,
but it is not clear what the optimal dependence on $\|\theta_{*}\|$ is.
The estimator we propose is based on the moment $\rho=\E[U_{i}U_{i-1}]$
and can be contrasted with likelihood based methods as follows. Letting
$p_{\delta}(s_{0}^{n})$ denote the probability of the sign sequence
$s_{0}^{n}$ with flip probability $\delta$, and letting $\varphi(x)$
denote the standard Gaussian density, the likelihood of $u_{1}^{n}$
is given by 
\[
P_{\delta,\theta_{*}}(u_{1}^{n})=\sum_{s_{0}^{n}\in\{\pm1\}^{n+1}}p_{\delta}(s_{0}^{n})\cdot\prod_{i\in[n]}\varphi(u_{i}-s_{i}\|\theta_{*}\|).
\]
This function is a large degree polynomial in $\delta$ on the order
of $n$. Even if one sums only over $s_{0}^{n}$ with the typical
number of flips, then this degree is $\Theta(\delta n)$, which means this
polynomial has a degree which blows up with $n$. Thus, the MLE may
indeed be sensitive to empirical errors. The update of the Baum-Welch
algorithm (or EM) can also be easily computed and contrasted with
our proposed estimator. Letting $P_{\delta,\theta_{*}}$ denote the
probability distribution of the corresponding model with
flip probability $\delta$ and mean parameter $\theta_{*}$, the Baum-Welch
estimate $\hat{\delta}^{(j)}$ at iteration $j$ is given by 
\[
\hat{\delta}^{(j)}=\frac{1}{n}\sum_{i=1}^{n}P_{\hat{\delta}^{(j-1)},\theta_{*}}(S_{i}\neq S_{i-1}\mid u_{1}^{n}).
\]
Evidently, the inner estimate is the probability that $S_{i}\neq S_{i-1}$
conditioned on the entire sample $u_{1}^{n}$, rather than just $u_{i},u_{i-1}$
as in our proposed estimator in (\ref{eq: estimator for rho}). However,
deriving sharp error rate bounds for this estimator seems to be a
challenging task. 

In terms of minimax lower bounds, the difficulty arises since a reduction
to an (almost) memoryless GMM as in lower bound for the estimation of $\theta_{*}$
in Theorem \ref{thm: impossibility lower bound for mean estimation}
does not seem fruitful. A standard application of Le-Cam's method
requires bounding the total variation between models $P_{\delta_{1},\theta_{*}}$and
$P_{\delta_{2},\theta_{*}}$ for some ``close'' $\delta_{1},\delta_{2}$
(e.g., $\delta_{1}=0$ and $\delta_{2}=\epsilon$). The total variation
is then typically bounded by the KL divergence. However, the KL divergence
does not tensorize (due to the memory), and using the chain rule requires
evaluating the KL divergence for the process $U_{1}^{n}$ which is
not a Markov process (but rather an HMM). Alternatively, further bounding
the $n$-dimensional KL divergence with a chi-square divergence, which
is a convenient choice for mixture models, leads to an excessively
large bound. 

\paragraph*{The proof's main ideas}

The proof is based on Le-Cam's method that is applied to a genie-aided
model in which the estimator knows every other sign $S_{0},S_{2},S_{4},\ldots,S_{n}$.
The main technical challenge is then to bound the total variation
for this genie-aided model. 
A complete proof is presented below.

\begin{proof}[Proof of Proposition \ref{prop: impossibility lower bound for flip probability estimation}.]
For a given $\delta\in[0,1]$ let $P_{\delta}^{(n)}$ denote the
probability distribution of $U_{1}^{n}$ under the model (\ref{eq: model for estimating delta given known theta_star}),
that is $U_{i}=t\cdot S_{i}+W_{i}$, $i\in[n]$, where $t\equiv\|\theta_{*}\|$
and $S_{i}$ is a binary symmetric Markov chain with flip probability
$\delta$, and $\P[S_{0}=1]=\frac{1}{2}$. It should be noted that
$P_{\delta}^{(n)}$ is a distribution on $\mathbb{R}^{n}$ which is
\emph{not} a product distribution. To lower bound the estimation error
of an estimator $\hat{\delta}(U_{1}^{n})$, we consider a genie-aided
estimator which is informed with the values of $S_{0},S_{2},S_{4},\ldots S_{n}$
(assuming for simplicity that $n$ is even). We then set $\epsilon\in(0,\frac{1}{2})$
and use Le-Cam's two point method \citep[Section 15.2]{wainwright2019high}
with $\delta_{0}=\frac{1}{2}$ and $\delta_{1}=\frac{1}{2}-\epsilon$.
Let us denote by $Q_{U_{1}^{n}S_{0}^{n}}$ (resp. $P_{U_{1}^{n}S_{0}^{n}}$)
the joint probability distribution of $U_{1}^{n}$ and $S_{0}^{n}$
under $\delta=\frac{1}{2}$ (resp. $\delta=\frac{1}{2}-\epsilon$),
and marginals and conditional versions by standard notation, e.g.,
$Q_{U_{2}U_{4}\mid S_{0}S_{2}}$. The proof of the proposition follows
from Le-Cam's two point method, which states that for any estimator
$\hat{\delta}(U_{1}^{n},S_{0},S_{2},S_{4},\ldots,S_{n})$, and thus
also for any less informed estimator $\hat{\delta}(U_{1}^{n})$,
\begin{equation}
\E\left[\left|\hat{\delta}-\delta\right|\right]\geq\frac{\epsilon}{2}\cdot\left(1-\dtv\left(P_{U_{1}U_{2}\cdots U_{n}S_{0}S_{2}S_{4}\cdots S_{n}},Q_{U_{1}U_{2}\cdots U_{n}S_{0}S_{2}S_{4}\cdots S_{n}}\right)\right).\label{eq: impossibility bound on the loss Le Cam}
\end{equation}
We next obtain a bound on the total variation distance in Lemma \ref{lem:A total variation bound for flip estimation}
of $4\sqrt{n}\epsilon$ for $t\leq\frac{1}{\sqrt{2}}$ and choosing
$\epsilon=\frac{1}{8\sqrt{n}}$ in (\ref{eq: impossibility bound on the loss Le Cam})
completes the proof of the lower bound. 
\end{proof}
\begin{lem}
\label{lem:A total variation bound for flip estimation}If $t\leq1/\sqrt{2}$
then 
\[
\dtv\left(P_{U_{1}U_{2}\cdots U_{n},S_{0}S_{2}S_{4}\cdots S_{n}},Q_{U_{1}U_{2}\cdots U_{n},S_{0}S_{2}S_{4}\cdots S_{n}}\right)\leq\sqrt{\frac{5}{2}n}\epsilon+\sqrt{8n}t\epsilon.
\]
\end{lem}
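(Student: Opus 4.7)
The plan is to exploit the conditional independence structure revealed by the genie: once the even signs $T_0^\ell \dfn (S_0,S_2,\ldots,S_n)$ (with $\ell=n/2$) are fixed, the Markov property forces the odd signs $S_1,S_3,\ldots,S_{n-1}$ to be conditionally independent, each $S_{2i-1}$ depending only on its neighbors $(T_{i-1},T_i)$. Moreover the even samples $V_i\dfn U_{2i}\mid T_i\sim N(tT_i,1)$ have \emph{identical} conditional law under $P$ and $Q$, so they cancel in any TV/KL computation. I therefore split the joint TV by the standard bound
\[
\dtv\left(P_{U_1^n,T_0^\ell},Q_{U_1^n,T_0^\ell}\right)\le\dtv(P_{T_0^\ell},Q_{T_0^\ell})+\E_{P_{T_0^\ell}}\left[\dtv(P_{U_1^n\mid T_0^\ell},Q_{U_1^n\mid T_0^\ell})\right],
\]
handle each summand via the chain rule for KL and Pinsker's inequality, and combine.

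For the first summand, under $Q$ the chain $T_0^\ell$ is i.i.d.\ uniform, while under $P$ it is a homogeneous binary symmetric Markov chain with two-step flip probability $\widetilde{\delta}\dfn2\delta(1-\delta)=\frac{1}{2}-2\epsilon^{2}$. The chain rule for KL and the elementary bound $\Dkl(\mathrm{Bern}(\tfrac12+a)\mid\mid\mathrm{Bern}(\tfrac12))\le 4a^2/(1-4a^2)$ with $a=2\epsilon^2$ yield $\Dkl(P_{T_0^\ell}\mid\mid Q_{T_0^\ell})\lesssim\ell\epsilon^{4}$, hence $\dtv(P_{T_0^\ell},Q_{T_0^\ell})\lesssim\sqrt{n}\epsilon^{2}$, which for any $\epsilon\in(0,\tfrac12)$ is bounded by $\sqrt{5n/2}\,\epsilon$.

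For the second summand I use the conditional independence described above to write both $P_{U_1^n\mid T_0^\ell}$ and $Q_{U_1^n\mid T_0^\ell}$ as product measures over $i\in[\ell]$; the $V_i$ factors cancel, and only the factors $P_{Y_i\mid T_{i-1},T_i}$ with $Y_i\dfn U_{2i-1}$ survive. When $T_{i-1}\ne T_i$ symmetry forces $S_{2i-1}$ to be uniform under \emph{either} law, so the corresponding factors coincide exactly. When $T_{i-1}=T_i=s$, elementary Bayes gives $\P_{P}[S_{2i-1}=s\mid T_{i-1}=T_i=s]=p\dfn\frac{(1-\delta)^{2}}{(1-\delta)^{2}+\delta^{2}}=\tfrac12+\tfrac{2\epsilon}{1+4\epsilon^{2}}$ while the same conditional probability under $Q$ equals $\tfrac12$. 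Thus $P_{Y_i\mid T_{i-1}=T_i=s}=p\,\varphi(\cdot-ts)+(1-p)\varphi(\cdot+ts)$ and $Q_{Y_i\mid T_{i-1}=T_i=s}=\tfrac12\varphi(\cdot-ts)+\tfrac12\varphi(\cdot+ts)$. Using $\Dkl\le\Dchis$ and the key identity
\[
\frac{\varphi(y-t)-\varphi(y+t)}{\varphi(y-t)+\varphi(y+t)}=\tanh(ty),
\]
the chi-square divergence between these two mixtures is computed as $\chi^{2}=4(p-\tfrac12)^{2}\,\E_{Y\sim N(t,1)}[\tanh^{2}(tY)]$. The bound $\tanh^{2}(x)\le x^{2}$ and $\E_{Y\sim N(t,1)}[Y^{2}]=1+t^{2}\le\tfrac32$ (here is where the hypothesis $t^{2}\le\tfrac12$ is used) yield $\chi^{2}\le6(p-\tfrac12)^{2}t^{2}\le24\,t^{2}\epsilon^{2}$.

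Summing the per-coordinate KL bounds (at most $\ell$ are active), taking expectation under $P_{T_0^\ell}$, and invoking Pinsker together with Jensen ($\E[\sqrt{X}]\le\sqrt{\E X}$) gives $\E_{P_{T_0^\ell}}[\dtv(P_{U_1^n\mid T_0^\ell},Q_{U_1^n\mid T_0^\ell})]\le\sqrt{8n}\,t\epsilon$ after tracking constants. Adding the two contributions completes the proof of the claimed bound. The main obstacle is the chi-square computation for the one-dimensional mixture: without the $\tanh$ identity one would only obtain a trivial $O(1)$ bound rather than the crucial $O(t^{2}\epsilon^{2})$ factor, and the hypothesis $t\le 1/\sqrt{2}$ is used precisely to keep $\E[Y^{2}]=1+t^{2}$ under control.
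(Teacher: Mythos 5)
Your proof is correct and, at the structural level, mirrors the paper's: both decompose the joint total variation (equivalently, the joint KL) into a contribution from the marginal law of the observed even signs $T_0^\ell$ and a contribution from the conditional law of the samples given those signs, both exploit the conditional independence of the odd signs given $T_0^\ell$, both cancel the even-sample factors (which have identical conditional laws under $P$ and $Q$), both reduce the conditional piece to a one-dimensional chi-square divergence between a balanced and an unbalanced two-point Gaussian mixture, and both invoke $t^2\le\frac{1}{2}$ to keep the resulting exponential moment linear in $t^2$.

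Within this common skeleton your two intermediate arguments differ from the paper's, and in both cases you obtain sharper (or at least equally sharp and cleaner) bounds. For the marginal term, you observe that $T_0^\ell$ is itself a two-step Markov chain with flip bias $a=2\epsilon^2$ and directly bound the per-transition Bernoulli KL, giving $\Dkl(P_{T_0^\ell}\|Q_{T_0^\ell})=O(n\epsilon^4)$; the paper instead dominates this by $\Dkl(P_{S_0^n}\|Q_{S_0^n})$ (data processing) and uses an entropy expansion, which only gives $5n\epsilon^2$. Your bound is genuinely tighter, though you do not exploit the extra $\epsilon^2$ factor since the lemma's target already has the weaker form. For the conditional term, you compute the chi-square via the identity $\frac{\varphi(y-t)-\varphi(y+t)}{\varphi(y-t)+\varphi(y+t)}=\tanh(ty)$ and the elementary inequality $\tanh^2(x)\le x^2$, arriving at $\chi^2\le24t^2\epsilon^2$; the paper expands the integrand and evaluates Gaussian moment-generating integrals, arriving at $32t^2\epsilon^2$. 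Your route is both cleaner and slightly tighter.

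One small point worth tightening: for the marginal term you write "$\dtv(P_{T_0^\ell},Q_{T_0^\ell})\lesssim\sqrt{n}\epsilon^2$, which for any $\epsilon\in(0,\tfrac12)$ is bounded by $\sqrt{5n/2}\,\epsilon$." The hidden constant in your $\lesssim$ comes from $4a^2/(1-4a^2)$ with $a=2\epsilon^2$, whose denominator degenerates as $\epsilon\uparrow\frac12$, so the statement "for any $\epsilon\in(0,\tfrac12)$" is not literally justified by what you wrote. This is easily repaired: use the cleaner bound $\Dkl(\mathrm{Bern}(\tfrac12+a)\,\|\,\mathrm{Bern}(\tfrac12))\le\Dchis(\mathrm{Bern}(\tfrac12+a)\,\|\,\mathrm{Bern}(\tfrac12))=4a^2$, which yields $\Dkl(P_{T_0^\ell}\|Q_{T_0^\ell})\le\ell\cdot16\epsilon^4=8n\epsilon^4$ and hence $\dtv\le2\sqrt{n}\epsilon^2\le\sqrt{5n/2}\,\epsilon$ whenever $\epsilon\le\sqrt{5/8}$, which covers $\epsilon\in(0,\tfrac12)$. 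With that adjustment your argument fully establishes the lemma.
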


\begin{proof}
By Pinsker's inequality (e.g., \citep[Lemma 2.5]{tsybakov2008introduction}),
it holds for any pair of probability measures $P,Q$ that $\dtv(P,Q)\leq\sqrt{\frac{1}{2}\Dkl(Q,P)}.$
We next upper bound the KL divergence. To this end, recall the chain
rule, that for any joint distributions $P_{XY}$ and $Q_{XY}$ (with
conditional distributions $P_{Y\mid X},Q_{Y\mid X})$ the chain rule
for the KL divergence states that $\Dkl(P_{XY}\mid\mid Q_{XY})=\Dkl(P_{X}\mid\mid Q_{X})+\Dkl(P_{Y\mid X}\mid\mid Q_{Y\mid X}\mid P_{X})$
where $\Dkl(P_{Y\mid X}\mid\mid Q_{Y\mid X}\mid P_{X})\dfn\int\Dkl(P_{Y\mid X}\mid\mid Q_{Y\mid X})\d P_{X}$
is the conditional KL divergence \citep[Theorem 2.5.3]{Cover:2006:EIT:1146355}.
Thus, 
\begin{align}
 & \Dkl\left(P_{U_{1}U_{2}\cdots U_{n},S_{0}S_{2}S_{4}\cdots S_{n}}\mid\mid Q_{U_{1}U_{2}\cdots U_{n},S_{0}S_{2}S_{4}\cdots S_{n}}\right)\nonumber \\
 & =\Dkl\left(P_{S_{0}S_{2}S_{4}\cdots S_{n}}\mid\mid Q_{S_{0}S_{2}S_{4}\cdots S_{n}}\right)\label{eq: KL decomposition to signs and conditional samples 1}\\
 & \hphantom{==}+\Dkl\left(P_{U_{1}U_{2}\cdots U_{n}\mid S_{0}S_{2}S_{4}\cdots S_{n}}\mid\mid Q_{U_{1}U_{2}\cdots U_{n}\mid S_{0}S_{2}S_{4}\cdots S_{n}}\mid P_{S_{0}S_{2}S_{4}\cdots S_{n}}\right).\label{eq: KL decomposition to signs and conditional samples}
\end{align}
We next bound each of the two KL divergences appearing in \eqref{eq: KL decomposition to signs and conditional samples 1} and (\ref{eq: KL decomposition to signs and conditional samples}).
First, 
\begin{align}
\Dkl\left(P_{S_{0}S_{2}S_{4}\cdots S_{n}}\mid\mid Q_{S_{0}S_{2}S_{4}\cdots S_{n}}\right) & \trre[\leq,a]\Dkl\left(P_{S_{0}^{n}}\mid\mid Q_{S_{0}^{n}}\right)\\
 & =\sum_{s_{0}^{n}\in\{-1,1\}^{n+1}}P_{S_{0}^{n}}(s_{0}^{n})\log\frac{P_{S_{0}^{n}}(s_{0}^{n})}{Q_{S_{0}^{n}}(s_{0}^{n})}\\
 & \trre[=,b](n+1)\log2-\sum_{s_{0}^{n}\in\{-1,1\}^{n+1}}P_{S_{0}^{n}}(s_{0}^{n})\log\frac{1}{P_{S_{0}^{n}}(s_{0}^{n})}\\
 & \trre[=,c](n+1)\log2-H(S_{0},S_{1},\ldots, S_{n})\\
 & \trre[=,d](n+1)\log2-H(S_{0})-\sum_{i=1}^{n}H(S_{i}\mid S_{0}^{i-1})\\
 & \trre[=,e]n\cdot\log2-\sum_{i=1}^{n}H(S_{i}\mid S_{i-1})\\
 & \trre[=,f]n\cdot\left[\log2-h_{b}\left(\frac{1}{2}-\epsilon\right)\right],\label{eq: bound on the KL between signs process}
\end{align}
where $(a)$ follows from the convexity of the KL divergence (recall
$S_{0}^{n}=S_{0},S_{1},S_{2},\ldots,S_{n}$), $(b)$ since $Q_{S_{0}^{n}}(s_{0}^{n})=2^{-(n+1)}$
for any $s_{0}^{n}\in\{\pm1\}^{n+1}$, $(c)$ follows by defining
the entropy $H$ of $S_{0}^{n}$ (under the probability measure $P$),
$(d)$ follows from the chain rule of entropy and the definition of
conditional entropy \citep[Theorem 2.2.1]{Cover:2006:EIT:1146355},
$(e)$ follows from Markovity and $H(S_{0})=\log2$, $(f)$ follows
from $H(S_{i}\mid S_{i-1})=h_{b}(\delta_{1})=h_{b}(\frac{1}{2}-\epsilon)$
where $h_{b}(\delta)\dfn-\delta\log\delta-(1-\delta)\log(1-\delta)$
is the binary entropy function. Now, for $\epsilon\in(0,\frac{1}{2})$
the power series expansion of the binary entropy function results
the bound 
\begin{align}
h_{b}\left(\frac{1}{2}-\epsilon\right) & =\log2-\sum_{k=1}^{\infty}\frac{(2\epsilon)^{2k}}{2k(2k-1)}\\
 & \geq\log2-(2\epsilon)^{2}\sum_{k=1}^{\infty}\frac{1}{2k(2k-1)}\\
 & \geq\log2-4\epsilon^{2}\sum_{k=1}^{\infty}\frac{1}{(2k-1)^{2}}\\
 & =\log2-\frac{\pi^{2}}{2}\epsilon^{2}\\
 & \geq\log2-5\epsilon^{2}.
\end{align}
Inserting this bound into (\ref{eq: bound on the KL between signs process})
results that the first term in (\ref{eq: KL decomposition to signs and conditional samples 1})
is upper bounded as 
\begin{equation}
\Dkl\left(P_{S_{0}S_{2}S_{4}\cdots S_{n}}\mid\mid Q_{S_{0}S_{2}S_{4}\cdots S_{n}}\right)\leq5n\epsilon^{2}.\label{eq: bound on the first KL for lower bound delta estiamtion}
\end{equation}

We now move on to bound the second term in (\ref{eq: KL decomposition to signs and conditional samples}).
To this end, we note that the distribution of $U_{1}^{n}$ conditioned
on $S_{0},S_{2},S_{4},\ldots S_{n}$ can be decomposed in a simple
way. First, under $Q$, the signs $S_{0}^{n}$ are i.i.d., and so
$U_{1}^{n}$ is a vector of independent samples from a Gaussian mixture
model. Furthermore, $U_{i}$ depends on the sign $S_{i}$ but otherwise is
independent of all other $S_{1}^{n}\backslash\{S_{i}\}$. Hence,
\[
Q_{U_{1}U_{2}\cdots U_{n}\mid S_{0}S_{2}S_{4}\cdots S_{n}}=Q_{U_{1}}\cdot Q_{U_{2}\mid S_{2}}\cdot Q_{U_{3}}\cdot Q_{U_{4}\mid S_{4}}\cdots Q_{U_{n-1}}\cdot Q_{U_{n}\mid S_{n}},
\]
that is, a model of independent samples, where the odd samples are
drawn from a Gaussian mixture model, and the even samples from a Gaussian
location model with known sign $S_{2i}$. Second, under $P$, an application
of Bayes rule and the Markovity assumption results the decomposition
to pairs of samples given by 
\begin{align}
 & P_{U_{1}U_{2}\cdots U_{n}\mid S_{0}S_{2}S_{4}\cdots S_{n}}\nonumber \\
 & =P_{U_{1}U_{2}\mid S_{0}S_{2}S_{4}\cdots S_{n}}\cdot P_{U_{3}U_{4}\mid S_{0}S_{2}S_{4}\cdots S_{n}U_{1}U_{2}}\cdots P_{U_{n-1}U_{n}\mid S_{0}S_{2}S_{4}\cdots S_{n}U_{1}^{n-2}}\\
 & =P_{U_{1}U_{2}\mid S_{0}S_{2}}\cdot P_{U_{3}U_{4}\mid S_{2}S_{4}}\cdots P_{U_{n-1}U_{n}\mid S_{n-2}S_{n}}.\label{eq: Markovian decmposition of the distribution}
\end{align}
The chain rule for the KL divergence therefore implies that 
\begin{align}
 & \Dkl\left(P_{U_{1}U_{2}\cdots U_{n}\mid S_{0}S_{2}S_{4}\cdots S_{n}}\mid\mid Q_{U_{1}U_{2}\cdots U_{n}\mid S_{0}S_{2}S_{4}\cdots S_{n}}\mid P_{S_{0}S_{2}S_{4}\cdots S_{n}}\right)\nonumber \\
 & \trre[=,a]\Dkl\left(P_{U_{1}U_{2}\mid S_{0}S_{2}}\mid\mid Q_{U_{1}}\cdot Q_{U_{2}\mid S_{2}}\mid P_{S_{0}S_{2}}\right)+\Dkl\left(P_{U_{3}U_{4}\mid S_{2}S_{4}}\mid\mid Q_{U_{3}}\cdot Q_{U_{4}\mid S_{4}}\mid P_{S_{2}S_{4}}\right)\nonumber \\
 & \hphantom{==}+\cdots+\Dkl\left(P_{U_{n-1}U_{n}\mid S_{n-2}S_{n}}\mid\mid Q_{U_{n-1}}\cdot Q_{U_{n}\mid S_{n}}\mid P_{S_{n-2}S_{n}}\right)\\
 & \trre[=,b]\frac{n}{2}\cdot\Dkl\left(P_{U_{1}U_{2}\mid S_{0}S_{2}}\mid\mid Q_{U_{1}}\cdot Q_{U_{2}\mid S_{2}}\mid P_{S_{0}S_{2}}\right)\\
 & \trre[=,c]\frac{n}{2}\cdot\Dkl\left(P_{U_{1}\mid S_{0}S_{2}}\mid\mid Q_{U_{1}}\mid P_{S_{0}S_{2}}\right)+\frac{n}{2}\cdot\Dkl\left(P_{U_{2}\mid S_{0}S_{2}}\mid\mid Q_{U_{2}\mid S_{2}}\mid P_{S_{0}S_{2}}\right)\\
 & \trre[=,d]\frac{n}{2}\cdot\Dkl\left(P_{U_{1}\mid S_{0}S_{2}}\mid\mid Q_{U_{1}}\mid P_{S_{0}S_{2}}\right)\\
 & \trre[=,e]\frac{n}{2}\cdot\Dkl\left(P_{U_{1}\mid S_{0}=1,S_{2}}\mid\mid Q_{U_{1}}\mid P_{S_{2}\mid S_{0}=1}\right)\label{eq: KL divergence for delta estimation lower bound second term derivation}
\end{align}
where $(a)$ follows from (\ref{eq: Markovian decmposition of the distribution})
and the chain rule for KL divergence, and $(b)$ follows from the
stationarity of the Markov chain, $(c)$ follows again from the chain
rule, $(d)$ follows since $\Dkl(P_{U_{2}\mid S_{0}S_{2}}\mid\mid Q_{U_{2}\mid S_{2}}\mid P_{S_{0}S_{2}})=\Dkl(P_{U_{2}\mid S_{2}}\mid\mid Q_{U_{2}\mid S_{2}}\mid P_{S_{2}})=0$,
and $(e)$ follows since by symmetry, we may condition on $S_{0}=1$.
Thus, the last KL divergence in (\ref{eq: KL divergence for delta estimation lower bound second term derivation})
should be averaged over the cases $S_{2}=-1$ and $S_{2}=1$. For
the first case, it holds that $P_{S_{1}\mid S_{0}S_{2}}(\cdot\mid1,-1)$
is a uniform distribution on $\{\pm1\}$. Hence, conditioned on $S_{0}=1,S_{2}=-1$,
under $P$, $U_{1}$ is a sample from a balanced Gaussian mixture,
just as under $Q$. So, 
\[
\Dkl\left(P_{U_{1}\mid S_{0}=1,S_{2}=-1}\mid\mid Q_{U_{1}}\right)=0.
\]
So, the KL divergence is only comprised of the term in the second
case $S_{2}=1$. Continuing to evaluate the KL divergence in (\ref{eq: KL divergence for delta estimation lower bound second term derivation}),
we get 
\begin{align}
\Dkl\left(P_{U_{1}\mid S_{0}=1,S_{2}}\mid\mid Q_{U_{1}}\mid P_{S_{2}\mid S_{0}=1}\right) & =P_{S_{2}\mid S_{0}=1}(1)\cdot\Dkl\left(P_{U_{1}\mid S_{0}=1,S_{2}=1}\mid\mid Q_{U_{1}}\right)\\
 & \trre[\leq,a]\Dkl\left(P_{U_{1}\mid S_{0}=1,S_{2}=1}\mid\mid Q_{U_{1}}\right)\\
 & \trre[\leq,b]\Dchis\left(P_{U_{1}\mid S_{0}=1,S_{2}=1}\mid\mid Q_{U_{1}}\right)\label{eq: upper bounding KL with chi square for delta impossibility}
\end{align}
where $(a)$ follows since $P_{S_{2}\mid S_{0}}(1\mid1)=(1-\delta_{1})^{2}+\delta_{1}^{2}\leq1$,
and $(b)$ follows from the chi-square divergence bound on the KL
divergence (e.g., \citep[Eq. (2.27)]{tsybakov2008introduction}).
Now, recall that under $Q_{U_{1}}$, it holds that $U_{1}\sim\frac{1}{2}N(t,1)+\frac{1}{2}N(-t,1)$,
and under $P_{U_{1}\mid S_{0}=1,S_{2}=1}$ it holds that $U_{1}\sim(1-\alpha)\cdot N(t,1)+\alpha N(-t,1)$
where 
\[
\alpha=P_{S_{1}\mid S_{0}S_{2}}(-1\mid1,1)=\frac{\delta_{1}^{2}}{(1-\delta_{1})^{2}+\delta_{1}^{2}}.
\]
Thus, the chi-square divergence in (\ref{eq: upper bounding KL with chi square for delta impossibility})
is between a balanced Gaussian mixture (with probability that $S_{1}=1$
being $\frac{1}{2}$) and an unbalanced Gaussian mixture (with probability
that $S_{1}=1$ being $1-\alpha>\frac{1}{2}$). Let $\psi_{\beta}(u)$
denote the probability density function of $U=t\cdot S+W$ with $\P[S=-1]=1-\P[S=1]=\beta$
and $W\sim N(0,1)$. Then, if $\varphi(u)\equiv\frac{1}{\sqrt{2\pi}}e^{-u^{2}/2}$
is the standard Gaussian density function, it holds that 
\begin{align}
\psi_{\beta}(u) & =\beta\cdot\varphi(u+t)+(1-\beta)\cdot\varphi(u+t)\\
 & =\beta\cdot\frac{1}{\sqrt{2\pi}}e^{-(u+t)^{2}/2}+(1-\beta)\cdot\frac{1}{\sqrt{2\pi}}e^{-(u-t)^{2}/2}\\
 & =e^{-t^{2}/2}\cdot\varphi(u)\cdot\left[\beta\cdot e^{-ut}+(1-\beta)\cdot e^{ut}\right].
\end{align}
Specifically, for $\beta=\frac{1}{2}$ it holds that $\psi_{1/2}(u)=e^{-t^{2}/2}\cdot\varphi(u)\cdot\cosh(ut)$.
Therefore, the chi-square divergence from (\ref{eq: upper bounding KL with chi square for delta impossibility})
is upper bounded as 
\begin{align}
 & \Dchis\left(P_{U_{1}\mid S_{0}=1,S_{2}=1}\mid\mid Q_{U_{1}}\right)\nonumber \\
 & =\int_{-\infty}^{\infty}\frac{\left[P_{U_{1}\mid S_{0}S_{2}}(u\mid1,1)-Q_{U_{1}}(u)\right]^{2}}{Q_{U_{1}}(u)}\d u\\
 & =e^{-t^{2}/2}\cdot\int_{-\infty}^{\infty}\varphi(u)\frac{\left[(\alpha-\frac{1}{2})\cdot e^{-ut}+(1-\alpha-\frac{1}{2})\cdot e^{ut}\right]^{2}}{\cosh(ut)}\d u\\
 & \trre[\leq,a]e^{-t^{2}/2}\cdot\int_{-\infty}^{\infty}\varphi(u)\left[\left(\alpha-\frac{1}{2}\right)\cdot e^{-ut}+\left(1-\alpha-\frac{1}{2}\right)\cdot e^{ut}\right]^{2}\d u\\
 & =e^{-t^{2}/2}\cdot\int_{-\infty}^{\infty}\varphi(u)\left[\left(\alpha-\frac{1}{2}\right)^{2}\cdot e^{-2ut}+2\left(\alpha-\frac{1}{2}\right)\left(1-\alpha-\frac{1}{2}\right)+\left(1-\alpha-\frac{1}{2}\right)^{2}\cdot e^{2ut}\right]\d u\\
 & =e^{-t^{2}/2}\cdot\left[\left(\alpha-\frac{1}{2}\right)^{2}e^{2t^{2}}+2\left(\alpha-\frac{1}{2}\right)\left(1-\alpha-\frac{1}{2}\right)+\left(1-\alpha-\frac{1}{2}\right)^{2}e^{2t^{2}}\right]\\
 & \trre[\leq,b]\left[\left(\alpha-\frac{1}{2}\right)^{2}(1+4t^{2})+2\left(\alpha-\frac{1}{2}\right)\left(1-\alpha-\frac{1}{2}\right)+\left(1-\alpha-\frac{1}{2}\right)^{2}(1+4t^{2})\right]\\
 & =4t^{2}\cdot\left[\left(\alpha-\frac{1}{2}\right)^{2}+\left(1-\alpha-\frac{1}{2}\right)^{2}\right]\\
 & =2t^{2}\cdot\left[1-4\alpha(1-\alpha)\right]\\
 & \trre[=,c]2t^{2}\cdot\left[1-\left(\frac{2-8\epsilon^{2}}{2+8\epsilon^{2}}\right)^{2}\right]\\
 & =2t^{2}\cdot\left[1-\frac{2-8\epsilon^{2}}{2+8\epsilon^{2}}\right]\left[1+\frac{2-8\epsilon^{2}}{2+8\epsilon^{2}}\right]\\
 & =2t^{2}\cdot\left[\frac{64\epsilon^{2}}{(2+8\epsilon^{2})^{2}}\right]\\
 & \leq32t^{2}\epsilon^{2}, \label{eqn:dchisq-pu1-given-s01-s21-and-qu1}
\end{align}
where $(a)$ follows since $\cosh(x)\geq1$ for all $x\in\mathbb{R}$,
$(b)$ follows using the assumption $t\leq\frac{1}{\sqrt{2}}$, and
so $e^{2t^{2}}\leq1+4t^{2}$, and $(c)$ is by setting 
\[
4\alpha(1-\alpha)=4\frac{\delta_{1}^{2}(1-\delta_{1})^{2}}{\left[(1-\delta_{1})^{2}+\delta_{1}^{2}\right]^{2}}=4\frac{\left(\frac{1}{2}-\epsilon\right)^{2}\left(\frac{1}{2}+\epsilon\right)^{2}}{\left[\left(\frac{1}{2}+\epsilon\right)^{2}+\left(\frac{1}{2}-\epsilon\right)^{2}\right]^{2}}=\left(\frac{2-8\epsilon^{2}}{2+8\epsilon^{2}}\right)^{2}.
\]
Using the bound \eqref{eqn:dchisq-pu1-given-s01-s21-and-qu1} in (\ref{eq: upper bounding KL with chi square for delta impossibility}),
We thus conclude that 
\[
\Dkl\left(P_{U_{1}\mid S_{0}=1,S_{2}}\mid\mid Q_{U_{1}}\mid P_{S_{2}\mid S_{0}=1}\right)\leq32t^{2}\epsilon^{2}
\]
and then in (\ref{eq: KL divergence for delta estimation lower bound second term derivation})
that the second KL term of (\ref{eq: KL decomposition to signs and conditional samples})
is upper bounded as 
\[
\Dkl\left(P_{U_{1}U_{2}\cdots U_{n}\mid S_{0}S_{2}S_{4}\cdots S_{n}}\mid\mid Q_{U_{1}U_{2}\cdots U_{n}\mid S_{0}S_{2}S_{4}\cdots S_{n}}\mid P_{S_{0}S_{2}S_{4}\cdots S_{n}}\right)\leq16nt^{2}\epsilon^{2}.
\]
Combining this bound with the bound on the first KL term of (\ref{eq: KL decomposition to signs and conditional samples})
given in (\ref{eq: bound on the first KL for lower bound delta estiamtion})
we obtain 
\[
\Dkl\left(P_{U_{1}U_{2}\cdots U_{n},S_{0}S_{2}S_{4}\cdots S_{n}}\mid\mid Q_{U_{1}U_{2}\cdots U_{n},S_{0}S_{2}S_{4}\cdots S_{n}}\right)\leq5n\epsilon^{2}+16nt^{2}\epsilon^{2}.
\]
The aforementioned Pinsker bound on the total variation distance,
and the relation $\sqrt{a+b}\leq\sqrt{a}+\sqrt{b}$ then completes
the proof. 
\end{proof}

\section{Proofs for Section \ref{sec:Flip-probability-estimation}: Analysis
of Algorithm \ref{alg:Mean estimation with unknown delta} \label{sec: proofs for joint estimation}}

\begin{proof}[Proof of Theorem \ref{thm: Estimation error for algorithm}.]

Before presenting the analysis of Algorithm \ref{alg:Mean estimation with unknown delta}, we first specify the choices of the constants $ \lambda_\theta\ge1 $ and $ \lambda_\delta\ge1 $. 
Recall that from Theorem \ref{thm: mean estimation known delta upper bound}
for a Markov model $(\theta_{*},\delta)$ in low dimension, $d\leq\delta n$,
there exists a numerical constant $\lambda_{\theta}>0$,\footnote{The constants can be deduced from the proof, though they were not optimized.
We reiterate that both $ \lambda_\theta $ and $ \lambda_\delta $ below are universal numerical constants. 
In particular, they do \emph{not} depend on $ \theta_*,\delta $ or any other problem parameters. 
The subscripts are merely to emphasize that they are obtained in the estimation procedure for $ \theta_* $ and $ \delta $, respectively. }
so that the estimator $\hat{\theta}\equiv\hat{\theta}_{\text{cov}}(X_{1}^{n};k=\frac{1}{8\delta})$,
which assumes a perfect knowledge of $\delta$, achieves with probability
$1-O(\frac{1}{n})$, 
\begin{equation}
\loss(\hat{\theta},\theta_{*})\leq\lambda_{\theta}\cdot\log(n)\cdot\begin{cases}
\|\theta_{*}\|, & \|\theta_{*}\|\leq\left(\frac{\delta d}{n}\right)^{1/4}\\
\frac{1}{\|\theta_{*}\|}\sqrt{\frac{\delta d}{n}}, & \left(\frac{\delta d}{n}\right)^{1/4}\leq\|\theta_{*}\|\leq\sqrt{\delta}\\
\sqrt{\frac{d}{n}}, & \|\theta_{*}\|\geq\sqrt{\delta}
\end{cases}\label{eq: mean estimation error for joint estimation -- low dimension}
\end{equation}
if $d\leq\delta n$, and 
\begin{equation}
\loss(\hat{\theta},\theta_{*})\leq\lambda_{\theta}\cdot\log(n)\cdot\begin{cases}
\|\theta_{*}\|, & \|\theta_{*}\|\leq\sqrt{\frac{d}{n}}\\
\sqrt{\frac{d}{n}}, & \|\theta_{*}\|\geq\sqrt{\frac{d}{n}}
\end{cases}\label{eq: mean estimation error for joint estimation -- high dimension}
\end{equation}
if $\delta n\leq d\leq n$. We assume that $\lambda_{\theta}\geq1$,
and otherwise replace $\lambda_{\theta}$ by $1$. 

In addition, it holds from Theorem \ref{thm: estimation error of delta for mismatched delta}
that for the estimator $\hat{\delta}$ for $\delta$, which is based
on a mismatched mean $\theta_{\sharp}$, i.e.,
\[
\hat{\delta}\equiv\hat{\delta}_{\text{corr}}(X_{1}^{n};\theta_{\sharp})\dfn\frac{1}{2}\left(1-\hat{\rho}_{\text{corr}}(X_{1}^{n};\theta_{\sharp})\right),
\]
there exists a numerical constant $\lambda_{\delta}>0$, for which
with probability larger than $1-O(\frac{1}{n})$, 
\begin{equation}
\loss(\hat{\delta},\delta)\leq\lambda_{\delta}\left[\frac{\left|\|\theta_{*}\|^{2}-\|\theta_{\sharp}\|^{2}\right|}{\|\theta_{\sharp}\|^{2}}+\frac{\log(n)}{\|\theta_{\sharp}\|^{2}}\sqrt{\frac{d}{n}}\right]\label{eq: delta estimation error for joint estimation}
\end{equation}
assuming that $\|\theta_{\sharp}\|\leq2$ in order to simplify the
bound to the regime of interest. We assume here too that $\lambda_{\delta}\geq1$,
and otherwise replace $\lambda_{\delta}$ by $1$.

For the analysis we assume that all three steps of the algorithm
are successful estimation events. By the union bound, this occurs
with probability $1-O(\frac{1}{n})$.

\uline{Analysis of Step A:}

First, suppose that 
\[
\|\theta_{*}\|\leq\lambda_{\theta}\log(n)\cdot\left(\frac{d}{n}\right)^{1/4}.
\]
Then, it holds by (\ref{eq: mean estimation error for joint estimation -- low dimension})
and (\ref{eq: mean estimation error for joint estimation -- high dimension})
that 
\[
\|\hat{\theta}^{(A)}\|\leq\|\theta_{*}\|+\lambda_{\theta}\log(n)\cdot\left(\frac{\delta d}{n}\right)^{1/4}\leq2\lambda_{\theta}\log(n)\left(\frac{d}{n}\right)^{1/4}
\]
and thus the algorithm will stop and output $\hat{\theta}=0$, for
which it holds that $\loss(\theta_{*},\hat{\theta}^{(A)})=\|\theta_{*}\|$.
This agrees with (\ref{eq: high probability mean estimation error joint very very low dimension})
and (\ref{eq: high probability mean estimation error joint very low dimension}). 

Second, suppose that $\|\theta_{*}\|\geq1$. Now, it holds by (\ref{eq: mean estimation error for joint estimation -- low dimension})
and (\ref{eq: mean estimation error for joint estimation -- high dimension})
\[
\|\hat{\theta}^{(A)}\|\geq\|\theta_{*}\|-\lambda_{\theta}\log(n)\sqrt{\frac{d}{n}}\geq\frac{1}{2},
\]
by the assumption $d\leq\frac{1}{4\lambda_{\theta}^2\log^{2}(n)}\cdot n$.
Thus the algorithm will stop and output $\hat{\theta}=\hat{\theta}^{(A)}$,
for which it also holds that 
\[
\loss(\theta_{*},\hat{\theta}^{(A)})\leq\lambda_{\theta}\cdot\log(n)\cdot\sqrt{\frac{d}{n}}
\]
(irrespective of the value of $\delta$). This also agrees with (\ref{eq: high probability mean estimation error joint very very low dimension})
and (\ref{eq: high probability mean estimation error joint very low dimension}).

Thus, the algorithm will continue to Step B only if 
\[
\lambda_{\theta}\log(n)\cdot\left(\frac{d}{n}\right)^{1/4}\leq\|\theta_{*}\|\leq1
\]
which we henceforth assume. As a preparation for Step B, in which
$\hat{\theta}^{(A)}$ will play the rule of $\theta_{\sharp}$ (the
mismatched mean estimator of Section \ref{sec:Flip-probability-estimation})
in an estimation of $\delta$, we bound the absolute value $ \left|\|\hat{\theta}^{(A)}\|-\|\theta_{*}\|\right| $
and also show that $\|\hat{\theta}^{(A)}\|\geq\frac{1}{2}\|\theta_{*}\|$.
To this end, we may repeat the arguments of the proof of Theorem \ref{thm: mean estimation known delta upper bound},
and specifically, arguments similar to (\ref{eq: norm difference empirical bound})
to show that 
\begin{align}
\left|\|\hat{\theta}^{(A)}\|-\|\theta_{*}\|\right| & \lesssim\log(n)\cdot\psi\left(n,d,\delta=\frac{1}{2},k=1\right) \\
 & =\log(n)\cdot\left[2\sqrt{\frac{\delta}{n}}\cdot\|\theta_{*}\|^{2}+2\sqrt{\frac{d}{n}}\cdot\|\theta_{*}\|+13\sqrt{\frac{d}{n}}+10\frac{d}{n}\right] .
\end{align}
Note that the probability of this event is $1-O(\frac{1}{n})$, and is included
in the successful estimation event mentioned in the beginning of the
proof. Under the assumption $\|\theta_{*}\|\leq1$ it then holds
that
\begin{equation}
\left|\|\hat{\theta}^{(A)}\|-\|\theta_{*}\|\right|\leq\lambda_{\theta}\log(n)\cdot\sqrt{\frac{d}{n}} . \label{eq: norm error of Step A estimator}
\end{equation}
Note that we take $\lambda_{\theta}>0$ to be large enough so that this holds.
The assumed case $\|\theta_{*}\|\geq\lambda_{\theta}\log(n)\cdot\left(\frac{d}{n}\right)^{1/4}$,
and the assumption of the theorem $d\leq$$\frac{n}{16}$ then imply
that 
\begin{equation}
\frac{1}{2}\|\theta_{*}\|\leq\|\hat{\theta}^{(A)}\|\leq2\|\theta_{*}\|\leq2.\label{eq: scale of step A estimator}
\end{equation}

\uline{Analysis of Step B:}

Setting $\theta_{\sharp}=\hat{\theta}^{(A)}$ and utilizing the two
properties just derived in (\ref{eq: norm error of Step A estimator})
and (\ref{eq: scale of step A estimator}), (\ref{eq: delta estimation error for joint estimation})
implies that 
\begin{align}
\loss(\hat{\delta}^{(B)},\delta) & \leq\lambda_{\delta}\left[\frac{\left|\|\theta_{*}\|^{2}-\|\theta_{\sharp}\|^{2}\right|}{\|\theta_{\sharp}\|^{2}}+\frac{\log(n)}{\|\theta_{\sharp}\|^{2}}\sqrt{\frac{d}{n}}\right]\\
 & \leq4\lambda_{\delta}\lambda_{\theta}\frac{\log(n)}{\|\theta_{*}\|^{2}}\sqrt{\frac{d}{n}}.
\end{align}
There are two cases. First suppose that $\|\theta_{*}\|\geq\sqrt{8\lambda_{\delta}\lambda_{\theta}\log(n)}(\frac{d}{\delta^{2}n})^{1/4}$.
In this case, 
\[
\loss(\hat{\delta}^{(B)},\delta)\leq\frac{\delta}{2}
\]
and so 
\begin{equation}
\frac{1}{2}\delta\leq\hat{\delta}^{(B)}\leq2\delta.\label{eq: scale of step B estimator}
\end{equation}
In addition, from (\ref{eq: scale of step A estimator}) it holds
that 
\[
64\lambda_{\delta}\lambda_{\theta}\frac{\log(n)}{\|\hat{\theta}^{(A)}\|^{2}}\sqrt{\frac{d}{n}}\geq16\lambda_{\delta}\lambda_{\theta}\frac{\log(n)}{\|\theta_{*}\|^{2}}\sqrt{\frac{d}{n}}\geq2\delta\geq\hat{\delta}^{(B)}
\]
and so the algorithm will continue to Step C. Now, suppose that $\|\theta_{*}\|\leq\sqrt{8\lambda_{\delta}\lambda_{\theta}\log(n)}(\frac{d}{\delta^{2}n})^{1/4}$.
If the algorithm stops then $\hat{\theta}=\hat{\theta}^{(A)}$, and
the estimation rates of the Gaussian mixture model are achieved, which
agrees with (\ref{eq: high probability mean estimation error joint very very low dimension})
and (\ref{eq: high probability mean estimation error joint very low dimension}).
If the algorithm does not stop and proceeds to Step C then it holds
that 
\begin{align}
\hat{\delta}^{(B)} & \geq64\lambda_{\delta}\lambda_{\theta}\frac{\log(n)}{\|\hat{\theta}^{(A)}\|^{2}}\sqrt{\frac{d}{n}}
\geq16\lambda_{\delta}\lambda_{\theta}\frac{\log(n)}{\|\theta_{*}\|^{2}}\sqrt{\frac{d}{n}}
\geq\delta,
\end{align}
by (\ref{eq: scale of step A estimator}) and the assumption $\|\theta_{*}\|\leq\sqrt{8\lambda_{\delta}\lambda_{\theta}\log(n)}(\frac{d}{\delta^{2}n})^{1/4}$. 

To conclude, there are cases in which the algorithm proceeds to Step
C. If $\|\theta_{*}\|\geq\sqrt{8\lambda_{\delta}\lambda_{\theta}\log(n)}(\frac{d}{\delta^{2}n})^{1/4}$
then it proceeds Step C and (\ref{eq: scale of step B estimator})
holds. Otherwise, the algorithm might proceed to Step C, yet now only
$\hat{\delta}^{(B)}\geq\frac{1}{2}\delta$ is assured. 

\uline{Analysis of Step C:}

If the algorithm has proceeded to Step C, then it is guaranteed that
$\hat{\delta}^{(B)}\geq\frac{1}{2}\delta$ (in any event). Recall
that, ideally, had $\delta$ was known, the choice of the block length $k$
for the estimator $\hat{\theta}_{\text{cov}}(X_{2n+1}^{3n};k)$ is
$k_{*}\dfn\frac{1}{8\delta}$ (Section \ref{sec:Mean-estimation-for}).
It can be readily verified that the analysis of Section \ref{sec:Mean-estimation-for}
is valid when using any \emph{smaller} blocklength $k$, and that
the error rate improve as $k$ increases from $k=1$ to $k=k_{*}$.
In accordance to the analysis of Step B, there are two possible cases.
If $\|\theta_{*}\|\geq\sqrt{8\lambda_{\delta}\lambda_{\theta}\log(n)}(\frac{d}{\delta^{2}n})^{1/4}$
then it holds that $\frac{1}{2}\delta\leq\hat{\delta}^{(B)}\leq2\delta$.
Using $k=\frac{1}{16\hat{\delta}^{(B)}}$ then assures that $k\leq k_{*}$.
In addition, since the error bound scales linearly with $\psi(n,d,\delta,k)$
(see (\ref{eq: definition of psi})), the error can increase by a factor
at most $2$. Thus, $\hat{\theta}^{(C)}$ achieves the error rates in (\ref{eq: mean estimation error for joint estimation -- low dimension})
and (\ref{eq: mean estimation error for joint estimation -- high dimension}),
with a factor of $2$. There are again two cases to consider. If $d\geq\frac{1}{64\lambda_{\delta}^2\lambda_{\theta}^2\log^{2}(n)}\delta^{4}n$,
then the interval $(\sqrt{8\lambda_{\delta}\lambda_{\theta}\log(n)}(\frac{d}{\delta^{2}n})^{1/4},\sqrt{\delta})$
is empty (its right end point is smaller than its left end point),
and plugging $k$ in the error rates of (\ref{eq: mean estimation error for joint estimation -- low dimension})
and (\ref{eq: mean estimation error for joint estimation -- high dimension}),
the resulting error rates are as in the Gaussian mixture model. If
$d\leq\frac{1}{64\lambda_{\delta}^2\lambda_{\theta}^2\log^{2}(n)}\delta^{4}n$
then the aforementioned interval is non-empty, and the resulting error
rates agree with (\ref{eq: high probability mean estimation error joint very very low dimension})
and (\ref{eq: high probability mean estimation error joint very low dimension}).
Otherwise, if $\|\theta_{*}\|\leq\sqrt{8\lambda_{\delta}\lambda_{\theta}\log(n)}(\frac{d}{\delta^{2}n})^{1/4}$,
and the estimator $\hat{\theta}_{\text{cov}}(X_{2n+1}^{3n};k)$ operates
with $k\leq k_{*}$, but $k$ can be as low as $1$. Thus, error rates
of the Gaussian mixture are again achieved, and this agrees with (\ref{eq: high probability mean estimation error joint very very low dimension})
and (\ref{eq: high probability mean estimation error joint very low dimension}).
\end{proof}

\section{Useful results \label{sec:Useful-mathematical-tools}}

\paragraph*{Bernstein's inequality}
Let $ X_1,\cdots,X_\ell $ be independent random variables and $ |X_i|\le b $ almost surely for every $ i\in[\ell] $. 
Then \cite[Proposition 2.14]{wainwright2019high}\footnote{In the original form in \cite[Proposition 2.14]{wainwright2019high}, on the right hand side of the inequality, $ \V[X_i] $ is replaced with $ \E[X_i^2] $ which leads to a seemingly worse concentration bound. 
However, applying this form of Bernstein's inequality to $ Y_i = X_i - \E[X_i] $ allows us to conclude \eqref{eqn:bernstein}. 
} states
\begin{align}
\P\left[\left|\frac{1}{\ell}\sum_{i=1}^\ell(X_i - \E[X_i])\right|\ge\delta\right] &\le 2\exp\left(-\frac{\ell\delta^2/2}{\frac{1}{\ell}\sum_{i=1}^\ell\V[X_i] + b\delta/3}\right) . \label{eqn:bernstein}
\end{align}

\paragraph*{Norm of subGaussian random vectors}
A random vector $ X\in\mathbb{R}^d $ is said to be $ \sigma^2 $-subGaussian if $ \E[X] = 0 $ and 
\begin{align}
\E\left[\exp(t v^\top X)\right] &\le \exp\left(\frac{\sigma^2t^2}{2}\right)
\end{align}
for every $ t\in\mathbb{R} $ and every $ v\in\mathbb{S}^{d-1} $. 
Let $ X\in\mathbb{R}^d $ be a $ \sigma^2 $-subGaussian random vector. 
Then from \citep[Theorem 1.19]{rigollet2019high}, we have for any $ \delta>0 $, 
\begin{align}
\P\left[\max_{v\in\mathbb{B}^d}v^\top X\le4\sigma\sqrt{d}+2\sigma\sqrt{2\log\left(\frac{1}{\delta}\right)}\right]
=\P\left[\|X\|\le4\sigma\sqrt{d}+2\sigma\sqrt{2\log\left(\frac{1}{\delta}\right)}\right]
\ge 1 - \delta . \label{eqn:norm-subgaussian}
\end{align}

\paragraph*{Gaussian covariance estimation}

Let $ W\in\mathbb{R}^{\ell\times d} $ be a matrix with i.i.d.\ $ N(0,1) $ entries. 
Then \citep[Example 6.2]{wainwright2019high} implies that for any $ \delta>0 $, 
\begin{align}
\P\left[\left\|\frac{1}{\ell}W^\top W - I_d\right\|_{\text{op}} \le 2\left(\sqrt{\frac{d}{n}} + \delta\right) + \left(\sqrt{\frac{d}{n}} + \delta\right)^2\right] &\ge 1 - 2e^{-n\delta^2/2} . 
\end{align}

\paragraph*{Davis-Kahan's perturbation bound}

Let $ \Sigma,\hat{\Sigma} $ be symmetric matrices with the same dimensions. 
Let $ \lambda_i(\Sigma) $ and $ v_i(\Sigma) $
denote the $i$th largest eigenvalue and the associated eigenvector (of unit norm) of $ \Sigma $. 
Fix $ i $ and assume that $ \lambda_i(\Sigma) $ is well-separated from the rest of the spectrum of $ \Sigma $:
\begin{align}
\min_{j\ne i} |\lambda_i(\Sigma) - \lambda_j(\Sigma)| &= \delta > 0 . 
\end{align}
Then from \citep[Theorem 4.5.5]{vershynin2018high}, we have
\begin{align}
\loss(v_i(\Sigma),v_i(\hat{\Sigma})) &\le 4\frac{\|\Sigma - \hat{\Sigma}\|_{\text{op}}}{\delta} , \label{eqn:davis-kahan}
\end{align}
where $ \loss(\cdot,\cdot) $ was defined in \eqref{eq: loss function for means}. 

\paragraph*{Packing number}

Let $ M(\delta;\mathbb{B}^d,\|\cdot\|) $ and $ N(\delta;\mathbb{B}^d,\|\cdot\|) $ denote the $\delta$-packing number and $\delta$-covering number of $ \mathbb{B}^d $ w.r.t.\ $\|\cdot\|$, respectively. 
From \citep[Lemma 5.7 and Example 5.8]{wainwright2019high}, we have
\begin{align}
M(\delta;\mathbb{B}^d,\|\cdot\|) &\ge N(\delta;\mathbb{B}^d,\|\cdot\|)
\ge \left(\frac{1}{\delta}\right)^d . \label{eqn:packing}
\end{align}

\paragraph*{Chi-square tail bounds}

From the chi-square tail bound in \citep[Remark 2.11]{boucheron2013concentration}
\begin{equation}
\P\left[\chi_{\ell}^{2}-\ell\geq2\sqrt{\ell t}+2t\right]\leq e^{-t}\label{eq: chi-square upper tail bound}
\end{equation}
and 
\begin{equation}
\P\left[\chi_{\ell}^{2}-\ell\leq-2\sqrt{\ell t}\right]\leq e^{-t}.\label{eq: chi-square lower tail bound}
\end{equation}

\section{Numerical validation}
\label{app:numerical-validation}

In this section, we provide numerical validation of the performance of the estimators $ \hat{\theta}_{\text{cov}}(X_1^n;k) $ (cf.\ \eqref{eq: general PCA estimation rule}), $ \hat{\delta}_{\text{corr}}(X_1^n;\theta_\sharp) = \frac{1}{2}(1 - \hat{\rho}_{\text{corr}}(X_1^n;\theta_\sharp)) $ (cf.\ \eqref{eq: estimator for rho}) and Algorithm \ref{alg:Mean estimation with unknown delta} proposed and analyzed in Theorem \ref{thm: mean estimation known delta upper bound}, Theorem \ref{thm: estimation error of delta for mismatched delta} and Corollary \ref{cor: estimation error of delta for mismatched delta known mean}, and Theorem \ref{thm: Estimation error for algorithm}, respectively. 
Numerical results for these estimators/algorithm are plotted in Figures \ref{fig:plot-theta-estimation}, \ref{fig:plot-delta-estimation} and \ref{fig:plot-theta-estimation-joint}, respectively. 

\begin{figure}[htbp]
  \centering
  \includegraphics[width=0.5\textwidth]{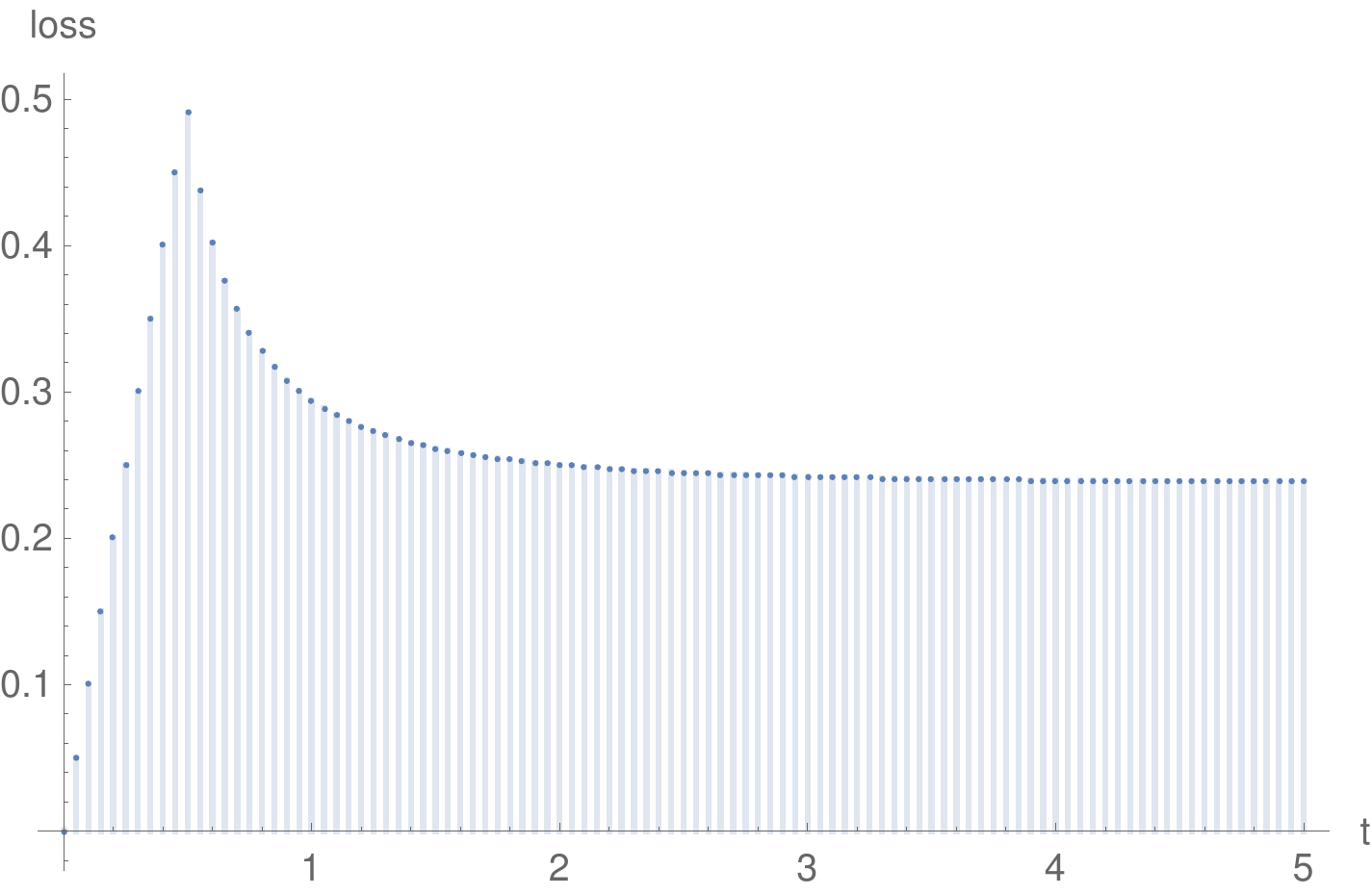}
  \caption{Numerical validation of the performance of the $ \theta_* $-estimator $ \hat{\theta}_{\text{cov}}(X_1^n;k) $ (cf.\ \eqref{eq: general PCA estimation rule}) used in the proof of Theorem \ref{thm: mean estimation known delta upper bound} which assumes a known $ \delta $. 
  We take the minimum loss (cf.\ \eqref{eq: loss function for means}) achieved by $ \hat{\theta}_{\text{cov}}\left(X_1^n;k=  \frac{1}{8\delta}\right) $ and the trivial estimator $ \hat{\theta}_0(X_1^n) = 0 $. 
  We plot the loss as a function of $ t = \|\theta_*\| $ for $ t\in[0,5] $ with step size $ 0.05 $.  
  We take $ n = 5000,d=250,\delta = 0.05 $. 
  Therefore, $ \delta>\frac{1}{n} $ and $ d< \delta n $. 
  }
  \label{fig:plot-theta-estimation}
\end{figure}

\begin{figure}[htbp]
  \centering
  \includegraphics[width=0.48\textwidth]{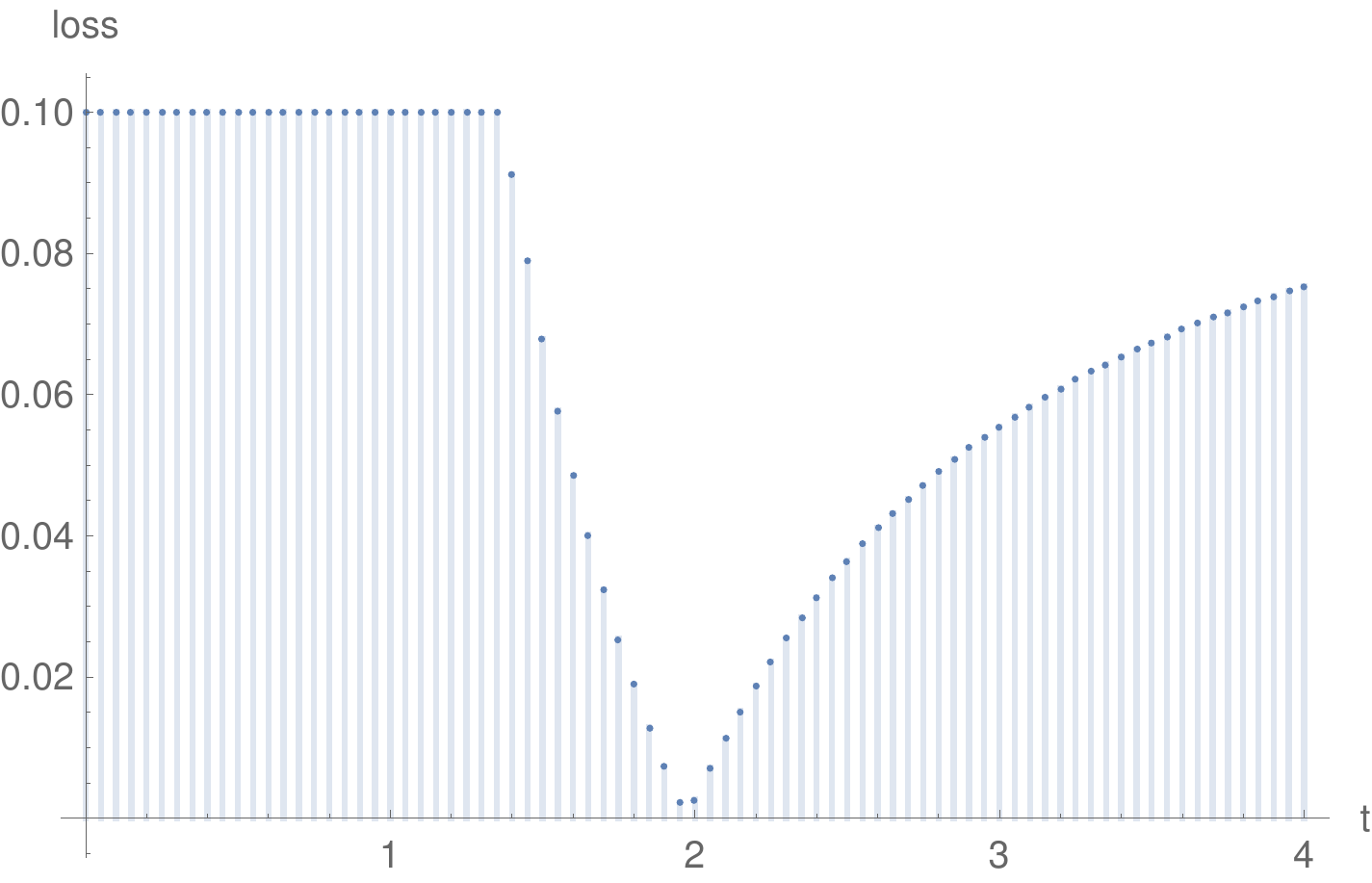} \,
  \includegraphics[width=0.48\textwidth]{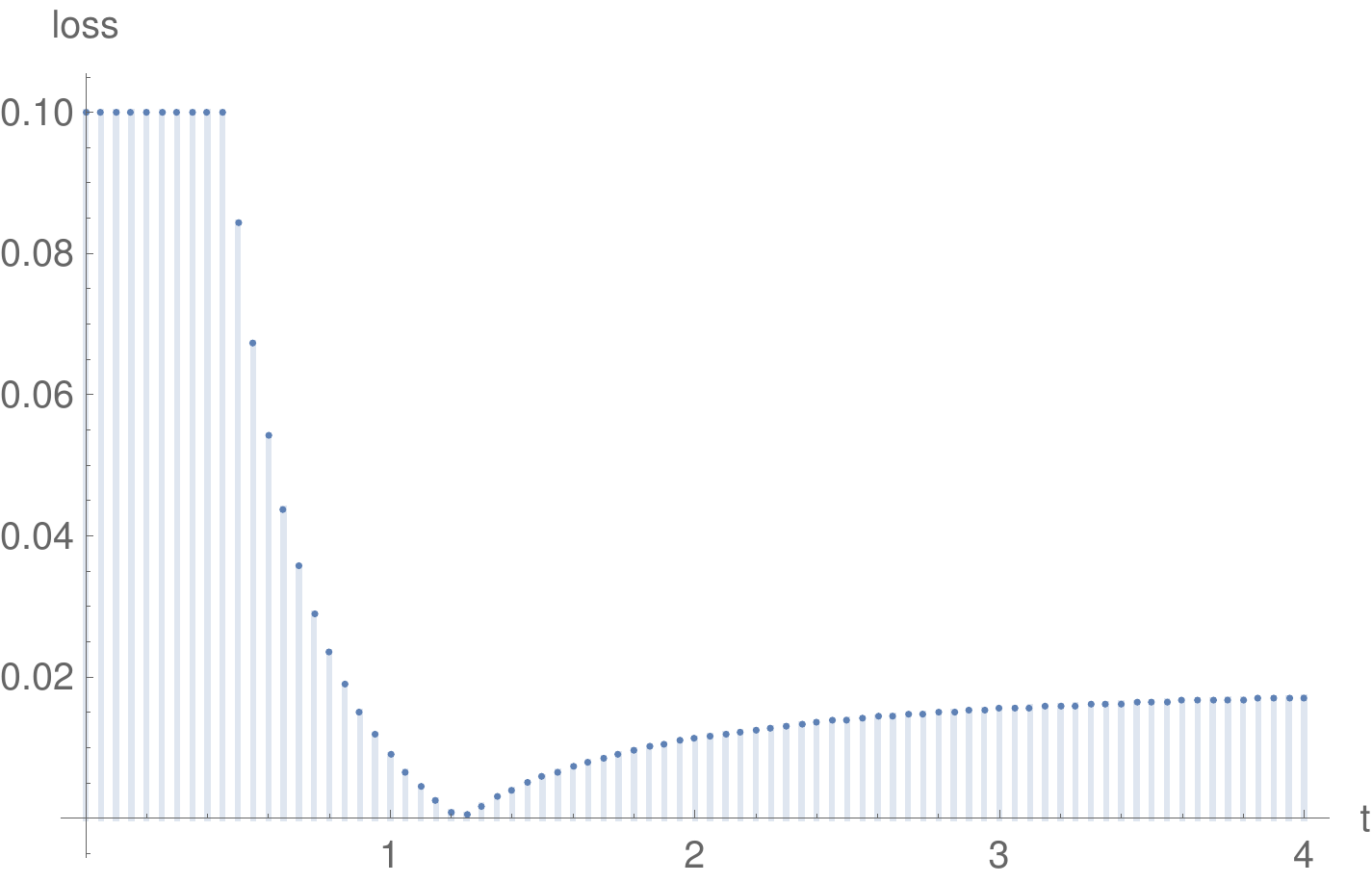}
  \caption{Numerical validation of the performance of the $ \delta $-estimator $ \hat{\delta}_{\text{corr}}(X_1^n;\theta_\sharp) = \frac{1}{2}(1 - \hat{\rho}(X_1^n;\theta_\sharp)) $ where $ \hat{\rho}_{\text{corr}}(X_1^n;\theta_\sharp) $ is given by \eqref{eq: estimator for rho}. 
  This estimator is used in Theorem \ref{thm: estimation error of delta for mismatched delta} (which assumes $ \theta_\sharp $ is a mismatched estimate of $ \theta_* $), Corollary \ref{cor: estimation error of delta for mismatched delta known mean} (which assume $ \theta_\sharp = \pm\theta_* $) and also Theorem \ref{thm: Estimation error for algorithm} (which concerns estimating $ \theta_* $ without the knowledge of $ \delta $). 
  We take the loss $ |\hat{\delta} - \delta| $ achieved by $ \hat{\delta}_{\text{corr}} $ and the trivial estimators $ \hat{\delta}_0(X_1^n) = 0,\hat{\delta}_1(X_1^n) = 1,\hat{\delta}_{1/2}(X_1^n) = \frac{1}{2} $. 
  We plot the loss as a function of $ t = \|\theta_*\| $ for $ t\in[0,1] $ with step size $ 0.05 $. 
  We take $ n = 500,d = 250,\delta = 0.1 $. 
  In the left panel, we assume the estimator has access to $ \theta_\sharp $ with $ \|\theta_\sharp\| = 1.2\cdot\|\theta_*\| $. 
  In the right panel, we assume $ \theta_\sharp = \theta_* $ and therefore the model \eqref{eq: Gaussian Markov model} is equivalent to the model \eqref{eq: model for estimating delta given known theta_star}.
  }
  \label{fig:plot-delta-estimation}
\end{figure}

\begin{figure}[htbp]
  \centering
  \includegraphics[width=0.5\textwidth]{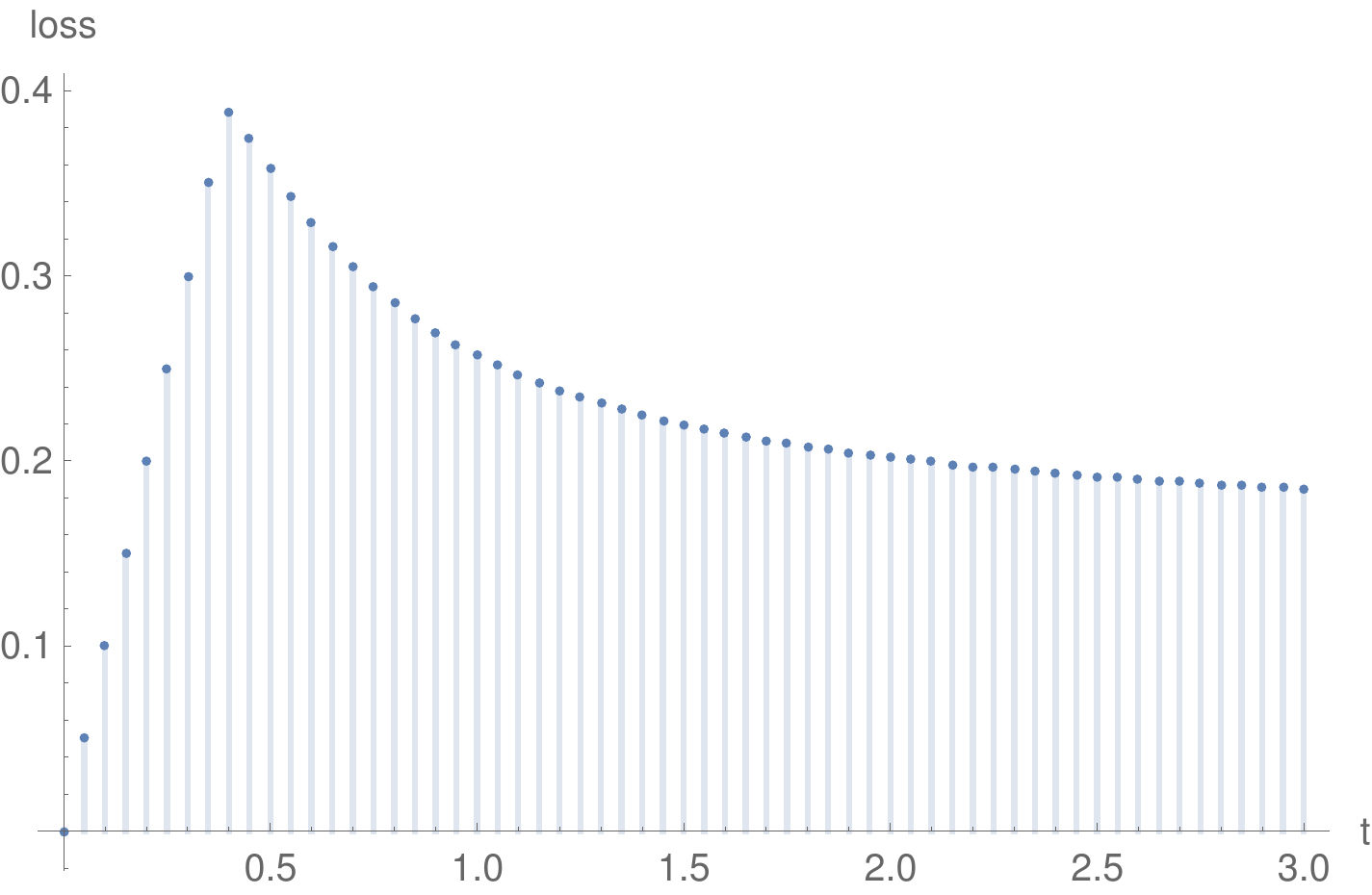}
  \caption{Numerical validation of the performance of Algorithm \ref{alg:Mean estimation with unknown delta} in Theorem \ref{thm: Estimation error for algorithm}. 
  We take the minimum loss (cf.\ \eqref{eq: loss function for means}) achieved by Algorithm \ref{alg:Mean estimation with unknown delta} and the trivial estimator $ \hat{\theta}_0(X_1^{3n}) = 0 $. 
  We plot the loss as a function of $ t = \|\theta_*\| $ for $ t\in[0,4] $ with step size $ 0.05 $. 
  We take $ n = 100,d = 5,\delta = 0.1 $. 
  }
  \label{fig:plot-theta-estimation-joint}
\end{figure}

\section{Open directions}
\label{app:open}

We discuss some open directions pertaining to Remark \ref{rk:extensions}. 
\begin{enumerate}
  \item Suppose $ Z_i\sim N(0,\Sigma) $ i.i.d.\ for some general covariance $ \Sigma\succ 0 $. 
  If $ \Sigma $ is \emph{unknown}, in contrast to the observation above, then the problem becomes significantly more delicate and challenging. A well-known and intuitive example (see, e.g., \citep{Ferguson}) shows that the maximum likelihood estimator (MLE) does not exist even for estimating the mean $\mu$ and variance $\sigma^2$ of a Gaussian mixture with two components $N(0,1)$ and $N(\mu,\sigma^2)$ where $\mu\in\mathbb{R}, \sigma\in\mathbb{R_+}$. 
  In fact, fitting both mean and scale parameters was studied in \citep{Dwivedi2019Challenges,em-overspecified} in the context of the EM algorithm, in a rather restricted setting: The distribution of the samples is standard Gaussian $N(0,I_d)$, yet the estimator is allowed to (over)fit a two-component Gaussian mixture, with symmetric means $\pm \theta$ and a covariance matrix $\sigma^2\cdot I_d$ with any $\sigma>0$. Even in this restricted setting, the result is rather delicate and there are differences, e.g., between one- and multi-dimensional models. 
  We finally remark that even method-of-moments based estimators (as the one we use in our paper) the analysis is also typically made for isotropic noise, e.g., \citep{hsu-moments,wu2020optimal}. 
  Addressing these issues in the context of Markovian model or models with more sophisticated dependence structures is left as an important yet challenging future task. 

  \item Another direction beyond Gaussian noise is to look at noise with a heavy-tailed distribution. 
  There has been some recent progress on this topic in high-dimensional statistics \citep{hopkins-heavy,hopkins-heavy-simple,hopkins-heavy-stats}.
  The estimation error rate is expected to depend on the decay rate of the tail. 
  Additional ideas and techniques will most likely be needed in order to handle heavy tails. 
  We leave it for future research. 
\end{enumerate}

\end{document}